\theoremstyle{plain}
\newtheorem{thm}{Theorem}[section]
\newtheorem{lemma}[thm]{Lemma}
\newtheorem{prop}[thm]{Proposition}
\theoremstyle{remark}
\theoremstyle{definition}
\definecolor{cyan20}{cmyk}{.2,0,0,0}
\newcommand{\tn}[1]{{\color{black} #1}}
\newcommand{\ek}[1]{{\color{black} #1}}
\newcommand{\dual}[1]{\left\langle#1\right\rangle}
\newcommand{\vnorm}[1]{|\hspace{-0.3mm}|\hspace{-0.3mm}|#1|\hspace{-0.3mm}|\hspace{-0.3mm}|}
\newcommand{\wnorm}[1]{\left|\hspace{-0.3mm}\left|\hspace{-0.3mm}\left|#1\right|\hspace{-0.3mm}\right|\hspace{-0.3mm}\right|}
\definecolor{labelkey}{RGB}{15,82,188}
\begin{document}

\title{Finite element method for radially symmetric solution of a multidimensional semilinear heat equation}
\author{
Toru Nakanishi\thanks{Graduate School of Mathematical Sciences, The University of Tokyo, Komaba 3-8-1, Meguro-ku, Tokyo 153-8914, Japan. \textit{E-mail}: \texttt{nakanish@ms.u-tokyo.ac.jp}, TEL  81-3-5465-7001, FAX  81-3-5465-7011} 
\and 
Norikazu Saito \thanks{Graduate School of Mathematical Sciences, The University of Tokyo, Komaba 3-8-1, Meguro-ku, Tokyo 153-8914, Japan. \textit{E-mail}: \texttt{norikazu@g.ecc.u-tokyo.ac.jp}, TEL  81-3-5465-7001, FAX  81-3-5465-7011}
}

\maketitle

\begin{abstract}
\ek{This study was conducted} to present error \ek{analysis of a} finite element method for computing the radially symmetric solutions of semilinear heat equations.
\ek{Particularly}, this study establishes optimal order error estimates in \ek{$L^\infty$ and weighted
$L^2$ norms, respectively,} for the symmetric and nonsymmetric formulation. Some numerical examples are presented to validate the obtained theoretical results. 
\end{abstract}

{\noindent \textbf{Key words:}
\ek{
finite element method, 
numerical analysis,
radially symmetric solution,
semilinear parabolic equation 
}
}

\bigskip

{\noindent \textbf{2010 Mathematics Subject Classification:}
65M60,
35K58,

\section{Introduction}

This study \ek{was conducted} to investigate the convergence \tn{property} of finite element method (FEM) applied to a parabolic equation with singular coefficients for the function $u=u(x,t)$, $x\in\overline{I}=[0,1]$\ek{,} and $t\ge 0$, as expressed in
\begin{subequations}
\label{eq:1}
\begin{align}
&u_{t}=u_{xx}+\frac{N-1}{x}u_{x}+f(u), 
 &&x\in I=(0,1),~ t>0, \label{eq:1a}\\
&u_x (0,t)=u(1,t)=0, &&t>0, \label{eq:1b}\\
&u(x,0)=u^0(x),    &&x\in I, \label{eq:1c}
\end{align}
\end{subequations}
where $f$ is a given locally Lipschitz continuous function, $u^0$ is a given continuous function, and 
\begin{equation}
\label{eq:N}
N\ge 2\quad \mbox{integer}
\end{equation}
is a given parameter. 

In the study of 
an $N$\ek{-}dimensional semilinear heat equation, the following problem arises \ek{as}
\begin{subequations}
\label{eq:1z}
\begin{align}
& U_{t}=\Delta U+ f(U), && \bm{x}\in\Omega,~t> 0\\
& U=0, &&\bm{x}\in\partial\Omega,~t>0,\\
& U(0,\bm{x})=U^{0}(\bm{x}), && \bm{x}\in \Omega,
\end{align}
\end{subequations}
where $\Omega$ \ek{represents} a bounded domain \tn{in} $\mathbb{R}^{N}$. 
If one is concerned with the \emph{radially symmetric solution} $u(|\bm{x}|)=U(\bm{x})$ in the $N$\ek{-}dimensional \tn{ball} $\Omega=\{\bm{x}\in\mathbb{R}^N\mid |\bm{x}|=|\bm{x}|_{\mathbb{R}^N} \tn{<} 1\}$, then \eqref{eq:1z} implies \eqref{eq:1}, where $x=|\bm{x}|$ and $u^0 (x)=U_{0}(\bm{x})$. 

For a linear case \ek{in which} $f(u)=0$ is replaced by a given function $f(x,t)$, \tn{the works} \cite{et84,tho06} studied the convergence \tn{property} of the FEM to \eqref{eq:1} along with the \tn{corresponding steady-state problem}, and \ek{two proposed} schemes: the symmetric scheme, wherein they established the optimal order error estimate in the weighted $L^2$ norm\ek{;} and the nonsymmetric scheme, wherein they proved the $L^\infty$ error estimate. \tn{In this paper}, both schemes are applied to the semilinear equation \eqref{eq:1} to derive various error estimates. Moreover, this study includes a discussion of discrete positivity conservation properties, which \ek{earlier} studies \cite{et84,tho06} failed to embrace, but \ek{which} are actually important in the study of diffusion\ek{-}type equations. 

Our \ek{emphasis} is on FEM because we are able to use non-uniform partitions of the space variable\ek{.} \ek{Therefore}, the method \ek{is deemed} useful for examining highly concentrated solutions \tn{at the origin}. On this connection, we present our motivation for this study.  
\ek{The} critical phenomenon appearing in the semilinear heat equation of the form 
\[
 U_{t}=\Delta U+ U^{1+\alpha},\quad \alpha>0
\]
in a multidimensional space has attracted considerable attention since the pioneering work of Fujita \cite{fuj66}. According to him, the equation is in the whole $N$ dimensional space\ek{. Any} positive solution blows up in a finite time if $\alpha\le 2/N$, whereas a solution is smooth at any time for a small initial value if $\alpha>2/N$. 
Therefore, \ek{expression} $p_c=1+2/N$ is known as \ek{Fujita's critical exponent} (\ek{\cite{lev90,dl00} provides some} critical exponents of other equations). Generally, similar critical exponents can be found for an initial-boundary value problem for the semilinear heat equation\ek{. Some examples are given in reports of earlier studies \cite{ish10,lev90,dl00}.} \ek{However}, 
the concrete values of those critical conditions \ek{are apparently unknown}. 
Therefore, we found it interesting to study the numerical methods for computing the solutions of nonlinear partial \ek{differential} equations in an $N$\tn{-}dimensional space. However, computing the non-stationary four-space dimensional problem \ek{is difficult,} even \ek{for} modern computers. 
\ek{We consider} the FEM to solve the one space dimensional equation \eqref{eq:1}. However, we \ek{face} another difficulty in dealing with the singular coefficient $(N-1)/x$, which the FEM reasonably simplified, as \ek{explained} later.\\
 \tn{
 As described above, the main purpose of this paper is to derive various
optimal order error estimates for the symmetric and nonsymmetric
schemes of \cite{et84,tho06} applied to \eqref{eq:1}. These schemes
are described below as (Sym) and (Non-Sym). To this end, we address
mostly the general nonlinearity $f(u)$. Moreover, we study discrete
positivity conservation properties. We summarize our typical results
here.
\begin{itemize}
 \item The solution of (Sym) is positive if $f$ and \ek{if} the discretization
parameters satisfy some conditions, as shown by Theorem \ref{prop:2}.
 \item If $f$ is a \emph{globally} Lipschitz continuous function, then the
solution of (Sym) converges to the solution of \eqref{eq:1} in the
weighted $L^2$ norm for the space and in the $L^\infty$ norm for time.
Moreover, the convergence is at the optimal order, as shown by Theorem
\ref{th:s1}.
 \item If $f$ is a \emph{locally} Lipschitz continuous function and
$N\le 3$, then the solution of (Sym) converges to the solution of
\eqref{eq:1} in the weighted $L^2$ norm for the space and in the
$L^\infty$ norm for time. The convergence is at the optimal order, as shown by Theorem \ref{th:s3}.
 \item If $f(u)=u|u|^\alpha$ with $\alpha\ge 1$ and if the time partition
is uniform,
then the solution of (Non-Sym) converges to the solution of \eqref{eq:1} in
the $L^\infty(0,T;L^\infty(I))$ norm. The convergence is at the optimal
order up to the logarithm factor, as shown by Theorem \ref{th:s6}.
\end{itemize}
However, we do not proceed to applications of our schemes to
the blow-up computation in this work. In fact, from the main
results presented in this paper, we infer that the standard schemes of
\cite{et84,tho06} do not fit for the blow-up computation for large
$N$. For the symmetric scheme, the restriction $N\le 3$ reduces
interest in considering radially symmetric problems.
Moreover, for the nonsymmetric scheme, the use of uniform
time-partitions makes it difficult to apply Nakagawa's time-partitions
control strategy: a powerful technique for computing the
approximate blow-up time, as described in earlier reports \cite{che92,nak76,sai16,cho07,cho10,sai16w,che86}.
Nevertheless, we believe that our results are of interest to
researchers in this and related fields. In fact, the validity issue of the
symmetric scheme only for $N\le 3$ was pointed out earlier in
\cite{akr03} for a nonlinear Schr{\" o}dinger equation with no
mathematical evidence. The analysis reported herein reveals weak
points of the two standard schemes.
As a sequel to this study, we propose a new
finite element scheme for \eqref{eq:1}. The scheme, which uses a nonstandard
mass-lumping approximation, is shown to be positivity-preserving
and convergent for any $N\ge 2$. Details will be reported in a
forthcoming paper.
}\\
 \ek{It is noteworthy that} the finite difference method for \eqref{eq:1} has been studied and \ek{that} its optimal order convergence \ek{was} proved in \ek{an earlier report \cite{che92}.} \ek{Its} finite difference scheme uses a special approximation around the origin to \ek{assume} a uniform spatial mesh.

This paper comprises \tn{five} sections. Section \ref{sec:2} presents our finite element schemes. 
Well-posedness and positivity conservation are examined in Section \ref{sec:3}. 
Section \ref{sec:4} presents the error estimates and their proofs.  
Finally, Section \ref{sec:5} \ek{presents} some numerical examples that validate our theoretical results.

\section{Finite element method}
\label{sec:2}

First, we derive two alternate weak formulations of \eqref{eq:1}. Unless otherwise stated explicitly, we assume that $f$ is a locally Lipschitz continuous function \ek{such} that
\begin{equation}
\tag{f1}
\label{eq:f1}
\forall \mu>0, \ \exists M_\mu>0:\ 
|f(s)-f(s')|\le M_\mu|s-s'|\quad (s,s'\in\mathbb{R},|s|,|s'|\le \mu).
\end{equation}

\ek{Letting} $\chi\in \dot{H}^{1}=\{v\in H^{1}(I)\mid v(1)=0\}$ be arbitrary, then multiplying both sides of \eqref{eq:1a} by $x^{N-1}\chi$ and using integration by parts over $I$, we obtain 
\begin{equation}
 \label{eq:w1}
 \int_I x^{N-1} u_t\chi~dx+
 \int_I x^{N-1} u_{x}\chi_{x}~dx=
 \int_I x^{N-1} f(u)\chi~dx.
\end{equation}
Otherwise, if we multiply both sides of \eqref{eq:1a} by $x\chi$ instead of $x^{N-1}\chi$ and integrate it over $I$, then we have  
\begin{equation}
 \label{eq:w2}
 \int_I x u_t\chi~dx+
 \int_I [x u_{x}\chi_{x}+(2-N)u_{x}\chi]~dx=
 \int_I x f(u)\chi~dx.
\end{equation}
We \ek{designate} \eqref{eq:w1} the \emph{symmetric} weak form \ek{because of} the symmetric bilinear form associated with the differential operator $u_{xx}+\frac{N-1}{x}u_{x}$. 
\ek{In contrast}, \eqref{eq:w2} is the \emph{nonsymmetric} weak form. Both forms are \tn{identical} at $N=2$.


We \ek{now} establish the finite element schemes based on these identities. 
For a positive integer $m$, we introduce node points 
\[
 0=x_0<x_1<\cdots<x_{j-1}<x_{j}<\cdots<x_{m-1}<x_m=1,
\]
and set $I_{j}=(x_{j-1},x_{j})$ and $h_j=x_j-x_{j-1}$, where $j=1,\ldots,m$. The granularity parameter is defined as $h=\max_{1\le j\le m}h_j$. 
Let $\mathcal{P}_k(J)$ be the set of all polynomials in an interval $J$ of degree $\le k$.
We define the $\mathrm{P}1$ finite element space \ek{as} 
\begin{equation}
 \label{eq:2}
S_{h}=\{ v \in H^{1}(I) \mid v\in\mathcal{P}_1(I_j)~(j = 1,\cdots,m),\  v(1)=0\}.
\end{equation}
\ek{Its} standard basis function $\phi_{j}$, $j=0,1,\cdots,m\tn{,}$ is defined as 

\[
 \phi_{j}(x_{i})=\delta_{ij},
\]
where $\delta_{ij}$ denotes \ek{Kronecker's} delta.

For \ek{time} discretization, we \tn{introduce} \ek{non-uniform partitions} 
\[
 t_0=0,\quad t_{n} = \sum_{j=0}^{n-1}\tau_{j} \quad (n\ge 1),
\]
where $\tau_j>0$ denotes the time increments. 

Generally, we write $\partial_{\tau_n}u_h^{n+1}=(u_h^{n+1}-u_h^n)/\tau_n$.\\ 
\tn{
We are now in a position to state the finite element schemes to be considered.
}

\smallskip

\noindent \textbf{(Sym)} Find $u_{h}^{n+1} \in S_{h}$, $n=0,1,\ldots$, such that  

\begin{equation}
\label{eq:3}
\left(\partial_{\tau_n}u_h^{n+1},\chi\right) + A(u_{h}^{n+1},\chi)=(f(u_{h}^{n}),\chi)\quad 
(\chi \in S_{h},~n=0,1,\ldots),
\end{equation} 

where $u_h^0\in S_h$ is assumed to be given. Hereinafter, we set 
\begin{subequations} 
 \label{eq:4}
\begin{align}
 (w,v)&=\int_I x^{N-1} wv~dx, & \|w\|^2=(w,w)=\int_I x^{N-1} w^2~dx, \label{eq:4a}\\
A(w,v)&=\int_I x^{N-1}w_{x}v_{x}~dx. \label{eq:4b}
\end{align} 
\end{subequations}

\medskip


\noindent \textbf{(Non-Sym)} Find $u_{h}^{n+1} \in S_{h}$, $n=0,1,\ldots,$ such that 
\begin{equation}
\label{eq:8}
\dual{\partial_{\tau_n}u_h^{n+1},\chi}+B(u_{h}^{n+1}, \chi)=
\dual{f(u_{h}^{n}),\chi} \quad (\chi\in S_h,~n=0,1,\ldots),
\end{equation}
where 
\begin{subequations} 
 \label{eq:9}
\begin{align}
\dual{w,v}&=\int_Ixwv~dx, \qquad  \vnorm{w}^2=\dual{w,w}=\int_Ixw^2~dx,\label{eq:9a}\\
B(w,v)&=\int_I xw_{x}v_{x}~dx+(2-N)\int_I w_{x}v~dx.\label{eq:9b}
\end{align}
\end{subequations}
 
It is noteworthy that $B(\cdot,\cdot)$ is coercive in $\dot{H}^1$ such that  
\begin{equation}
B(w,w)=\dual{w_{x},w_{x}}+(2-N)\int_I w_{x}wdx=\vnorm{w_{x}}^2+\frac{N-2}{2} w(0)^{2}\ge\vnorm{w_{x}}^2. \label{eq:bc2}
\end{equation}

\section{Well-posedness and positivity conservation}
\label{sec:3}

In this section, we \ek{prove} the following theorems. 

\begin{thm}[Well-posedness of \textup{(Sym)}]
\label{prop:1}
For a given $u_h^n\in S_h$ with $n\ge 0$, the scheme \textup{(Sym)} admits a unique solution $u_h^{n+1}\in S_h$. 
\end{thm}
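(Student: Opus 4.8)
The plan is to recognize \textup{(Sym)} as a linear problem posed on the finite-dimensional space $S_h$ and then to invoke coercivity together with elementary linear algebra. Fixing the datum $u_h^n\in S_h$ and writing $w=u_h^{n+1}$, I would first move every term containing the unknown $w$ to the left-hand side, rewriting \eqref{eq:3} as: find $w\in S_h$ such that
\[
 a(w,\chi):=\tfrac{1}{\tau_n}(w,\chi)+A(w,\chi)=\tfrac{1}{\tau_n}(u_h^n,\chi)+(f(u_h^n),\chi)=:F(\chi)
 \qquad(\chi\in S_h).
\]
Here $a(\cdot,\cdot)$ is a bilinear form on $S_h\times S_h$ and $F$ is a linear functional on $S_h$; both are well defined because $u_h^n\in S_h\subset C(\overline{I})$ and $f$ is continuous by \eqref{eq:f1}, so $F(\chi)$ is finite for every $\chi$.

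Next I would check that $a$ is symmetric and positive definite on $S_h$. Symmetry is immediate from the symmetry of $(\cdot,\cdot)$ and of $A(\cdot,\cdot)$ in \eqref{eq:4}. For definiteness, observe that
\[
 a(w,w)=\tfrac{1}{\tau_n}\|w\|^2+A(w,w)\ge \tfrac{1}{\tau_n}\|w\|^2\ge 0,
\]
since $A(w,w)=\int_I x^{N-1}w_x^2\,dx\ge 0$ and $\tau_n>0$. The only substantive point is that $\|\cdot\|$ is a genuine norm on $S_h$, i.e.\ that $a(w,w)=0$ forces $w=0$; this is where the degeneracy of the weight $x^{N-1}$ at $x=0$ must be handled. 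But $a(w,w)=0$ yields $\|w\|^2=\int_0^1 x^{N-1}w^2\,dx=0$, and since $x^{N-1}>0$ on $(0,1)$ and $w$ is continuous, $w$ must vanish on $(0,1)$, hence $w\equiv 0$ on $[0,1]$. Thus $a$ is coercive on $S_h$.

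Finally, because $S_h$ is finite-dimensional, the conclusion follows immediately: with respect to the basis $\{\phi_j\}_{j=0}^{m-1}$, the form $a$ is represented by a symmetric positive-definite (hence invertible) matrix, so the system $a(w,\chi)=F(\chi)$ has a unique solution $w=u_h^{n+1}\in S_h$. Equivalently, the Lax--Milgram lemma applies on the finite-dimensional Hilbert space $\bigl(S_h,(\cdot,\cdot)\bigr)$. I expect no serious obstacle in this argument: the single point requiring care is confirming that the weighted norm $\|\cdot\|$ does not degenerate on $S_h$ despite the weight vanishing at the origin, and this is settled by the continuity of elements of $S_h$ as just indicated.
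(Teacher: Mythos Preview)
Your argument is correct and is essentially the paper's own proof: the paper rewrites \textup{(Sym)} in matrix form and invokes Lemma~\ref{la:1}, which asserts that $\mathcal{M}$ and $\mathcal{A}$ are positive definite, so that $\mathcal{M}+\tau_n\mathcal{A}$ is invertible. Your coercivity check for $a(\cdot,\cdot)$ on $S_h$ is exactly the positive-definiteness of $\mathcal{M}+\tau_n\mathcal{A}$, and your care with the degenerate weight is precisely what underlies the positive-definiteness of $\mathcal{M}$.
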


\begin{thm}[Positivity of (Sym)]
\label{prop:2}
In addition to the basic assumption \eqref{eq:f1}, assume that 
\begin{equation}
 \tag{f2}
\label{eq:f2}
\mbox{$f$ is a non-decreasing function with $f(0)\ge 0$}.
\end{equation}
\ek{Letting} $n\ge 0$ and $u_{h}^n\ge 0$, and \ek{assuming} that 
\begin{equation}
 \label{eq:tau1}
\tau_n\ge \frac{1}{4}h^2\ek{,}
\end{equation}   
\ek{then}, the solution $u_h^{n+1}$ of \textup{(Sym)} satisfies $u_h^{n+1}\ge 0$. 
\end{thm}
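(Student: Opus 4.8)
The plan is to reduce \textup{(Sym)} to a single linear algebraic system and to show that its coefficient matrix is inverse-positive while the right-hand side is nonnegative. Expanding $u_h^{n+1}=\sum_{j=0}^{m-1}U_j^{n+1}\phi_j$ and testing \eqref{eq:3} against $\chi=\phi_i$ ($i=0,\ldots,m-1$), I would write the scheme as $LU^{n+1}=G^n$, where $L=\tau_n^{-1}M+K$ with mass matrix $M_{ij}=(\phi_j,\phi_i)$ and stiffness matrix $K_{ij}=A(\phi_j,\phi_i)$, and $G^n=\tau_n^{-1}MU^n+F^n$ with $F^n_i=(f(u_h^n),\phi_i)$. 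Since $u_h^n\ge 0$ is a $\mathrm{P}1$ function, its nodal vector satisfies $U^n\ge 0$ componentwise; assumption \eqref{eq:f2} gives $f(u_h^n(x))\ge f(0)\ge 0$ pointwise, and together with $\phi_i\ge 0$ and the entrywise nonnegativity of $M$ (each entry is a weighted integral of nonnegative basis products), this yields $G^n\ge 0$. It then suffices to prove $L^{-1}\ge 0$, because $U^{n+1}=L^{-1}G^n\ge 0$ is precisely the assertion $u_h^{n+1}\ge 0$.

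To establish that $L$ is a nonsingular M-matrix I would verify three properties. The diagonal entries $L_{ii}=\tau_n^{-1}M_{ii}+K_{ii}>0$ are clearly positive. The crucial point, where \eqref{eq:tau1} enters, is the nonpositivity of the off-diagonal entries: for adjacent nodes the competition is between the positive mass contribution $M_{j-1,j}=h_j^{-2}\int_{I_j}x^{N-1}(x_j-x)(x-x_{j-1})\,dx$ and the negative stiffness contribution $K_{j-1,j}=-h_j^{-2}\int_{I_j}x^{N-1}\,dx$, so that $L_{j-1,j}\le 0$ is equivalent to $\tau_n^{-1}\int_{I_j}x^{N-1}(x_j-x)(x-x_{j-1})\,dx\le\int_{I_j}x^{N-1}\,dx$. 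This follows from the elementary bound $(x_j-x)(x-x_{j-1})\le h_j^2/4\le h^2/4$ exactly under the hypothesis $\tau_n\ge h^2/4$, while the non-adjacent off-diagonals vanish.

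Finally I would establish strict diagonal dominance by row. Using $\sum_{j=0}^{m}\phi_j\equiv 1$ together with $A(\phi_i,1)=0$, the interior rows $i\le m-2$ have the exact row sum $\sum_{j=0}^{m-1}L_{ij}=\tau_n^{-1}(\phi_i,1)>0$, since $\phi_i$ and the eliminated boundary function $\phi_m$ have disjoint interiors. For the last row $i=m-1$ the same bookkeeping leaves an extra term $-L_{m-1,m}\ge 0$ from the coupling to the Dirichlet node, so that $\sum_{j=0}^{m-1}L_{m-1,j}\ge\tau_n^{-1}(\phi_{m-1},1)>0$. Combined with the nonpositive off-diagonals this gives $L_{ii}>\sum_{j\ne i}|L_{ij}|$ in every row; a matrix with positive diagonal, nonpositive off-diagonals and strict row diagonal dominance is a nonsingular M-matrix, hence $L^{-1}\ge 0$, completing the argument.

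The main obstacle is the off-diagonal sign condition. Unlike the pure stiffness matrix, the consistent mass matrix carries strictly positive off-diagonal entries, so positivity can genuinely fail for small $\tau_n$; the quantitative threshold $\tau_n\ge h^2/4$ is exactly what forces the mass term to lose to the stiffness term on each subinterval. The weight $x^{N-1}$ appears in both competing integrals, but since the pointwise estimate $(x_j-x)(x-x_{j-1})\le h_j^2/4$ factors out before the weighted integration, the weight cancels and the threshold is rendered independent of $N$.
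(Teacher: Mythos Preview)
Your proposal is correct and follows essentially the same strategy as the paper: both reduce to showing that the system matrix is inverse-positive via nonpositive off-diagonals (obtained from the pointwise bound $(x_j-x)(x-x_{j-1})\le h_j^2/4$ under \eqref{eq:tau1}) together with positive row sums (obtained from $\sum_j\phi_j\equiv 1$), while the right-hand side is nonnegative by \eqref{eq:f2}. The only cosmetic difference is that you invoke the M-matrix/strict diagonal dominance criterion directly, whereas the paper spells out the equivalent Neumann-series argument via $\mathcal{C}=\mathcal{D}(\mathcal{I}-\mathcal{E})$ with $\|\mathcal{E}\|_\infty<1$.
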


\begin{thm}[Comparison principle for (Sym)]
\label{prop:3}
\ek{We let} $n\ge 0$ and assume that $u_{h}^{n},\tilde{u}_h^n\in S_{h}$ satisfies $u_h^n\le \tilde{u}_{h}^{n}$ in $I$. Furthermore, we assume that \eqref{eq:f1} and \eqref{eq:f2} are satisfied. 
\ek{Similarly, we let} $u_{h}^{n+1},\tilde{u}_h^{n+1}\in S_{h}$ be the solutions of \textup{(Sym)} with $u_{h}^{n},\tilde{u}_h^n$, respectively, using the same time increment $\tau_n$. 
Moreover, \ek{we} assume that \eqref{eq:tau1} is satisfied. \ek{Consequently}, we obtain $u_{h}^{n+1}\le \tilde{u}_{h}^{n+1}$ in $I$\ek{. The equality} holds true if and only if $u_h^n=\tilde{u}_h^n$ in $I$.   
\end{thm}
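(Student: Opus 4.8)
The plan is to deduce the comparison principle from the linear monotone (M-matrix) structure already contained in the proof of Theorem~\ref{prop:2}, applied to the difference. Set $w_h^{n+1}:=\tilde u_h^{n+1}-u_h^{n+1}\in S_h$ and $w_h^n:=\tilde u_h^n-u_h^n$, so that $w_h^n\ge 0$ in $I$ by hypothesis. Subtracting the two instances of \eqref{eq:3}, written with the common increment $\tau_n$, and rearranging, one finds that $w_h^{n+1}$ solves the linear variational problem
\[
\frac{1}{\tau_n}(w_h^{n+1},\chi)+A(w_h^{n+1},\chi)=\frac{1}{\tau_n}(w_h^n,\chi)+(f(\tilde u_h^n)-f(u_h^n),\chi)\qquad(\chi\in S_h).
\]
The essential point is that, although $w_h^{n+1}$ does not itself solve the nonlinear scheme \textup{(Sym)} with data $w_h^n$, the right-hand side above is nonnegative in the appropriate discrete sense, which is exactly the input needed for a monotone linear solve.

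Next I would pass to the matrix form. Let $\{\phi_0,\dots,\phi_{m-1}\}$ be the nodal basis of $S_h$, and write $\mathbf{M}_{ij}=(\phi_j,\phi_i)$, $\mathbf{S}_{ij}=A(\phi_j,\phi_i)$, and $\mathbf{K}=\tau_n^{-1}\mathbf{M}+\mathbf{S}$. With $\mathbf{W},\mathbf{W}^n$ the coefficient vectors of $w_h^{n+1},w_h^n$ and $\mathbf{G}_i=(f(\tilde u_h^n)-f(u_h^n),\phi_i)$, the identity reads $\mathbf{K}\mathbf{W}=\tau_n^{-1}\mathbf{M}\mathbf{W}^n+\mathbf{G}$. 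Three sign facts then enter: (i) $\mathbf{W}^n\ge \mathbf{0}$ entrywise, because a $\mathrm{P}1$ function is nonnegative on $I$ if and only if its nodal values are nonnegative; (ii) $\mathbf{M}\ge \mathbf{0}$ entrywise, since $\mathbf{M}_{ij}=\int_I x^{N-1}\phi_i\phi_j\,dx\ge 0$, so $\mathbf{M}\mathbf{W}^n\ge\mathbf{0}$; and (iii) $\mathbf{G}\ge\mathbf{0}$, because \eqref{eq:f2} together with $u_h^n\le\tilde u_h^n$ gives $f(\tilde u_h^n)-f(u_h^n)\ge 0$ pointwise, whence each $\mathbf{G}_i$ is the integral of a nonnegative integrand. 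Invoking the M-matrix property of $\mathbf{K}$ furnished by the proof of Theorem~\ref{prop:2} under \eqref{eq:tau1}, that is $\mathbf{K}^{-1}\ge\mathbf{0}$, we obtain $\mathbf{W}=\mathbf{K}^{-1}(\tau_n^{-1}\mathbf{M}\mathbf{W}^n+\mathbf{G})\ge\mathbf{0}$, i.e. $w_h^{n+1}\ge 0$ and hence $u_h^{n+1}\le\tilde u_h^{n+1}$ in $I$.

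For the equality statement I would argue at the level of $\mathbf{W}=\mathbf{0}$. If $u_h^n=\tilde u_h^n$ then $\mathbf{W}^n=\mathbf{0}$ and $\mathbf{G}=\mathbf{0}$, so $\mathbf{W}=\mathbf{0}$ by the invertibility of $\mathbf{K}$ (Theorem~\ref{prop:1}), giving $u_h^{n+1}=\tilde u_h^{n+1}$. Conversely, if $w_h^{n+1}\equiv 0$ then $\tau_n^{-1}\mathbf{M}\mathbf{W}^n+\mathbf{G}=\mathbf{0}$; since both summands are entrywise nonnegative, each must vanish, and $\mathbf{M}\mathbf{W}^n=\mathbf{0}$ with $\mathbf{M}$ positive definite forces $\mathbf{W}^n=\mathbf{0}$, i.e. $u_h^n=\tilde u_h^n$. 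This yields the claimed equivalence.

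The main obstacle, and the place where care is needed, is precisely that $w_h^{n+1}$ satisfies a \emph{linear} problem with load $\mathbf{G}$ rather than the nonlinear scheme, so Theorem~\ref{prop:2} cannot be cited verbatim; one must instead isolate its monotonicity content, namely $\mathbf{K}^{-1}\ge\mathbf{0}$ under \eqref{eq:tau1}, and verify the sign $\mathbf{G}\ge\mathbf{0}$ using the monotonicity \eqref{eq:f2}. The remaining verifications, namely entrywise nonnegativity and positive definiteness of $\mathbf{M}$ and the nodal characterization of pointwise nonnegativity of $\mathrm{P}1$ functions, are routine.
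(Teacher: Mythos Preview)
Your proposal is correct and follows essentially the same approach as the paper: the paper simply notes that $f(\tilde u_h^n)-f(u_h^n)\ge 0$ by \eqref{eq:f2} and then says the argument proceeds exactly as in the proof of Theorem~\ref{prop:2}, which is precisely your reduction to the inverse-positivity of $\mathcal{C}=\mathcal{M}+\tau_n\mathcal{A}$ applied to the difference with a nonnegative right-hand side. Your treatment is more explicit, in particular you supply the equality characterization (via positive definiteness of $\mathcal{M}$) that the paper asserts but does not write out.
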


\begin{thm}[Well-posedness of \textup{(Non-Sym)}]
\label{prop:11}
For a given $u_h^n\in S_h$ with $n\ge 0$, the scheme \textup{(Non-Sym)} admits a unique solution $u_h^{n+1}\in S_h$.  
\end{thm}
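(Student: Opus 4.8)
The plan is to recast \eqref{eq:8} as a square linear system on the finite-dimensional space $S_h$ and to prove unique solvability by showing that the associated bilinear form is positive definite, exploiting the coercivity \eqref{eq:bc2} already recorded for $B$. With $u_h^n\in S_h$ fixed, I would rewrite \eqref{eq:8} as: find $u_h^{n+1}\in S_h$ such that $a(u_h^{n+1},\chi)=L(\chi)$ for all $\chi\in S_h$, where
$a(w,\chi):=\tau_n^{-1}\dual{w,\chi}+B(w,\chi)$ and $L(\chi):=\dual{f(u_h^n),\chi}+\tau_n^{-1}\dual{u_h^n,\chi}$. Here $L$ is a fixed bounded linear functional on $S_h$: since $u_h^n$ is continuous on the compact interval $[0,1]$, the function $f(u_h^n)$ is bounded and continuous by \eqref{eq:f1}, so $\dual{f(u_h^n),\chi}=\int_I x\,f(u_h^n)\chi\,dx$ is well defined. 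Because $\dim S_h<\infty$, expressing $a(\cdot,\cdot)$ and $L$ in the nodal basis $\{\phi_j\}$ turns this into a square linear system for the coefficients of $u_h^{n+1}$; hence existence and uniqueness are equivalent, and both follow once the coefficient matrix is shown to be nonsingular. Equivalently, it suffices to prove that $a(w,\chi)=0$ for all $\chi\in S_h$ forces $w=0$.

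Next I would test the homogeneous equation with $\chi=w$ itself. By \eqref{eq:bc2},
$B(w,w)=\vnorm{w_x}^2+\tfrac{N-2}{2}w(0)^2\ge\vnorm{w_x}^2\ge 0$, where nonnegativity of the boundary term uses $N\ge 2$. Consequently $a(w,w)=\tau_n^{-1}\vnorm{w}^2+B(w,w)\ge \tau_n^{-1}\vnorm{w}^2$. Since $\tau_n>0$, the vanishing $a(w,w)=0$ forces $\vnorm{w}=0$. Because $\vnorm{\cdot}$ is a genuine norm on $S_h$ — for a continuous piecewise-linear $w$, the relation $\int_I x w^2\,dx=0$ gives $w\equiv 0$ on $(0,1]$ and then $w(0)=0$ by continuity — we conclude $w=0$. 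This establishes the required injectivity and thereby the nonsingularity of the system, completing the proof.

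The only point requiring care is that $B$, and hence $a$, is nonsymmetric owing to the convective term $(2-N)\int_I w_xv\,dx$, so the Riesz representation theorem does not apply directly. This causes no genuine difficulty, however: upon testing with $w$ and integrating by parts (using $w(1)=0$), that term reduces to the \emph{nonnegative} boundary contribution $\tfrac{N-2}{2}w(0)^2$ recorded in \eqref{eq:bc2}, so the antisymmetric part does not degrade positive definiteness, and invertibility of the finite system is immediate. One could alternatively invoke the Lax–Milgram theorem, but in the present finite-dimensional setting the injectivity argument above is more elementary and self-contained.
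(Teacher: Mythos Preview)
Your proof is correct and follows essentially the same approach as the paper: both reduce \eqref{eq:8} to a finite-dimensional linear system and invoke the coercivity \eqref{eq:bc2} to obtain positive definiteness, hence invertibility. The paper phrases this in matrix terms (noting that $\mathcal{M}'$ and $\mathcal{B}$ are positive-definite, so $\mathcal{M}'+\tau_n\mathcal{B}$ is as well), whereas you argue directly at the level of the bilinear form $a(\cdot,\cdot)$; your version is more detailed---in particular, you carefully verify that $\vnorm{\cdot}$ is a genuine norm on $S_h$ despite the degenerate weight---but the underlying idea is the same.
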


To prove these theorems, we conveniently rewrite \eqref{eq:3} into a matrix form.  
That is, we introduce
\begin{align*}
&\mathcal{M}=(\mu_{i,j})_{0\le i,j\le m-1}\in\mathbb{R}^{m\times m}, 
&& \mu_{i,j}=(\phi_{j},\phi_{i}),\\
&\mathcal{A}=(a_{i,j})_{0\le i,j\le m-1}\in\mathbb{R}^{m\times m} ,
&& a_{i,j}=A(\phi_{j},\phi_{i}),\\
&\bm{u}^{n}=(u_{j}^{n})_{0\le j\le m-1}\in\mathbb{R}^{m}, && u_j^n=u_{h}^{n}(x_{j}),\\
&\bm{F}^{n}=(F_{j}^{n})_{0\le j\le m-1}\in\mathbb{R}^{m}, && F_j^n=(f(u_{h}^{n}),\phi_{j}),
\end{align*}
and express \eqref{eq:3} as 
\begin{equation}
 \label{eq:3m}
(\mathcal{M}+\tau_{n}\mathcal{A})\bm{u}^{n+1}=\mathcal{M}\bm{u}^{n}+\tau_{n}\bm{F}^{n}\quad (n=0,1,\ldots),
\end{equation}
where $u_m^n=u_h^n(x_m)$ is understood as $u_m^n=0$.

\begin{lemma}
\label{la:1}
$\mathcal{M}$ and $\mathcal{A}$ are both tri-diagonal and positive-definite matrices.  
\end{lemma}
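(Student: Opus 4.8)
The plan is to verify the two claimed properties of $\mathcal{M}$ and $\mathcal{A}$ separately and by elementary, direct computation. Both matrices are Gram-type matrices built from the P1 basis functions $\phi_j$, which have the crucial \emph{local support} property that $\phi_j$ vanishes outside $\overline{I_j}\cup\overline{I_{j+1}}$. This immediately gives the tridiagonal structure: the entry $\mu_{i,j}=(\phi_j,\phi_i)=\int_I x^{N-1}\phi_j\phi_i\,dx$ (and likewise $a_{i,j}=A(\phi_j,\phi_i)$) can be nonzero only when the supports of $\phi_i$ and $\phi_j$ overlap on a set of positive measure, i.e. only when $|i-j|\le 1$. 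So first I would record that $\mu_{i,j}=a_{i,j}=0$ whenever $|i-j|\ge 2$, which establishes tridiagonality with essentially one line of reasoning.

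Next I would treat positive-definiteness. The clean way is to observe that for any vector $\bm{c}=(c_0,\dots,c_{m-1})\in\mathbb{R}^m$, setting $w=\sum_{j=0}^{m-1}c_j\phi_j\in S_h$ gives, by bilinearity,
\begin{equation*}
\bm{c}^\top\mathcal{M}\bm{c}=\sum_{i,j}c_i c_j(\phi_j,\phi_i)=(w,w)=\|w\|^2\ge 0,
\end{equation*}
and similarly $\bm{c}^\top\mathcal{A}\bm{c}=A(w,w)=\int_I x^{N-1}w_x^2\,dx\ge 0$, using the symmetry of the forms $(\cdot,\cdot)$ and $A(\cdot,\cdot)$ defined in \eqref{eq:4}. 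Both matrices are manifestly symmetric. Thus the quadratic forms are nonnegative, and it remains only to rule out that they vanish for $\bm{c}\neq\bm{0}$.

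For $\mathcal{M}$ this is immediate: $\|w\|^2=0$ forces $w\equiv 0$ on $I$ (the weight $x^{N-1}$ is positive a.e. on $I$), and since $\{\phi_0,\dots,\phi_{m-1}\}$ is a basis of $S_h$, linear independence gives $\bm{c}=\bm{0}$. For $\mathcal{A}$ the argument needs one extra step, which is the only genuinely non-routine point: $A(w,w)=0$ forces $w_x=0$ a.e., so $w$ is constant on $I$; but every $w\in S_h$ satisfies the boundary condition $w(1)=0$ built into the definition \eqref{eq:2} of $S_h$, so the constant must be zero, whence $w\equiv 0$ and again $\bm{c}=\bm{0}$. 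I would stress this boundary condition, since without the constraint $v(1)=0$ the stiffness matrix would only be positive-semidefinite. The main (mild) obstacle is therefore not any computation but making sure the Dirichlet condition at $x=1$ is invoked to upgrade semidefiniteness to definiteness for $\mathcal{A}$; everything else follows from local support and the positivity of the weight.
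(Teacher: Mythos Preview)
Your argument is correct and complete. The paper itself does not supply a proof of this lemma; it is stated without justification, and the authors immediately pass to its consequence (Theorem~\ref{prop:1}). Your proof is exactly the standard one: local support of the hat functions gives tridiagonality, and the Gram-matrix identities $\bm{c}^\top\mathcal{M}\bm{c}=\|w\|^2$ and $\bm{c}^\top\mathcal{A}\bm{c}=A(w,w)$ for $w=\sum_j c_j\phi_j$ give positive-semidefiniteness, upgraded to definiteness by the positivity of the weight $x^{N-1}$ on $I$ (for $\mathcal{M}$) and by the Dirichlet condition $w(1)=0$ built into $S_h$ (for $\mathcal{A}$). Your emphasis on the boundary condition is well placed, since that is the only point where a careless reader might stop at semidefiniteness.
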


\tn{Theorem \ref{prop:1} is a direct consequence of this lemma. We proceed to proofs of other theorems.}

\begin{proof}[Proof of Theorem \ref{prop:2}]
We use the representative matrix \eqref{eq:3m} instead of \eqref{eq:3} and set 
\[
 \mathcal{C}=(c_{i,j})_{0\le i,j\le m-1}=\mathcal{M}+\tau_{n}\mathcal{A},\quad 
c_{i,j}=\mu_{i,j}+\tau_n a_{i,j}.
\] 
If $\mathcal{C}^{-1}\ge O$, then we \ek{obtain} 
\[
 \bm{u}^{n+1}=\mathcal{C}^{-1}\left(\mathcal{M}\bm{u}^{n}+\tau_{n}\bm{F}^{n}\right)\ge \bm{0},
\]
\ek{because} $\mathcal{M}\ge O$ and $\bm{F}^n\ge \bm{0}$ in view of (f2).  
The proof that $\mathcal{C}^{-1}\ge O$ is true under \eqref{eq:tau1} is divided into three steps, each described as \ek{presented} below. 

\noindent \emph{Step 1.} We show that 
\begin{equation}
 \label{eq:p21}
\sum_{j=0}^{m-1}c_{i,j}>0\qquad (0\le i\le m-1).
\end{equation}   
Letting $1\le i\le m-2$, we calculate 
\begin{align*}
 \sum_{j=0}^{m-1}c_{i,j} 
&= \sum_{j=i-1}^{i+1}\mu_{i,j}+\tau_n\sum_{j=i-1}^{i+1} a_{i,j}\\
&= \sum_{j=i-1}^{i+1}\mu_{i,j}+\tau_n \int_{x_{i-1}}^{x_{i+1}}x^{N-1}(\phi_{i-1}+\phi_i+\phi_{i+1})_{x}(\phi_i)_{x}~dx\\
&= \sum_{j=i-1}^{i+1}\mu_{i,j}>0,
\end{align*}
\ek{because} $\phi_{i-1}+\phi_i+\phi_{i+1}\equiv 1$ in $(x_{i-1},x_{i+1})$. \ek{Cases} $i=0$ and $i=m-1$ are verified similarly. 

\noindent \emph{Step 2.} We show that, if  
\begin{equation}
 \label{eq:p25}
\tau_{n}\ge-\frac{\mu_{i,i+1}}{a_{i,i+1}},-\frac{\mu_{i,i-1}}{a_{i,i-1}}\qquad (i=0,1,\cdots,m-1),
\end{equation}
then $\mathcal{C}^{-1}\ge O$. First, \eqref{eq:p25} implies that $c_{i,i-1},c_{i,i+1}\le 0$ for $0\le i\le m-1$ \ek{because} $a_{i,i-1},a_{i,i+1}<0$.  
Matrix $\mathcal{C}$ is decomposed as $\mathcal{C}=\mathcal{D}(\mathcal{I}-\mathcal{E})$, where $\mathcal{D}=(d_{i,j})_{0\le i,j\le m-1}$ and $\mathcal{E}=(e_{i,j})_{0\le i,j\le m-1}$ are defined as
\[
d_{i,j}=
\begin{cases}
c_{i,i} & (i=j)\\
0 & (i\neq j)
\end{cases}
,\qquad 
e_{i,j}=
\begin{cases}
0& (i=j)\\
-\frac{c_{i,j}}{c_{i,i}} & (i\neq j),
\end{cases}
\]
and \ek{where} $I$ is the identity matrix.
Apparently, $\mathcal{I}-\mathcal{E}$ is non-singular and $\mathcal{D}\ge O$. 
Using \eqref{eq:p21}, we deduce 
\[
 \|\mathcal{E}\|_\infty
=\max_{0\le i\le m-1}\left( -\frac{c_{i,i-1}}{c_{i,i}}-\frac{c_{i,i+1}}{c_{i,i}}\right)<1.
\]
Therefore, matrix $\mathcal{I}-\mathcal{E}$ is non-singular and $(\mathcal{I}-\mathcal{E})^{-1}=\sum_{k=0}^\infty\mathcal{E}^k\ge O$. Consequently, we have $\mathcal{C}^{-1}=(\mathcal{I}-\mathcal{E})^{-1}\mathcal{D}^{-1}\ge O$.  

\noindent \emph{Step 3.} Finally, we \ek{demonstrate} that \eqref{eq:tau1} implies \eqref{eq:p25}. We calculate   
\begin{align*}
\mu_{i,i+1}& = \int_{x_{i}}^{x_{i+1}}x^{N-1}\frac{1}{h_{i+1}^{2}}(x-x_{i})(x_{i+1}-x)~dx
          \le\frac{1}{4}h_{i+1}^{2}\int_{x_{i}}^{x_{i+1}}\frac{1}{h_{i+1}^{2}}x^{N-1}~dx,\\
-a_{i,i+1}&=\int_{x_{i}}^{x_{i+1}}x^{N-1}\frac{1}{h_{i+1}^{2}}~dx.          
\end{align*}
Therefore, we deduce $-\frac{\mu_{i,i+1}}{a_{i,i+1}}\le \frac{1}{4}h^{2}$. 
\end{proof}

\begin{proof}[Proof of Theorem \ref{prop:3}]
\ek{Because} $f(\tilde{u}_h^n)-f(u_h^{n})\ge 0$ in $I$, the proof follows exactly the same \ek{pattern} as \ek{that of} the proof of Proposition \ref{prop:2}.     
\end{proof}

\ek{We} proceed to the result for \textup{(Non-Sym)}: 

\begin{align*}
&\mathcal{M}'=(\mu_{i,j}')_{0\le i,j\le m-1}\in\mathbb{R}^{m\times m}, 
&& \mu_{i,j}'=\dual{\phi_{j},\phi_{i}},\\
&\mathcal{B}=(b_{i,j})_{0\le i,j\le m-1}\in\mathbb{R}^{m\times m} ,
&& b_{i,j}=B(\phi_{j},\phi_{i}),\\
&\bm{G}^{n}=(G_{j}^{n})_{0\le j\le m-1}\in\mathbb{R}^{m}, && G_j^n=\dual{f(u_{h}^{n}),\phi_{j}},
\end{align*}
and express \eqref{eq:8} as 
\begin{equation}
 \label{eq:8m}
(\mathcal{M}'+\tau_{n}\mathcal{B})\bm{u}^{n+1}=\mathcal{M}'\bm{u}^{n}+\tau_{n}\bm{G}^{n}\quad (n=0,1,\ldots).
\end{equation}

\tn{
In view of \eqref{eq:bc2}, $\mathcal{M}'$ and $\mathcal{B}$ are both tri-diagonal and positive-definite matrices. 
Therefore, the proof is completed.
}

\section{Convergence and error analysis}
\label{sec:4}

\subsection{Results}
\label{sec:41}

Our convergence results for (Sym) and (Non-Sym) are stated under a smoothness assumption \ek{of} the solution $u$ of \eqref{eq:1}\ek{: given} $T>0$ and setting $Q_T=[0,1]\times [0,T]$, we assume that $u$ is sufficiently smooth such that 
\begin{equation}
\label{eq:smooth1}
\kappa_\nu(u)=
\sum_{j=0}^2 \|\partial_x^j u\|_{L^{\infty}(Q_T)}
+\sum_{l=1}^{2+\nu} \|\partial_t^l u\|_{L^{\infty}(Q_T)}
+\sum_{k=1}^{1+\nu}\|\partial_t^k\partial_x^2u\|_{L^{\infty}(Q_T)}
<\infty,
\end{equation}
where $\nu$ is either 0 or 1. 

The partition $\{x_i\}_{j=0}^m$ of $\bar{I}=[0,1]$ is assumed to be quasi\ek{-}uniform, \ek{with} a positive constant $\beta$ independent of $h$ such that 
\begin{equation}
\label{eq:beta}
h \le \beta \min_{1\le j \le m}h_{j}.
\end{equation}
Finally, the approximate initial value $u_h^0$ is chosen as 
\begin{equation}
\label{eq:iv1}
 \|u_h^0-u^0\|\le C_0h^2
\end{equation}
for a positive constant $C_0$. 

Moreover, for $k=1,2,\ldots$, we express the positive \ek{constants} $C_k=C_k(\gamma_1,\gamma_2,\ldots)$ and $h_k=h_k(\gamma_1,\gamma_2,\ldots)$ according to the parameters $\gamma_1,\gamma_2,\ldots$. \ek{Particularly}, $C_k$ and $h_k$ are independent of $h$ and $\tau$. 

\ek{Next} we state the following theorems. 

\begin{thm}[Convergence for (Sym) in $\|\cdot\|$, I]
\label{th:s1} 
Assume that $f$ is a globally Lipschitz continuous function; assume \eqref{eq:f1} and   
\begin{equation}
\tag{f3}
 \label{eq:f3}
M=\sup_{\mu>0}M_\mu<\infty.
\end{equation}
Assume that, for $T>0$, solution $u$ of \eqref{eq:1} is sufficiently smooth \ek{that} \eqref{eq:smooth1} for $\nu=0$ holds true. 
Moreover, assume that \eqref{eq:beta} and \eqref{eq:iv1} are satisfied. 
Then, there \ek{exists} $h_1=h_1(N,\beta)$ such that, for any $h\le h_1$, we have
\[
\sup_{0\le t_n\le T}\|u_{h}^{n}-u(\cdot,t_{n})\|\le C_1(h^{2}+\tau),
\]
where $C_1=C_1(T, M, \kappa_0(u), C_0, N,\beta)$ and $u_h^n$ is the solution of \textup{(Sym)}.  
\end{thm}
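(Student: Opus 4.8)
The plan is to use the elliptic (Ritz) projection technique adapted to the weighted inner product $(\cdot,\cdot)$. First I would introduce the Ritz projection $R_h\colon \dot{H}^1\to S_h$ associated with the symmetric form, defined by $A(R_h w,\chi)=A(w,\chi)$ for all $\chi\in S_h$. This is well defined because $A$ is coercive on $\dot{H}^1$ with respect to $\|\cdot\|$; indeed, $w(1)=0$ gives $w(x)=-\int_x^1 w_x\,ds$, and an application of Cauchy--Schwarz together with Fubini's theorem yields the weighted Poincar\'e inequality $\|w\|^2\le N^{-1}A(w,w)$. The spatial heart of the argument is then the weighted projection estimate $\|w-R_h w\|\le C h^2$, valid under the weighted $H^2$-type regularity measured by $\kappa_0(u)$, together with the analogous bound for the time derivative; I would draw these from the linear theory of \cite{et84,tho06}. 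It is precisely the validity of these estimates, whose constants depend on the weight exponent $N$ and the quasi-uniformity parameter $\beta$ through \eqref{eq:beta}, that forces the smallness threshold $h\le h_1(N,\beta)$.

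Next I would split the error as $u_h^n-u(\cdot,t_n)=\theta^n+\rho^n$ with $\theta^n=u_h^n-R_h u(\cdot,t_n)$ and $\rho^n=R_h u(\cdot,t_n)-u(\cdot,t_n)$. Since $\|\rho^n\|\le C h^2$ is immediate from the projection estimate, the task reduces to controlling $\theta^n$. Testing \textup{(Sym)} against $\chi\in S_h$ and subtracting the weak form of \eqref{eq:1a} at $t_{n+1}$, in which the elliptic term is rewritten via the definition of $R_h$, I obtain the error equation
\[
(\partial_{\tau_n}\theta^{n+1},\chi)+A(\theta^{n+1},\chi)=(f(u_h^n)-f(u(\cdot,t_{n+1})),\chi)+(\omega^{n+1},\chi),
\]
where $\omega^{n+1}=(u_t(\cdot,t_{n+1})-\partial_{\tau_n}u(\cdot,t_{n+1}))-\partial_{\tau_n}\rho^{n+1}$. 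The first part of $\omega^{n+1}$ is the backward-difference truncation error, bounded by $C\tau\|\partial_t^2 u\|_{L^\infty(Q_T)}$ via Taylor's theorem, and the second is $O(h^2)$ because $A$ is time-independent, so $R_h$ commutes with the time difference and $\partial_t\partial_x^2 u$ is controlled by $\kappa_0(u)$.

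Then I would take $\chi=\theta^{n+1}$ and run the energy argument. Using $(\partial_{\tau_n}\theta^{n+1},\theta^{n+1})\ge\tfrac{1}{2\tau_n}(\|\theta^{n+1}\|^2-\|\theta^n\|^2)$ and $A(\theta^{n+1},\theta^{n+1})\ge 0$, and on the right the global Lipschitz bound $\|f(u_h^n)-f(u(\cdot,t_n))\|\le M(\|\theta^n\|+\|\rho^n\|)$ together with $\|f(u(\cdot,t_n))-f(u(\cdot,t_{n+1}))\|\le C M\tau$, I arrive after dividing by $\|\theta^{n+1}\|+\|\theta^n\|$ at the one-step recursion
\[
\|\theta^{n+1}\|\le (1+C\tau_n)\|\theta^n\|+C\tau_n(h^2+\tau).
\]
A discrete Gronwall inequality, using $\sum_k\tau_k\le T$, gives $\|\theta^n\|\le e^{CT}(\|\theta^0\|+h^2+\tau)$, and the initial bound $\|\theta^0\|\le\|u_h^0-u^0\|+\|u^0-R_h u^0\|\le (C_0+C)h^2$ from \eqref{eq:iv1} closes the estimate. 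Combining with the bound on $\rho^n$ yields the claim.

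The decisive feature that makes this the easy case is assumption \eqref{eq:f3}: the uniform Lipschitz constant $M$ lets the nonlinear term be absorbed into the Gronwall step \emph{without} any a priori $L^\infty$ control on $u_h^n$, so no inverse-inequality bootstrap is needed (this is what later forces the restriction $N\le 3$ in the locally Lipschitz Theorem \ref{th:s3}). The one genuinely delicate point, and where I expect the main obstacle to lie, is the weighted approximation theory: every constant must be obtained in the degenerate weight $x^{N-1}$ that vanishes at the origin, so I must verify that $R_h$ and its time derivative satisfy $O(h^2)$ bounds in $\|\cdot\|$ uniformly down to $x=0$ — which is exactly the content restricting $h$ below $h_1(N,\beta)$.
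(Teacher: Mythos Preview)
Your proposal is correct and follows essentially the same route as the paper: the same Ritz projection (there denoted $P_A$), the same splitting $u_h^n-u(t_n)=\theta^n+\rho^n$, the same error equation tested against $\theta^{n+1}$, and the same discrete Gronwall argument with the initial term handled via \eqref{eq:iv1} and the projection estimate. The only cosmetic difference is that the paper uses $(\partial_{\tau_n}\theta^{n+1},\theta^{n+1})\ge \tau_n^{-1}(\|\theta^{n+1}\|^2-\|\theta^n\|\,\|\theta^{n+1}\|)$ and divides by $\|\theta^{n+1}\|$, while you use the polarization lower bound and divide by $\|\theta^{n+1}\|+\|\theta^n\|$; both yield the same one-step recursion.
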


\ek{For} $L^\infty$ error estimates, we \ek{must} further assume that $u_h^0$ is chosen as 
\begin{equation}
\label{eq:iv2}
 A(u_h^0-u^0,v_h)=0 \quad (v_h\in S_h).
\end{equation}

\begin{thm}[Convergence for (Sym) in $\|\cdot\|_{L^\infty(\sigma,1)}$, I]
\label{th:s2}
In addition to the assumption of Theorem \ref{th:s1}, assume that \eqref{eq:iv2} is satisfied. Furthermore, let $\sigma\in(0,1)$ be arbitrary. 
Then, there exists an $h_2=h_2(N,\beta)$ such that, for any $h\le h_2$, we have 
\[
\sup_{0\le t_n\le T}\|u_{h}^{n}-u(\cdot,t_{n})\|_{L^{\infty}(\sigma,1)}\le C_2\left(h^{2}\log\frac{1}{h}+\tau\right),
\]                                                                                        
where $C_2=C_2(T, M, \kappa_0(u), C_0, N,\beta,\sigma)$ and $u_h^n$ is the solution of \textup{(Sym)}.                                                                        
\end{thm}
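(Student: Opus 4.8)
The plan is to follow the elliptic-projection route of \cite{et84,tho06} adapted to the degenerate weight $x^{N-1}$, but now measuring the spatial error in the maximum norm. First I would introduce the weighted Ritz projection $R_h\colon \dot{H}^1\to S_h$ defined by $A(R_h w-w,\chi)=0$ for all $\chi\in S_h$, and split the error as $u_h^n-u(\cdot,t_n)=\theta^n+\rho^n$ with $\theta^n=u_h^n-R_hu(\cdot,t_n)\in S_h$ and $\rho^n=R_hu(\cdot,t_n)-u(\cdot,t_n)$. The extra hypothesis \eqref{eq:iv2} is exactly the statement $u_h^0=R_hu^0$, so that $\theta^0=0$; this is what lets the temporal and spatial contributions decouple cleanly. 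Since $\sigma>0$, on $(\sigma,1)$ the weight satisfies $\sigma^{N-1}\le x^{N-1}\le 1$, so the weighted and unweighted $L^2$ and $L^\infty$ norms are equivalent there, with constants depending only on $\sigma$ and $N$; all maximum-norm work is therefore confined to $(\sigma,1)$, while the region near the origin, where the weight degenerates and pointwise control is hopeless, will be handled only in the weighted $L^2$ norm $\|\cdot\|$.

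Second, I would establish the maximum-norm bound for the elliptic part, $\|\rho^n\|_{L^\infty(\sigma,1)}\le C h^2\log(1/h)\,\kappa_0(u)$, together with the analogous bound for the discrete time difference $\partial_{\tau_n}\rho^{n+1}$. This is the step that produces the logarithmic factor: using a regularized discrete Green's function $g_{x_0}$ for $A$ at a point $x_0\in(\sigma,1)$, Galerkin orthogonality gives $\rho^n(x_0)=A(\rho^n,g_{x_0}-\chi)$ for any $\chi\in S_h$, and the $\log(1/h)$ enters through the $L^1$-type bound on the Green's function error for piecewise-linear elements. This is a purely stationary, $\tau$-independent estimate, which is why no logarithm will attach to the temporal term.

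Third, I would treat $\theta^n$. Subtracting the scheme \eqref{eq:3} from the weak form of \eqref{eq:1} at $t=t_{n+1}$, writing $u^n=u(\cdot,t_n)$, and using $A(\rho^{n+1},\chi)=0$, yields the discrete parabolic identity $(\partial_{\tau_n}\theta^{n+1},\chi)+A(\theta^{n+1},\chi)=(r^{n+1},\chi)$ for $\chi\in S_h$, where $r^{n+1}=-\partial_{\tau_n}\rho^{n+1}+(u_t(\cdot,t_{n+1})-\partial_{\tau_n}u^{n+1})+(f(u_h^n)-f(u^{n+1}))$. The crucial point is that the discrete maximum principle of Theorems \ref{prop:2}--\ref{prop:3} is \emph{not} available here, since neither monotonicity \eqref{eq:f2} nor the step-size condition \eqref{eq:tau1} is assumed; hence the maximum-norm control of $\theta^n$ cannot come from positivity and must instead be obtained from a stability estimate for the discrete solution operator $(\mathcal{I}+\tau_n\mathcal{A})^{-1}\mathcal{M}$. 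I would establish this stability by a weighted-norm, Nitsche--Schatz--Wahlbin type duality argument, giving $\sup_n\|\theta^n\|_{L^\infty(\sigma,1)}\le C\log(1/h)\max_k\|r^k\|_{L^\infty}$ after a discrete Duhamel summation. For the residual, the time-truncation term is $O(\tau)$ and $\partial_{\tau_n}\rho^{n+1}$ is $O(h^2\log(1/h))$ by the second step; for the nonlinear term I would use the global Lipschitz continuity \eqref{eq:f3} to write $|f(u_h^n)-f(u^{n+1})|\le M(|\theta^n|+|\rho^n|)+CM\tau$, routing the near-origin contribution through the already-proved weighted $L^2$ estimate of Theorem \ref{th:s1} and the local contribution through $\|\theta^n\|_{L^\infty(\sigma,1)}$, and then closing the argument with a discrete Gronwall inequality.

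The main obstacle will be the third step, specifically proving the logarithmic maximum-norm stability of the fully discrete weighted scheme in the absence of a discrete maximum principle. Unlike the weighted $L^2$ analysis of Theorem \ref{th:s1}, one cannot simply test with $\chi=\theta^{n+1}$; a genuine pointwise, weighted-norm argument is needed, and it must be carried out for the singular coefficient $x^{N-1}$ while keeping all maximum-norm estimates localized to $(\sigma,1)$ and transferring the degenerate near-origin part to the weighted $L^2$ norm. A secondary technical difficulty is to keep the logarithm attached only to the spatial order $h^2$ and not to $\tau$, which forces the temporal and spatial residual components to be estimated through different norms.
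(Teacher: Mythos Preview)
Your decomposition $u_h^n-u(t_n)=\theta^n+\rho^n$ and your treatment of $\rho^n$ via the $L^\infty$ elliptic-projection estimate \eqref{eq:tj2} are correct and match the paper. The divergence is in Step~3, where you propose a Nitsche--Schatz--Wahlbin $L^\infty$ stability analysis of the fully discrete weighted operator. That is far harder than necessary and carries the unresolved difficulties you yourself flag: establishing pointwise stability for the degenerate weight $x^{N-1}$ is a substantial project, and even a result of the form $\|\theta^n\|_{L^\infty(\sigma,1)}\le C\log(1/h)\max_k\|r^k\|$ would attach the logarithm to the $O(\tau)$ time-truncation residual, contradicting the target $h^2\log(1/h)+\tau$. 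Your plan to ``route the near-origin contribution through the weighted $L^2$ estimate'' is also not concretely realizable inside an $L^\infty$ Duhamel framework.

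The paper bypasses all of this with a one-line Sobolev inequality and a weighted energy argument. Since $\theta^n\in S_h\subset\dot H^1$ vanishes at $x=1$, for any $\sigma\in(0,1)$ one has
\[
\|\theta^n\|_{L^\infty(\sigma,1)}\le \|\theta^n_x\|_{L^1(\sigma,1)}
\le \Bigl(\int_\sigma^1 x^{-(N-1)}\,dx\Bigr)^{1/2}\|\theta^n_x\|
= C(\sigma,N)\,\|\theta^n_x\|,
\]
so it suffices to bound the \emph{weighted} $H^1$ seminorm $\|\theta^n_x\|=A(\theta^n,\theta^n)^{1/2}$. This is obtained by testing the error identity \eqref{eq:th1.10} not with $\chi=\theta^{n+1}$ but with $\chi=\partial_{\tau_n}\theta^{n+1}$: the left side produces $\|\partial_{\tau_n}\theta^{n+1}\|^2+\frac{1}{2\tau_n}\bigl[A(\theta^{n+1},\theta^{n+1})-A(\theta^n,\theta^n)\bigr]$, the four right-hand terms are absorbed by Young's inequality using the weighted $L^2$ bounds already proved in Theorem~\ref{th:s1} (namely \eqref{eq:th1.12}, \eqref{eq:th1.11}, \eqref{eq:th1.14}), and after telescoping and invoking $\theta^0=0$ from \eqref{eq:iv2} one gets $\|\theta^n_x\|\le C\,t_n^{1/2}(h^2+\tau)$ with \emph{no} logarithm. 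The $\log(1/h)$ in the final estimate therefore comes solely from $\rho^n$, which is precisely why it attaches only to $h^2$.

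In short, the missing idea is that on $(\sigma,1)$ the $L^\infty$ norm of $\theta^n$ is dominated by its weighted $H^1$ seminorm on all of $I$, and the latter admits an elementary energy estimate. No discrete Green's function, no $L^\infty$ stability of the solution operator, and no discrete maximum principle are needed.
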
  

The restriction that $f$ is a globally Lipschitz continuous function with (f3) can be removed in \ek{the following manner.}  

\begin{thm}[Convergence of (Sym) in $\|\cdot\|$, II]
\label{th:s3}
Given \ek{that} $T>0$ and \ek{that} only \eqref{eq:f1} is satisfied, 
we assume that \eqref{eq:smooth1} with $\nu=0$, \eqref{eq:beta}, 
and \eqref{eq:iv1} are satisfied.  
Furthermore, assume that $N\le 3$ and 
that there exist positive constants $c_1$ and $\sigma$ such that 
\begin{equation}
 \label{eq:mesh}
\tau h^{-N/2}\le c_1h^{\sigma}.  
\end{equation}
\ek{Then} there exists an $h_3=h_3(T, \kappa_0(u), C_0, N,\beta)$ such that, for any $h\le h_3$, we have 
\[
\sup_{0\le t_n\le T}\|u_{h}^{n}-u(\cdot,t_{n})\|\le C_2(h^{2}+\tau),
\]
where $C_3=C_3(T, \kappa_0(u), C_0, N,\beta)$ and $u_h^n$ is the solution of \textup{(Sym)}.          \end{thm}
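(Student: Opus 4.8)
The plan is to obtain Theorem~\ref{th:s3} by a bootstrap/continuation argument that leverages the already-established global estimate of Theorem~\ref{th:s1}. The key difficulty with a locally Lipschitz $f$ is that the Lipschitz constant $M_\mu$ depends on the sup-norm bound $\mu$ of the functions involved, which is precisely what we are trying to control. First I would fix $\mu_0=\|u\|_{L^\infty(Q_T)}+1$ and truncate: define a globally Lipschitz function $\tilde f$ that agrees with $f$ on $[-\mu_0,\mu_0]$ (say by freezing $f$ to be constant outside $[-\mu_0,\mu_0]$), so that $\tilde f$ satisfies both \eqref{eq:f1} and \eqref{eq:f3} with $M=M_{\mu_0}$. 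Let $\tilde u_h^n$ denote the solution of \textup{(Sym)} with $\tilde f$ in place of $f$. By Theorem~\ref{th:s1} applied to $\tilde f$, there is $h_1$ so that for $h\le h_1$,
\begin{equation}
\label{eq:tr1}
\sup_{0\le t_n\le T}\|\tilde u_h^n-u(\cdot,t_n)\|\le \tilde C_1(h^2+\tau),
\end{equation}
with $\tilde C_1$ independent of $h,\tau$ and depending only on the stated parameters.

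The heart of the argument is then to upgrade the weighted $L^2$ bound \eqref{eq:tr1} to a pointwise $L^\infty$ bound, in order to verify that $\tilde u_h^n$ actually stays inside $[-\mu_0,\mu_0]$, whence $\tilde f(\tilde u_h^n)=f(\tilde u_h^n)$ and $\tilde u_h^n$ solves the original scheme; uniqueness (Theorem~\ref{prop:1}) then forces $u_h^n=\tilde u_h^n$ and \eqref{eq:tr1} becomes the desired estimate. To pass from $\|\cdot\|$ to $L^\infty$ I would invoke an inverse inequality on the quasi-uniform mesh \eqref{eq:beta}. Concretely, for $v_h\in S_h$ one expects an estimate of the form $\|v_h\|_{L^\infty(I)}\le C h^{-N/2}\|v_h\|$, where the exponent $N/2$ reflects the weight $x^{N-1}$ in the norm $\|\cdot\|$ (this is where the restriction $N\le 3$ is used, so that combined with the approximation error the resulting exponent stays favourable). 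Writing $\tilde u_h^n-u(\cdot,t_n)=(\tilde u_h^n-\Pi_h u)+(\Pi_h u-u)$ for the interpolant (or elliptic/Ritz projection) $\Pi_h u$, the second piece is controlled in $L^\infty$ directly by interpolation estimates, while the first, a finite element function, is handled by the inverse inequality applied to \eqref{eq:tr1}. This yields
\begin{equation}
\label{eq:tr2}
\sup_{0\le t_n\le T}\|\tilde u_h^n-u(\cdot,t_n)\|_{L^\infty(I)}
\le C\, h^{-N/2}(h^2+\tau)+C h^2\log\tfrac1h .
\end{equation}

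The mesh condition \eqref{eq:mesh}, namely $\tau h^{-N/2}\le c_1 h^\sigma$, is exactly what forces the $h^{-N/2}\tau$ term to be $o(1)$; together with $N\le 3$ making $h^{-N/2}h^2=h^{2-N/2}\to 0$, the right-hand side of \eqref{eq:tr2} tends to zero as $h\to 0$. Hence there exists $h_3$ so that for $h\le h_3$ the right-hand side is $\le 1$, giving $\|\tilde u_h^n\|_{L^\infty}\le \|u\|_{L^\infty(Q_T)}+1=\mu_0$ for all $t_n\le T$. This closes the truncation argument and completes the proof.

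I expect the main obstacle to be the inverse inequality in the weighted norm and its interaction with the exponent bookkeeping. One must establish $\|v_h\|_{L^\infty(I)}\lesssim h^{-N/2}\|v_h\|$ carefully near the origin, where the weight $x^{N-1}$ degenerates: on the first element $I_1=(0,h_1)$ the weight is smallest, so a naive elementwise estimate could be lossy, and I would need to check that the quasi-uniformity \eqref{eq:beta} makes the constant uniform and that the origin does not worsen the exponent. This degeneracy of the weight at $x=0$, rather than the continuation scheme itself, is what genuinely restricts the method to $N\le 3$, consistent with the paper's stated conclusions.
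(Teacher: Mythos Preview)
Your proposal is correct and follows essentially the same route as the paper: truncate $f$ outside $[-\mu_0,\mu_0]$ with $\mu_0=\|u\|_{L^\infty(Q_T)}+1$, apply Theorem~\ref{th:s1} to the truncated scheme, upgrade the weighted $L^2$ bound to $L^\infty$ via the inverse inequality $\|v_h\|_{L^\infty(I)}\le C h^{-N/2}\|v_h\|$ (this is exactly the paper's Lemma~\ref{la:ie}, and your remark about the degenerate weight on $I_1$ is precisely the content of its proof), and close the bootstrap using $N\le 3$ and \eqref{eq:mesh}. The paper decomposes through the Ritz projection $P_A u$ and uses the intermediate bound \eqref{eq:th1.14} on $\theta^n$ rather than the full error \eqref{eq:tr1}, but this is only a cosmetic difference from your decomposition via $\Pi_h u$.
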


\begin{thm}[Convergence for (Sym) in $\|\cdot\|_{L^\infty(\sigma,1)}$, II]
\label{th:s4}
Given \ek{that} $T>0$ and \ek{that} \eqref{eq:f1} is satisfied, 
we assume that \eqref{eq:smooth1} with $\nu=0$, \eqref{eq:beta}, 
\eqref{eq:iv1}, \eqref{eq:iv2} and \eqref{eq:mesh} are satisfied.  
\ek{Consequently}, there \ek{exists} $h_4=h_4(T, \kappa_0(u), C_0, N,\beta)$ such that, for any $h\le h_4$, we have  
\[
\sup_{0\le t_n\le T}\|u_{h}^{n}-u(\cdot,t_{n})\|_{L^{\infty}(\sigma,1)}
\le C_4\left(h^{2}\log\frac{1}{h}+\tau\right),
\]
where $C_4=C_4(T, \kappa_0(u), C_0, N,\beta)$ and $u_h^n$ is the solution of \textup{(Sym)}.                                            
\end{thm}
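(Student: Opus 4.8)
The plan is to reduce Theorem \ref{th:s4} to the globally Lipschitz case already settled in Theorem \ref{th:s2}, the bridge being an \emph{a priori} uniform bound on $u_h^n$ that I will extract from the conclusion of Theorem \ref{th:s3}. The guiding observation is that $f$ enters both the continuous problem \eqref{eq:1} and the scheme \textup{(Sym)} only through its values on a bounded set; hence, once the range of $u_h^n$ is controlled, $f$ may be replaced by a globally Lipschitz surrogate without changing either the exact or the discrete solution.

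First I would establish the uniform bound, and this is the only genuine obstacle; the rest is formal. Theorem \ref{th:s3} gives $\sup_{0\le t_n\le T}\|u_h^n-u(\cdot,t_n)\|\le C_3(h^2+\tau)$. Writing $u_h^n-I_hu(\cdot,t_n)\in S_h$ for the nodal interpolant $I_h$, I combine the global inverse inequality $\|v_h\|_{L^\infty(I)}\le Ch^{-N/2}\|v_h\|$ (the factor $h^{-N/2}$ arising from the element nearest the origin, where the weight $x^{N-1}$ is smallest) with the weighted interpolation bound $\|u(\cdot,t_n)-I_hu(\cdot,t_n)\|\le Ch^2\kappa_0(u)$ to get
\[
\|u_h^n-I_hu(\cdot,t_n)\|_{L^\infty(I)}\le Ch^{-N/2}\bigl(h^2+\tau\bigr).
\]
Because $N\le 3$ forces $h^{-N/2}h^2=h^{2-N/2}\le h^{1/2}\to 0$, while \eqref{eq:mesh} forces $h^{-N/2}\tau\le c_1h^\sigma\to 0$, the right-hand side tends to $0$. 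Since $\|I_hu(\cdot,t_n)\|_{L^\infty(I)}\le\|u\|_{L^\infty(Q_T)}\le\kappa_0(u)$, it follows that there is an $h_3'=h_3'(T,\kappa_0(u),C_0,N,\beta)$ with $\|u_h^n\|_{L^\infty(I)}\le\mu:=\kappa_0(u)+1$ for all $t_n\le T$ whenever $h\le h_3'$. This is precisely where $N\le 3$ and the mesh condition \eqref{eq:mesh} are consumed.

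Next comes the truncation. Let $\tilde f$ be any globally Lipschitz function coinciding with $f$ on $[-\mu,\mu]$; by \eqref{eq:f1} its Lipschitz constant may be taken to be $M=M_\mu$, so $\tilde f$ satisfies \eqref{eq:f3}. Since $|u|\le\mu$ on $Q_T$, the exact solution $u$ solves \eqref{eq:1} with $\tilde f$ in place of $f$; since $\|u_h^n\|_{L^\infty(I)}\le\mu$, the scheme \textup{(Sym)} generates the same iterates $u_h^n$ for $\tilde f$ as for $f$. The remaining hypotheses \eqref{eq:smooth1} (with $\nu=0$), \eqref{eq:beta}, \eqref{eq:iv1} and \eqref{eq:iv2}, as well as the fixed endpoint $\sigma\in(0,1)$, are untouched by the replacement.

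Finally I would invoke Theorem \ref{th:s2} for the $\tilde f$-problem on the interval $(\sigma,1)$, obtaining
\[
\sup_{0\le t_n\le T}\|u_h^n-u(\cdot,t_n)\|_{L^\infty(\sigma,1)}\le C_2\Bigl(h^2\log\frac{1}{h}+\tau\Bigr)\qquad(h\le h_2(N,\beta)).
\]
As $M=M_\mu$ is determined by $\mu=\mu(\kappa_0(u))$, the constant $C_2$ depends only on the quantities allowed for $C_4$; setting $h_4=\min\{h_2,h_3,h_3'\}$ then yields the assertion. The only care needed beyond the first step is the bookkeeping confirming that $\mu$, hence $M_\mu$, hence the final constant, depends solely on the parameters listed in the statement.
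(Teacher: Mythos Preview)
Your proposal is correct and follows essentially the same strategy as the paper: truncate $f$ outside a level set containing both $u$ and $u_h^n$, then invoke the globally Lipschitz estimate (Theorem~\ref{th:s2}). The only organizational difference is that you use Theorem~\ref{th:s3} as a black box and re-extract the uniform $L^\infty$ bound on $u_h^n$ via the inverse inequality applied to $u_h^n-I_hu(\cdot,t_n)$, whereas the paper simply repeats the truncation argument of Theorem~\ref{th:s3} verbatim (with $P_A$ in place of your $I_h$) and swaps Theorem~\ref{th:s1} for Theorem~\ref{th:s2} at the final step; both routes consume $N\le 3$ and \eqref{eq:mesh} in exactly the same place.
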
           

\tn{Subsequently}, let us proceed to \ek{error} estimates for (Non-Sym). 
For the approximate initial value $u_h^0$, we choose        
\begin{equation}
\label{eq:iv3}
 B(u_h^0-u^0,v_h)=0\qquad (v_h\in S_h).
\end{equation}
Quasi-uniformity is \tn{also} required for the time partition\ek{. Therefore,} 
there exists a positive constant $\gamma>0$ such that 
\begin{equation}
\label{eq:qt}
\tau \le \gamma \tau_{\min}\ek{,}
\end{equation}
where $\tau_{\min}=\min_{n\ge 0}\tau_n$. Moreover, we set 
\begin{equation}
 \label{eq:tau}
\delta=\sup_{\tn{t_{k+1}}\in [0,T]}|\tau_{k}-\tau_{k+1}|. 
\end{equation}

\begin{thm}[Convergence for (Non-Sym), I]
\label{th:s5}
Let $f$ be a $C^1$ function satisfying 
\begin{equation}
\tag{f4}
\label{eq:f4} 
M_1=\sup_{s\in\mathbb{R}}|f'(s)|<\infty,\quad 
M_2=\sup_{s\ne s'\in \mathbb{R}}\frac{|f'(s)-f'(s')|}{|s-s'|}<\infty.
\end{equation}
Given $T>0$, 
we assume that the solution $u$ of \eqref{eq:1} is sufficiently smooth \ek{that} \eqref{eq:smooth1} for $\nu=1$ holds true. Furthermore, we assume that \eqref{eq:beta}, \eqref{eq:iv3} and \eqref{eq:qt} are satisfied. Then, there exists an $h_5=h_5(T, \kappa_1(u), M_1,M_2,\gamma,N,\beta)$ such that, for any $h\le h_5$, we have  
 \[
\sup_{0\le t_n\le T}\|u_{h}^{n}-u(\cdot,t_{n})\|_{L^{\infty}(I)} 
\le C_5\left(\log\frac{1}{h}\right)^{\frac{1}{2}}\left(h^{2}+\tau+\frac{\delta}{\tau_{\min}}\right) ,
\]
where $C_5=C_5(T, \kappa_1(u), M_1,M_2,\gamma,N,\beta)>0$ and $u_h^n$ is the solution of \textup{(Non-Sym)}. 
\end{thm}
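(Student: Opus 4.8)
The plan is to use the Ritz projection associated with the bilinear form $B$, split the error accordingly, and extract the global maximum-norm bound from a weighted discrete Sobolev inequality, the latter being precisely the source of the factor $(\log\frac1h)^{1/2}$. I would define $R_h\colon\dot H^1\to S_h$ by $B(R_hv-v,\chi)=0$ for all $\chi\in S_h$, which is solvable by the coercivity \eqref{eq:bc2}. Writing $I_h$ for nodal interpolation and $\rho(t)=R_hu(\cdot,t)-u(\cdot,t)$, I would record the approximation estimates $\vnorm{\rho}+h\vnorm{\rho_x}\le Ch^2$ and $\vnorm{\partial_t\rho}\le Ch^2$, together with the superconvergence bound $\vnorm{(R_hu-I_hu)_x}\le Ch^2$, which follows from the identity $B(R_hu-I_hu,\chi)=B(u-I_hu,\chi)$ and the elementwise vanishing mean of $(u-I_hu)_x$; all constants are controlled by $\kappa_1(u)$. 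Because \eqref{eq:iv3} forces $u_h^0=R_hu^0$, the splitting $u_h^n-u(\cdot,t_n)=\theta^n+\rho^n$ with $\theta^n=u_h^n-R_hu(\cdot,t_n)$ starts from $\theta^0=0$, and the whole task reduces to bounding the $S_h$-function $\theta^n$ in $L^\infty(I)$.

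The crucial ingredient is a discrete Sobolev inequality in the weighted norm: for every $v_h\in S_h$,
\[
\|v_h\|_{L^\infty(I)}\le C\Bigl(\log\tfrac1h\Bigr)^{1/2}\vnorm{(v_h)_x},\qquad C=C(\beta).
\]
Since $v_h$ is piecewise linear its maximum is attained at a node, and for $i\ge1$ one writes $v_h(x_i)=-\int_{x_i}^1(v_h)_x\,dx$ and applies the weighted Cauchy--Schwarz inequality, using $\int_{x_1}^1x^{-1}\,dx\le\log\frac1h$ together with the quasi-uniformity \eqref{eq:beta}; the innermost value $v_h(0)$ is handled separately on $(0,x_1)$, where $(v_h)_x$ is constant, giving $|v_h(0)|\le\sqrt2\,\vnorm{(v_h)_x}$. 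The weight $x$ lies exactly at the borderline where the continuous embedding into $L^\infty$ fails---a profile $\sim\log\frac1x$ truncated at $x=h$ has height $\sim\log\frac1h$ and energy $\vnorm{(v_h)_x}^2\sim\log\frac1h$---which is why one obtains the square root of the logarithm rather than a full power.

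For the finite element part I would proceed by energy estimates. The error equation is $\dual{\partial_{\tau_n}\theta^{n+1},\chi}+B(\theta^{n+1},\chi)=\dual{\omega^n,\chi}$ for all $\chi\in S_h$, where $\omega^n$ gathers the backward-Euler truncation error, the projection term $(R_h-I)\partial_{\tau_n}u$, and the frozen-nonlinearity difference $f(u_h^n)-f(u(\cdot,t_{n+1}))$; by \eqref{eq:f4} the last is bounded by $M_1|\theta^n+\rho^n|$ plus an $O(\tau)$ time-lag contribution. Testing with $\chi=\theta^{n+1}$ and invoking $B(\theta^{n+1},\theta^{n+1})\ge\vnorm{\theta^{n+1}_x}^2$ from \eqref{eq:bc2}, a discrete Gronwall argument yields $\sup_n\vnorm{\theta^n}\le C(h^2+\tau)$. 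To reach $\vnorm{\theta^n_x}$ pointwise in time I would differentiate the scheme: with $\psi^n=\partial_{\tau_{n-1}}\theta^n$, subtracting consecutive error equations gives $\dual{\partial_{\tau_n}\psi^{n+1},\chi}+B(\psi^{n+1},\chi)=\dual{\Phi^n,\chi}$, where $\Phi^n$ is the discrete time difference of $\omega^n$. Crucially the test function is again $\psi^{n+1}$ itself, so the nonsymmetric part of $B$ contributes only the nonnegative term in \eqref{eq:bc2} and causes no harm; a second energy estimate, started from $\vnorm{\psi^1}=\vnorm{\theta^1}/\tau_0\le\vnorm{\omega^0}$, then gives $\sup_n\vnorm{\psi^n}\le C(h^2+\tau+\delta/\tau_{\min})$. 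Reinserting $\partial_{\tau_n}\theta^{n+1}=\psi^{n+1}$ and using coercivity once more, $\vnorm{\theta^{n+1}_x}^2\le\dual{\omega^n-\psi^{n+1},\theta^{n+1}}$, whence $\vnorm{\theta^n_x}\le C(h^2+\tau+\delta/\tau_{\min})$. Applying the discrete Sobolev inequality to $\theta^n$ then bounds $\|\theta^n\|_{L^\infty(I)}$ by $C(\log\frac1h)^{1/2}(h^2+\tau+\delta/\tau_{\min})$, while the same inequality applied to $R_hu-I_hu$ together with $\|I_hu-u\|_{L^\infty(I)}\le Ch^2$ gives $\|\rho^n\|_{L^\infty(I)}\le Ch^2(\log\frac1h)^{1/2}$; summing the two proves the theorem.

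The principal difficulty is twofold. The decisive point is the weighted discrete Sobolev inequality with the sharp exponent $\tfrac12$: because the weight $x$ sits exactly on the threshold of the $L^\infty$ embedding, obtaining $(\log\frac1h)^{1/2}$ rather than $\log\frac1h$ rests on the borderline computation above and on \eqref{eq:beta}. The second is the differentiated energy estimate on nonuniform grids: bounding $\vnorm{\Phi^n}$ forces one to estimate discrete second time-differences of both the truncation error and the frozen nonlinearity, and it is here that the smoothness \eqref{eq:smooth1} with $\nu=1$, the $C^{1,1}$ constant $M_2$ in \eqref{eq:f4}, and the time quasi-uniformity \eqref{eq:qt} are consumed, and where the term $\delta/\tau_{\min}$, with $\delta$ as in \eqref{eq:tau}, is produced.
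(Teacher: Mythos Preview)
Your proposal is correct and follows essentially the same route as the paper: the $P_B$-projection splitting $\theta^n+\rho^n$, the weighted discrete Sobolev inequality $\|\chi\|_{L^\infty(I)}\le C(\log\tfrac1h)^{1/2}\vnorm{\chi_x}$ for $\chi\in S_h$, an energy bound on $\vnorm{\theta^n}$ by testing with $\theta^{n+1}$, a time-differenced energy bound on $\vnorm{\partial_{\tau_n}\theta^{n+1}}$ (this is where $M_2$, the extra smoothness $\nu=1$, \eqref{eq:qt}, and the term $\delta/\tau_{\min}$ enter, exactly as you say), and then coercivity of $B$ to recover $\vnorm{\theta^n_x}$. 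The one minor deviation is your treatment of $\|\rho^n\|_{L^\infty(I)}$: the paper simply quotes the log-free bound $\|P_Bw-w\|_{L^\infty(I)}\le Ch^2\|w_{xx}\|_{L^\infty(I)}$ from \cite{et84} (Lemma~\ref{prop:tj3}), whereas you reach it via the superconvergence of $R_hu-I_hu$ plus the discrete Sobolev inequality, picking up a harmless extra factor $(\log\tfrac1h)^{1/2}$ that is absorbed in the final estimate anyway.
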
                                                                                                                
Finally, we state the error estimates for non-globally Lipschitz continuous function $f$. To avoid \ek{unnecessary} complexity, we deal \ek{only} with the power nonlinearity $f(s)=s|s|^\alpha$.  

\begin{thm}[Convergence for (Non-Sym), II]
\label{th:s6}
\ek{Letting} $f(s)=s|s|^\alpha$for $s\in\mathbb{R}$, where $\alpha\ge 1$\ek{, then given} $T>0$, 
we assume that \eqref{eq:smooth1} with $\nu=1$, \eqref{eq:beta} and \eqref{eq:iv3} are satisfied.  
Then, there exists an $h_6=h_6(T,\kappa_1(u),\gamma,N,\beta)$ such that, for any $h\le h_6$, we have  
\[
\sup_{0\le t_n\le T}\|u_{h}^{n}-u(\cdot,t_{n})\|_{L^{\infty}(I)}
\le C_6\left(\log\frac{1}{h}\right)^{\frac{1}{2}}(h^{2}+\tau),
\]
where $C_6=C_6(T, \kappa_1(u),\gamma,N,\beta)$ and $u_h^n$ is the solution of \textup{(Non-Sym)}. 
\end{thm}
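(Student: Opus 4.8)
The plan is to deduce Theorem \ref{th:s6} from the globally Lipschitz result Theorem \ref{th:s5} by a truncation argument combined with a bootstrap that keeps the discrete solution inside the region where $f$ is controlled. The structural fact I would exploit is that (Non-Sym) is \emph{linearly implicit}: in the matrix form \eqref{eq:8m} the nonlinearity enters only through $\bm{G}^n$, which depends on the \emph{known} level $u_h^n$, so each step is a fixed linear solve whose system matrix $\mathcal{M}'+\tau_n\mathcal{B}$ is independent of $f$.

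First I would truncate the nonlinearity. Set $K=\|u\|_{L^\infty(Q_T)}+1$ and let $\tilde f$ coincide with $f(s)=s|s|^\alpha$ on $[-K,K]$ and be extended affinely for $|s|>K$, matching the value and first derivative of $f$ at $s=\pm K$. Because $\alpha\ge 1$, the map $f$ is $C^1$ with $f'(s)=(\alpha+1)|s|^\alpha$ locally Lipschitz, so $\tilde f$ is globally $C^1$ with $\tilde f'$ bounded and globally Lipschitz; that is, $\tilde f$ satisfies \eqref{eq:f4} with constants $M_1,M_2$ depending only on $K$ and $\alpha$, hence only on $\kappa_1(u)$. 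Since $|u|\le K-1<K$ on $Q_T$, the exact solution also solves \eqref{eq:1} with $\tilde f$ in place of $f$, so Theorem \ref{th:s5} applies to (Non-Sym) driven by $\tilde f$. Under the uniform time partition assumed here one has $\delta=0$ and $\tau_{\min}=\tau$ (so \eqref{eq:qt} holds with $\gamma=1$), and the auxiliary discrete solution $\tilde u_h^n$ obeys
\[
\sup_{0\le t_n\le T}\|\tilde u_h^n-u(\cdot,t_n)\|_{L^\infty(I)}\le C\left(\log\frac{1}{h}\right)^{\frac{1}{2}}(h^2+\tau),
\]
with $C=C(T,\kappa_1(u),N,\beta)$, which is exactly the target estimate.

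It remains to identify $u_h^n$ with $\tilde u_h^n$. By the displayed bound, once $h$ is below a threshold $h_6$ and $\tau$ is correspondingly small, the right-hand side is smaller than $1$, whence $\|\tilde u_h^n\|_{L^\infty(I)}\le \|u(\cdot,t_n)\|_{L^\infty(I)}+1\le K$ for every $t_n\in[0,T]$. I then argue $u_h^n=\tilde u_h^n$ by induction on $n$. The base case holds because \eqref{eq:iv3} defines $u_h^0=\tilde u_h^0$ without reference to the nonlinearity. For the inductive step, if $u_h^n=\tilde u_h^n$, then the containment $|\tilde u_h^n|\le K$ gives $f(u_h^n)=\tilde f(\tilde u_h^n)$ pointwise, so $\bm{G}^n$ is identical for the two problems; as the linear system \eqref{eq:8m} and its matrix coincide, uniqueness (Theorem \ref{prop:11}) yields $u_h^{n+1}=\tilde u_h^{n+1}$. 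Hence $u_h^n$ inherits the error bound of $\tilde u_h^n$, proving the theorem.

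The step I expect to be the main obstacle is making the containment $\|\tilde u_h^n\|_{L^\infty(I)}\le K$ genuinely quantitative and uniform in $n$. This forces one to verify that the constant $C_5$ of Theorem \ref{th:s5}, specialized to $\tilde f$, depends on the nonlinearity only through $M_1,M_2$, so that $C_6$ depends solely on $T,\kappa_1(u),\gamma,N,\beta$ and not on the truncation level through some quantity that degenerates as $K$ grows, and to ensure that $\tau$ is driven to zero together with $h$ so that the $+\tau$ contribution does not break the inclusion into $[-K,K]$. Once this containment is secured, the linearly implicit structure renders the identification $u_h^n=\tilde u_h^n$ automatic, so the analytic heart of the result lies entirely in Theorem \ref{th:s5}.
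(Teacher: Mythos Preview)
Your proposal is correct and follows essentially the same route as the paper: truncate $f$ to a $C^1$ function with globally Lipschitz derivative, invoke Theorem~\ref{th:s5} for the truncated scheme (with $\delta=0$ under the uniform time partition), and then use a containment-plus-uniqueness bootstrap to identify $u_h^n$ with $\tilde u_h^n$. The paper's proof is nearly identical, taking $\mu=\kappa_1(u)+1$ in place of your $K=\|u\|_{L^\infty(Q_T)}+1$ and invoking ``the uniqueness theorem'' where you spell out the induction step via the linearly implicit structure of~\eqref{eq:8m}.
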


\subsection{Proof of Theorems \ref{th:s1} and \ref{th:s2}}
\label{sec:43}

We use the projection operator $P_A$ of $\dot{H}^1\to S_h$ associated with $A(\cdot,\cdot)$, defined for $w\in \dot{H}^1$ as 
\begin{equation}
P_Aw\in S_h,\quad A(P_{A}w -w,\chi)=0\qquad (\chi\in S_{h}) .
 \label{eq:pA}
\end{equation}

In \cite{et84} and \cite{jes78}, the \ek{following} error estimates \ek{are proved}.  

\begin{lemma}
\label{prop:tj}
Letting $w\in C^{2}(\bar{I})\cap\dot{H}^1$, and \eqref{eq:beta} be satisfied, \ek{then} for $h\le h_7=h_7(N,\beta)$, we obtain 
\begin{align}
\|P_Aw-w\| &\le Ch^{2}\|w_{xx}\|,  \label{eq:tj1} \\
\|P_Aw-w\|_{L^{\infty}(I)}&\le C\left( \log \frac{1}{h} \right)h^{2}\|w_{xx}\|_{L^{\infty}(I)}, \label{eq:tj2}
\end{align}
where $C$ is a positive constant depending only on $N$ and $\beta$. 
\end{lemma}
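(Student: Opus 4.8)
The plan is to treat \eqref{eq:tj1}--\eqref{eq:tj2} as a standard elliptic-projection error analysis adapted to the degenerate weight $x^{N-1}$, following the framework of the cited works \cite{et84,jes78}. Throughout I write $e=P_Aw-w$ and let $I_h\colon C(\bar I)\to S_h$ denote the nodal interpolant (well defined since $w\in C^2(\bar I)$, and $I_hw\in S_h$ because $I_hw(1)=w(1)=0$). The analysis rests on three ingredients: the Galerkin orthogonality $A(e,\chi)=0$ for all $\chi\in S_h$ built into the definition \eqref{eq:pA}; weighted interpolation estimates that are delicate on the first element; and an Aubin--Nitsche duality argument carried out in the weighted inner product $(\cdot,\cdot)$.

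For the weighted $L^2$ estimate \eqref{eq:tj1} I would first reduce the energy error to the interpolation error. Using orthogonality with $\chi=P_Aw-I_hw$ and the Cauchy--Schwarz inequality for $A(\cdot,\cdot)$,
\[
A(e,e)=A(e,w-I_hw)\le A(e,e)^{1/2}\,A(w-I_hw,\,w-I_hw)^{1/2},
\]
so that $A(e,e)^{1/2}\le A(w-I_hw,\,w-I_hw)^{1/2}$. The first genuine task is then the weighted interpolation bound
\[
\int_I x^{N-1}\big((w-I_hw)_x\big)^2\,dx\le Ch^2\|w_{xx}\|^2 ,
\]
proved element by element. On the interior elements the weight $x^{N-1}$ is comparable to a constant (by quasi-uniformity \eqref{eq:beta}) and the classical estimate applies after factoring it out. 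The care is needed on the first element $I_1=(0,x_1)$, where the weight degenerates: there one integrates the weight explicitly and invokes a weighted Poincar\'e-type estimate, and this is precisely where the dependence of the constant on $N$ (and hence of $h_7$ on $N,\beta$) enters. This yields $A(e,e)^{1/2}\le Ch\|w_{xx}\|$.

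Next, for the $L^2$ bound itself I would run the duality argument. Let $\psi\in\dot H^1$ solve the adjoint problem $A(v,\psi)=(e,v)$ for all $v\in\dot H^1$; since $A$ is symmetric, $\psi$ solves the same radial elliptic problem with right-hand side $e$, for which the weighted regularity estimate $\|\psi_{xx}\|\le C\|e\|$ holds (this is where the singular coefficient $(N-1)/x$ is absorbed, and where I would lean on the regularity theory of \cite{et84,jes78}). Taking $v=e$, using orthogonality and the interpolation bound applied to $\psi$,
\[
\|e\|^2=A(e,\psi)=A(e,\psi-I_h\psi)
\le A(e,e)^{1/2}\,A(\psi-I_h\psi,\psi-I_h\psi)^{1/2}
\le Ch\|w_{xx}\|\cdot Ch\|e\| ,
\]
and dividing by $\|e\|$ gives \eqref{eq:tj1}.

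The hard part is \eqref{eq:tj2}, the $L^\infty$ estimate with its $\log(1/h)$ loss. Here I would adopt the weighted-norm (discrete Green's function) technique of Schatz and Wahlbin, as carried out for this degenerate operator in \cite{et84}. Fix $x_0\in\bar I$ realizing $|e(x_0)|=\|e\|_{L^\infty(I)}$, introduce a smoothed Dirac mass at $x_0$ and the associated continuous and discrete Green's functions, and represent $|e(x_0)|$ through the bilinear form, exploiting orthogonality to replace the discrete Green's function by its interpolation error. The entire difficulty is to bound the Green's function in suitable weighted norms uniformly up to the logarithmic factor; the $\log(1/h)$ is exactly the price of the near-origin behaviour of the fundamental solution of the radial Laplacian (logarithmic for $N=2$, controlled through the weight for general $N$). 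I expect this weighted Green's-function bound near $x=0$ to be the central technical obstacle, and for the rigorous details I would cite the stability and regularity estimates established in \cite{et84,jes78} rather than reproduce them.
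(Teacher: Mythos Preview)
The paper does not actually prove this lemma: immediately before stating it, the authors write ``In \cite{et84} and \cite{jes78}, the following error estimates are proved,'' and then simply quote the result without argument. So there is no proof in the paper to compare your proposal against.

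That said, your sketch is a faithful outline of the method used in the cited references: Galerkin orthogonality plus weighted interpolation for the energy estimate, Aubin--Nitsche duality with weighted elliptic regularity for \eqref{eq:tj1}, and a discrete Green's function / weighted-norm argument in the style of Schatz--Wahlbin for \eqref{eq:tj2}. You correctly identify the first element $I_1$ as the place where the degenerate weight $x^{N-1}$ requires care, and you are right that the $\log(1/h)$ factor comes from the Green's-function bounds near the origin. One small caveat: in the duality step, the regularity estimate $\|\psi_{xx}\|\le C\|e\|$ is not entirely routine because the operator is $-(x^{N-1}\psi_x)_x/x^{N-1}$ with a singular coefficient, and the argument in \cite{jes78} actually proceeds via a weighted $H^2$-type bound that needs the structure of the radial Laplacian; you acknowledge this by deferring to the cited works, which is exactly what the paper does. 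For the purposes of this paper, citing \cite{et84,jes78} is the intended ``proof.''
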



\begin{proof}[Proof of Theorem \ref{th:s1}]
Using $P_Au$, we distribute the error in the form \ek{shown below.} 
\[
u_{h}^{n}-u(t_{n})=\underbrace{(u_{h}^{n}- P_{A}u(t_{n}))}_{=\theta^{n}} + \underbrace{(P_{A}u(t_{n}) - u(t_{n}))}_{=\rho^{n}}\\ 
\]
 \ek{From \eqref{eq:tj1}, it is known} that
\begin{equation}
\label{eq:th1.12}
\|\rho^{n}\| \le Ch^{2}\|u_{xx}(t_n)\| \le Ch^2 \|u_{xx}\|_{L^\infty(Q_T)}.
\end{equation}
\ek{Next} we derive \ek{an estimate} for $\theta^n$.  
By considering the symmetric weak form \eqref{eq:w1} at $t=t_{n+1}$, we \ek{obtain} 
\begin{multline*}
\left(\partial_{\tau_n} u(t_{n+1}),\chi\right) +A(P_{A}u(t_{n+1}),\chi)= (f(u(t_{n})),\chi)
                                                      \\
+( f(u(t_{n+1}))-f(u(t_{n})),\chi)           
+\left( \partial_{\tau_n}u(t_{n+1})-u_{t}(t_{n+1}), \chi\right)
\end{multline*}
which, together with \eqref{eq:3}, implies that
\begin{multline}
\label{eq:th1.10}
\left( \partial_{\tau_n}\theta^{n+1},\chi\right)+A(\theta^{n+1},\chi)
=(f(u_h^n)-f(u(t_{n})),\chi) \\
 - (f(u(t_{n+1}))-f(u(t_{n})),\chi)
 - \left(\partial_{\tau_n}u(t_{n+1})-u_{t}(t_{n+1}),\chi \right)-\left(\partial_{\tau_n}\rho^{n+1},\chi \right).
\end{multline}
Substituting this \ek{expression} for $\chi=\theta^{n+1}$ \ek{yields the following:}
\begin{multline*}
\frac{1}{\tau_{n}}\left\{ \|\theta^{n+1}\|^{2}-\|\theta^{n}\|
\cdot\|\theta^{n+1}\|\right\} 
\le M\|\theta^{n}+\rho^{n}\|\cdot\|\theta^{n+1}\| \\
+ M\tau_{n}\|u_{t}\|_{L^{\infty}(Q_T)}\cdot\|\theta^{n+1}\|
+C\tau_{n} \|u_{tt}\|_{L^{\infty}(Q_T)}\|\theta^{n+1}\|
+\left\| \partial_{\tau_n}\rho^{n+1}\right\|\cdot\|\theta^{n+1}\|.
\end{multline*}
Correspondingly, \ek{because} 
\[
 \partial_{\tau_n}\rho^{n+1}
=P_{A}\left(\frac{u(t_{n+1})-u(t_{n})}{\tau_{n}}\right) -\frac{u(t_{n+1})-u(t_{n})}{\tau_{n}},
\]
we provide an estimate 
\begin{equation}
 \label{eq:th1.11}
\left\| \partial_{\tau_n}\rho^{n+1}\right\|
\le Ch^{2}\left\|\frac{u_{xx}(t_{n+1})-u_{xx}(t_{n})}{\tau_{n}}\right\|\\
\le Ch^{2}\|u_{xxt}\|_{L^{\infty}(Q_T)}.
\end{equation}
To sum up, we obtain  
\[
\|\theta^{n+1}\|-\|\theta^{n}\| \le 
\tau_{n}M\|\theta^{n}\| \\
+ Ch^{2}M\tau_{n}+ CM\tau_{n}^{2}
+C\tau_{n}^{2} +Ch^{2}\tau_{n}.
\]
\ek{Therefore,} 
\begin{align}
 \|\theta^{n}\| 
&\le  e^{MT}\|u_{h}^{0}-P_{A}u^{0}\|+C\frac{e^{MT}-1}{M}(\tau +h^{2}) \nonumber \\
&\le  e^{MT}(\|u_{h}^{0}-u^{0}\|+\|u^{0}-P_{A}u^{0}\|)+C\frac{e^{MT}-1}{M}(\tau +h^{2}) \nonumber \\ 
&\le  C'(\tau +h^{2}), 
\label{eq:th1.14}
\end{align}
where $C'=C'(T,\kappa_0(u),M,N,\beta,C_{0})>0$. 
\ek{By combining} this expression with \eqref{eq:th1.12}, \ek{one can} deduce the desired error estimate. 
\end{proof}

\begin{proof}[Proof of Theorem \ref{th:s2}]
We use the same error decomposition process as \ek{that used} in the previous proof where 
$u_{h}^{n}-u(t_{n})=\theta^{n}+\rho^{n}$\ek{. Also, we} apply \eqref{eq:tj2} to estimate $\|\rho^{n}\|_{L^{\infty}(I)}$. \ek{Because}  
\begin{equation}
 \label{eq:th2.23}
 \|\theta^{n}\|_{L^{\infty}(\sigma,1)}\le\|\theta^{n}_{x}\|_{L^{1}(\sigma,1)}
\le C(\sigma,N)\|\theta^{n}_{x}\|,
\end{equation}
we perform an estimation for $\|\theta^{n}_{x}\|$. 

Substituting \eqref{eq:th1.10} for $\chi=\partial_{\tau_n}\theta^{n+1}$, we \ek{obtain the following.}                                                     
\begin{multline*}                                                        
\left\|\partial_{\tau_n}\theta^{n+1}\right\|^{2}
+A(\theta^{n+1},\partial_{\tau_n}\theta^{n+1})
\le  M\|\theta^{n}\|\cdot\left\|\partial_{\tau_n}\theta^{n+1}\right\|\\
+M\|\rho^{n}\|\cdot \left\|\partial_{\tau_n}\theta^{n+1}\right\|
+M\tau_{n}\|u_{t}\|_{L^{\infty}(Q_T)}\cdot\left\|\partial_{\tau_n}\theta^{n+1} \right\|  \\
+\|u_{tt}\|_{L^{\infty}(Q_T)}\tau_{n}\left\|\partial_{\tau_n}\theta^{n+1}\right\|+\left\|\partial_{\tau_n}\rho^{n+1}\right\|\cdot\left\|\partial_{\tau_n}\theta^{n+1}\right\|                                                         
\end{multline*}
                                                               
Correspondingly, we apply \ek{the} elementary identity \ek{shown below} 
\begin{align*}
A\left(\theta^{n+1},\partial_{\tau_n}\theta^{n+1}\right) 
&=  
\frac12 A\left(\theta^{n+1}-\theta^n+\theta^{n+1}+\theta^n,\partial_{\tau_n}\theta^{n+1}\right)  \\
&\ge 
\frac{1}{2\tau_n}\left[ 
A\left(\theta^{n+1}, \theta^{n+1}\right)-
A\left(\theta^{n}, \theta^{n}\right) \right]
\end{align*}
along with Young's inequality \ek{to} obtain  
\begin{multline*}
\frac{1}{2\tau_{n}}\left[A(\theta^{n+1},\theta^{n+1})- A(\theta^{n},\theta^{n})\right] 
\le
\frac{1}{2}\frac{M^{2}}{\delta_{0}^{2}}\|\theta^{n}\|^{2}
+\frac{1}{2}\delta_{0}^{2}\left\|\partial_{\tau_n}\theta^{n+1}\right\|^{2} \\
+\frac{1}{2}\frac{M^{2}}{\delta_{1}^{2}}\|\rho^{n}\|^{2} 
+\frac{1}{2}\delta_{1}^{2}\left\|\partial_{\tau_n}\theta^{n+1}\right\|^{2} 
+\frac{1}{2}\frac{C^{2}}{\delta_{2}^{2}}\tau_{n}^{2}
+\frac{1}{2}\delta_{2}^{2}\left\|\partial_{\tau_n}\theta^{n+1}\right\|^{2} \\
+ \frac{1}{2}\left\|\partial_{\tau_n}\rho^{n+1}\right\|^{2}                               +\frac{1}{2}\left\|\partial_{\tau_n}\theta^{n+1}\right\|^{2}
-\left\|\partial_{\tau_n}\theta^{n+1} \right\|^{2}, 
\end{multline*}
where $\delta_0,\delta_1,\delta_2>0$ are constants.     
After setting $\delta_{0}^{2}+\delta_{1}^{2}+\delta_{2}^{2}=1$, we \ek{obtain} 
\[
A(\theta^{n+1},\theta^{n+1})-A(\theta^{n},\theta^{n})\le 
\tau_{n}\left[\frac{C^{2}}{\delta_{0}^{2}}\|\theta^{n}\|^{2}+\frac{C^{2}}{\delta_{1}^{2}}\|\rho^{n}\|^{2}+\left\|\partial_{\tau_n}\rho^{n+1}\right\|^{2}+\frac{C^{2}}{\delta_{2}^{2}}\tau^2\right].
\]
Therefore, 
\[
A(\theta^{n},\theta^{n})\le A(\theta^0,\theta^0)+C^2t_{n} \sup_{1\le k\le n}\left[\|\theta^{k-1}\|^{2}+\|\rho^{k-1}\|^{2}+\left\|\partial_{\tau_{k-1}}\rho^{k} \right\|^{2}+\tau^2\right] .
\]
Consequently, using \eqref{eq:iv2}, \eqref{eq:th1.11}, and \eqref{eq:th1.14}, we deduce 
\[
\|\theta^{n}_{x}\| \le C t_{n}^{\frac{1}{2}}\left(\tau+h^{2}\right).
\]
This, together with \eqref{eq:tj2} and \eqref{eq:th2.23}, implies the desired estimate. 
\end{proof}

\subsection{Proof of Theorems \ref{th:s3} and \ref{th:s4} }
\label{sec:45}

For the proof, we \ek{use} the inverse inequality that follows.

\begin{lemma}[Inverse inequality]
\label{la:ie}
Under condition \eqref{eq:beta},  
\[
\|v_{h}\|_{L^{\infty}(I)}\le C_{\star}h^{-\frac{N}{2}}\|v_{h}\| \qquad (v_h\in S_h),
\]
where $C_{\star}$ is a positive constant depending only on $N$ and $\beta$. 
\end{lemma}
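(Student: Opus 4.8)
The plan is to exploit the elementary structure of $S_h$: every $v_h\in S_h$ is continuous, piecewise linear, and vanishes at $x=1$, so its $L^\infty$ norm is attained at one of the interior nodes, $\|v_h\|_{L^\infty(I)}=|v_h(x_i)|$ for some $i$ with $0\le i\le m-1$. Writing $V=v_h(x_i)$ for such a maximizing node, it suffices to produce a lower bound of the form $\|v_h\|^2\ge c\,h^{N}|V|^2$ with $c=c(N,\beta)>0$; rearranging then gives $|V|\le C_\star h^{-N/2}\|v_h\|$. I would obtain that lower bound by controlling the weighted integral $\|v_h\|^2=\int_I x^{N-1}v_h^2\,dx$ from below on a single element adjacent to $x_i$, rather than over all of $I$.

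The first step is a pointwise lower bound on $|v_h|$ near the maximizing node. Since $i\le m-1$, the element $I_{i+1}=(x_i,x_{i+1})$ exists, and $v_h$ is linear there, running from $V$ at $x_i$ to $W=v_h(x_{i+1})$ with $|W|\le|V|$ because $|V|$ is the nodal maximum. Assuming $V\ge 0$ without loss of generality, we have $V-W\le 2V$, so on the sub-interval $J_i=(x_i,x_i+h_{i+1}/4)$ the linear function decreases by at most $(V-W)\tfrac{x-x_i}{h_{i+1}}\le 2V\cdot\tfrac14=\tfrac{V}{2}$, whence $|v_h(x)|\ge |V|/2$ throughout $J_i$.

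The second step converts this into a weighted $L^2$ lower bound:
\[
\|v_h\|^2\ge \int_{J_i} x^{N-1}v_h^2\,dx\ge \frac{|V|^2}{4}\int_{J_i}x^{N-1}\,dx,
\]
so it remains to bound $\int_{J_i}x^{N-1}\,dx$ below uniformly in $i$. Here quasi-uniformity \eqref{eq:beta}, which yields $h_{i+1}\ge h/\beta$, is used decisively. For $i\ge 1$ one has $x_i\ge h_i\ge h/\beta$, hence $\int_{J_i}x^{N-1}\,dx\ge x_i^{N-1}\,|J_i|\ge (h/\beta)^{N-1}\cdot h/(4\beta)$; for $i=0$ the same quantity equals $\tfrac1N(h_1/4)^N\ge \tfrac1N(h/(4\beta))^N$. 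In both cases $\int_{J_i}x^{N-1}\,dx\ge c_N\beta^{-N}h^N$ with $c_N$ depending only on $N$, and substituting back gives the claim with $C_\star=C_\star(N,\beta)$.

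The main obstacle is precisely the degeneracy of the weight $x^{N-1}$ at the origin. The node $i=0$ sits where the weight vanishes, so the crude estimate $\int_{J_i}x^{N-1}\,dx\ge x_i^{N-1}|J_i|$ is vacuous there and must be replaced by the exact value $\tfrac1N(h_1/4)^N$, which still behaves like $h^N$ thanks to $h_1\ge h/\beta$. This is exactly where the $N$-dimensional scaling $h^{-N/2}$ arises, in contrast with the one-dimensional $h^{-1/2}$, reflecting that $\|\cdot\|$ mimics the $L^2$ norm over the $N$-dimensional ball; every other step is a routine, $\beta$-controlled computation.
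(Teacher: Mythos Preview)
Your argument is correct. Both your proof and the paper's localize to a single element adjacent to the point where the $L^\infty$ norm is attained and show that the weighted $L^2$ mass there already scales like $h^{N}\|v_h\|_{L^\infty}^2$, so the overall strategy is the same.

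The tactics differ in two places. First, the paper invokes the standard finite-dimensional norm equivalence $\|v_h\|_{L^\infty(I_j)}\le C_{\star\star}h_j^{-1/2}\|v_h\|_{L^2}$ on (a sub-interval of) the maximizing element and then inserts the weight, whereas you avoid that machinery by producing the explicit pointwise bound $|v_h|\ge|V|/2$ on a quarter-element; your route is slightly more elementary and self-contained. Second, and more interestingly, the two proofs handle the weight degeneracy at $x=0$ in opposite ways: the paper \emph{moves away} from the origin, restricting to $(h_1/2,h_1)$ so that $x^{-(N-1)}\le (h_1/2)^{-(N-1)}$, while you integrate \emph{through} the origin on $(0,h_1/4)$ and rely on the exact value $\tfrac{1}{N}(h_1/4)^N$. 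Both choices recover the $h^N$ scaling via $h_1\ge h/\beta$, so neither has an advantage in the final constant, but it is worth noting that either direction works.
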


\begin{proof}
Let $v_h\in S_h$ be arbitrary. 
 From the norm equivalence in $\mathbb{R}^2$, we know that 
\begin{align*}
 \|v_h\|_{L^\infty(I_1)} &\le C_{\star\star}h_1^{-1/2}\|v_h\|_{L^2(\frac{h_1}{2},h_1)},\\
 \|v_h\|_{L^\infty(I_j)} &\le C_{\star\star}h_j^{-1/2}\|v_h\|_{L^2(I_j)}\quad (j=2,\ldots,m),
\end{align*}
where $C_{\star\star}$ denotes the absolute positive constant. Given that $\|v_h\|_{L^\infty(I)}=\|v_h\|_{L^\infty(I_1)}$, the expression \ek{is calculable} as 
\begin{align*}
 \|v_h\|_{L^\infty(I_1)}^2
&\le C_{\star\star}^2h_1^{-1}\int_{h_1/2}^{h_1} x^{-(N-1)}x^{N-1}v_h^2~dx\\
&\le C_{\star\star}^2h_1^{-1}\left(\frac{h_1}{2}\right)^{-(N-1)}\int_{h_1/2}^{h_1} x^{N-1}v_h^2~dx\\
&\le C_{\star\star}^22^{N-1}h^{-N}\left(\frac{h_1}{h}\right)^{-N}\int_{h_1/2}^{h_1} x^{N-1}v_h^2~dx\\
&\le C_{\star}^2 h^{-N} \|v_h\|^2.
\end{align*}
The case $\|v_h\|_{L^\infty(I)}=\|v_h\|_{L^\infty(I_j)}$ with $j=2,\ldots,m$ is examined similarly. 
\end{proof}

\begin{proof}[Proof of Theorem \ref{th:s3}]
Consider \eqref{eq:1} and (Sym) with replacement $f(s)$ in
\[
\tilde{f}(s)=
\begin{cases}
f(\mu) & (s\ge \mu)\\
f(s)& (-\mu\le s\le\mu)\\
f(-\mu) & (s\le -\mu),
\end{cases}
\]
where $\mu>0$ is determined later. Then, $\tilde{f}$ satisfies condition \eqref{eq:f3} in Theorem \ref{th:s1} such that 
\[
\sup_{s,s'\in\mathbb{R},s\ne s'}\frac{|\tilde{f}(s)-\tilde{f}(s')|}{|s-s'|}\le M\equiv \sup_{|\lambda|\le \mu}M_\lambda<\infty.
\]              
Let $\tilde{u}$ and $\tilde{u}_{h}^{n}$ be the solutions of \eqref{eq:1} and (Sym) with $\tilde{f}$, respectively, such that 

\[
\|\tilde{u}_{h}^{n}\|_{L^{\infty}(I)}\le\|\theta^{n}\|_{L^{\infty}(I)}+\|P_{A}\tilde{u}(t_{n})\|_{L^{\infty}(I)},
\]            
where $\theta^{n}=\tilde{u}_{h}^{n}-P_{A}\tilde{u}(t_{n})$ and $\rho^{n}= P_{A}\tilde{u}(t_{n})-\tilde{u}(t_{n})$.                
Applying Theorem \ref{th:s1} to $\tilde{u}$ and $\tilde{u}_{h}^{n}$, 
\ek{one obtains} 
\begin{equation}
\label{eq:s2.1}
\sup_{0\le t_n\le T}\|\tilde{u}_{h}^{n}-\tilde{u}(\cdot,t_{n})\|\le C_2(h^{2}+\tau),
\end{equation}
where $C_2=C_2(T,\kappa_0(\tilde{u}),\mu,C_0, N,\beta)$.  
Moreover, \ek{an} estimate \eqref{eq:th1.14} for $\theta^n$ is available. In view of Lemmas \ref{prop:tj} and \ref{la:ie}, we determine \ek{those estimates as} 
\begin{align*}
\|\theta^{n}\|_{L^{\infty}(I)}
& \le C_\star h^{-\frac{N}{2}}\|\theta^{n}\| \le C_3h^{-\frac{N}{2}}(h^{2}+\tau),\\
\|\rho\|_{L^{\infty}(I)}&\le C_4\left(h^{2}\log\frac{1}{h}\right)\|\tilde{u}_{xx}(t_{n})\|_{L^\infty(I)},
\end{align*}
where $C_3=C_3(T,\kappa_0(\tilde{u}),\mu,C_0,N,\beta)$ and $C_4=C_4(N,\beta)$. 
Therefore, we have 
\[
\|P_{A}\tilde{u}(t_{n})\|_{L^{\infty}(I)}\le\|\tilde{u}(t_{n})\|_{L^{\infty}(I)}+C_5\left(h^{2}\log\frac{1}{h}\right)\|\tilde{u}_{xx}(t_{n})\|_{L^{\infty}(I)}
\]
and 
\[
\|\tilde{u}_{h}^{n}\|_{L^{\infty}(I)}\le C_3(h^{2-\frac{N}{2}} +h^{-\frac{N}{2}}\tau)+\|\tilde{u}(t_{n})\|_{L^{\infty}(I)}+C_4\left(h^{2}\log\frac{1}{h}\right)\|\tilde{u}_{xx}(t_{n})\|_{L^{\infty}(I)}.
\]

At this stage, we set $\mu=1+\|u\|_{L^{\infty}(Q_T)}$ to obtain $u=\tilde{u}$ in $Q_T$ by uniqueness. Moreover, \ek{because} $N<4$, we can take a very small $h$ such that 
\[
C_{6}(h^{2-\frac{N}{2}}+h^{-\frac{N}{2}}\tau)\le \frac{1}{2},\quad 
C_{5}\left(h^{2}\log\frac{1}{h}\right)\|u_{xx}(t_{n})\|_{L^{\infty}(I)}\le \frac{1}{2}.                           
\]
Consequently, $\|\tilde{u}_{h}^{n}\|_{L^{\infty}(I)}\le \mu$\ek{. Also}, by \ek{uniqueness} $u_h^n=\tilde{u}_h^n$. Therefore, \eqref{eq:s2.1} implies the desired conclusion. 
\end{proof}

\begin{proof}[Proof of Theorem \ref{th:s4}]
The proof follows the exact same \ek{pattern} as \ek{that} for Theorem \ref{th:s3}\ek{, but}
using Theorem \ref{th:s2} 
instead of Theorem \ref{th:s1}.  
\end{proof}

\subsection{Proof of Theorems \ref{th:s5} and \ref{th:s6} }
\label{sec:48}

We use the projection operator $P_B$ of $\dot{H}^1\to S_h$ associated with $B(\cdot,\cdot)$: 
\begin{equation}
B(P_{B}w -w,\chi)=0\qquad (\chi\in S_{h}) .
 \label{eq:pB}
\end{equation}

In \cite{et84}, the following error estimates are proved.  

\begin{lemma}
\label{prop:tj3}
Letting $w\in C^{2}(\bar{I})\cap\dot{H}^1$ and \eqref{eq:beta} \ek{be} satisfied, \ek{then} for $h\le h_8=h_8(N,\beta)$ we obtain 
\begin{equation}
\label{eq:tj3}
\|P_Bw-w\|_{L^{\infty}(I)}\le C_8 h^{2}\|w_{xx}\|_{L^{\infty}(I)},
\end{equation}
where $C_8=C_8(N,\beta)$. 
\end{lemma}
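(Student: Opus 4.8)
The plan is to deduce the maximum-norm estimate from the $h$-uniform $L^\infty$-stability of the projection $P_B$ together with the standard nodal interpolation bound. Write $I_hw\in S_h$ for the piecewise linear interpolant of $w$ at the nodes $x_0,\dots,x_m$. Since $P_B$ acts as the identity on $S_h$, one has $P_Bw-w=P_B(w-I_hw)+(I_hw-w)$, and the interpolation term is controlled by $\|I_hw-w\|_{L^\infty(I)}\le Ch^2\|w_{xx}\|_{L^\infty(I)}$. Hence everything reduces to the claim $\|P_Bv\|_{L^\infty(I)}\le C\|v\|_{L^\infty(I)}$ with $C=C(N,\beta)$ independent of $h$: applying it to $v=w-I_hw$ gives $\|P_B(w-I_hw)\|_{L^\infty(I)}\le Ch^2\|w_{xx}\|_{L^\infty(I)}$, and the triangle inequality finishes the bound.

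To prove the stability I would use a regularized discrete Green's function argument for the \emph{adjoint} problem, which is the natural device since $B(\cdot,\cdot)$ is nonsymmetric. Because $P_Bv$ is piecewise linear, $\|P_Bv\|_{L^\infty(I)}=|P_Bv(z)|$ for some node $z$. Fix a smooth regularized mass $\tilde\delta_z$ normalized by $\dual{\tilde\delta_z,\chi}=\chi(z)$ for all $\chi\in S_h$, and let $G_z\in\dot H^1$ solve the adjoint problem $B(\chi,G_z)=\dual{\tilde\delta_z,\chi}$ for all $\chi\in\dot H^1$, with its Galerkin projection $G_z^h\in S_h$ defined by $B(\chi,G_z-G_z^h)=0$ for $\chi\in S_h$. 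Using the Galerkin orthogonality $B(P_Bv-v,\chi)=0$ and $\dual{\tilde\delta_z,P_Bv}=P_Bv(z)$, one obtains the representation $P_Bv(z)=B(P_Bv,G_z^h)=B(v,G_z^h)$, so that $|P_Bv(z)|\le |B(v,G_z)|+|B(v,G_z^h-G_z)|$. The first term equals $|\dual{\tilde\delta_z,v}|$ and is bounded by $\|v\|_{L^\infty(I)}$ directly, while the second, after integration by parts transferring derivatives onto $G_z^h-G_z$, is estimated by $\|v\|_{L^\infty(I)}$ times an $h$-weighted $L^1$/total-variation norm of $x(G_z^h-G_z)_x$.

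The crux — and the step I expect to be the main obstacle — is therefore the $h$-\emph{uniform, logarithm-free} a priori bound on the (discrete) Green's function, i.e. a weighted $W^{2,1}$-type estimate for $G_z$ and the matching approximation bound for $G_z-G_z^h$, valid up to the degenerate endpoint $x=0$. Here the favorable structure of the nonsymmetric form is decisive: its coercivity \eqref{eq:bc2} controls $\vnorm{w_x}^2$ with the weight $x$, so the associated two-point problem behaves like a \emph{nondegenerate} one-dimensional problem in the energy norm, whose Green's functions admit $h$-uniform bounds with no logarithmic growth. This is exactly what fails for the strongly degenerate symmetric weight $x^{N-1}$, where the analogous argument produces the factor $\log(1/h)$ appearing in \eqref{eq:tj2}. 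Controlling the boundary contribution at $x=0$ generated by the first-order term of $B$ (the source of the $\tfrac{N-2}{2}w(0)^2$ term in \eqref{eq:bc2}) and handling the regularized mass near the origin are the delicate technical points; once the Green's function bound is in hand, combining it with the interpolation estimate for $w$ yields $\|P_Bw-w\|_{L^\infty(I)}\le C_8h^2\|w_{xx}\|_{L^\infty(I)}$.
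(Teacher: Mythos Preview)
The paper does not supply its own proof of this lemma: it is quoted directly from \cite{et84} (see the sentence immediately preceding the statement). So there is no in-paper argument to compare against.

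Your outline is in fact the standard route used for this estimate: reduce to $L^\infty$-stability of $P_B$ via $P_Bw-w=P_B(w-I_hw)+(I_hw-w)$, then prove stability through a (regularized) discrete Green's function for the adjoint problem. Your identification of the reason the log factor disappears is correct and is the point of the nonsymmetric formulation: the weight in $\vnorm{\cdot}$ is $x$ rather than $x^{N-1}$, so the one-dimensional Green's function bounds are uniform in $h$.

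That said, what you have written is a strategy, not a proof. The step you flag as ``the main obstacle'' --- the $h$-uniform, logarithm-free weighted bound on $G_z$ and on $G_z-G_z^h$ --- is precisely the substantive content of the lemma, and you have not carried it out. In particular you would need to (i) construct $\tilde\delta_z$ so that $\int_I x|\tilde\delta_z|\,dx$ is bounded independently of $h$ and of the location of $z$ (including $z$ near $0$), and (ii) make precise the ``integration by parts'' estimate of $B(v,G_z^h-G_z)$, which as written would require a weighted $W^{1,1}$ bound on $G_z^h-G_z$ that you have not established. These are exactly the computations done in \cite{et84}; if you want a self-contained proof you should reproduce them rather than assert them.
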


We also use a version of Poincar\'{e}'s inequality (see \cite[Lemma 18.1]{tho06}).

\begin{lemma}
\label{la:p}
We have 
\begin{equation}
\label{eq:po}
\vnorm{w}\le \vnorm{w_{x}}\qquad (w\in \dot{H}(I)).
\end{equation}
\end{lemma}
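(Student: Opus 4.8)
The plan is to prove the weighted inequality $\vnorm{w}\le\vnorm{w_x}$ directly, exploiting the boundary condition $w(1)=0$ built into $\dot H^1$ together with an integration by parts that uses $x^2/2$ as the antiderivative of the weight $x$. Since it suffices to establish the estimate for $w$ in a dense subclass, say $C^1(\bar I)\cap\dot H^1$, and then pass to the limit, I may assume $w$ is as smooth as needed; recall also that in one dimension $H^1(I)\hookrightarrow C(\bar I)$, so $w$ is continuous up to the endpoints and the value $w(1)=0$ is meaningful.

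First I would write
\[
\vnorm{w}^2=\int_I x w^2\,dx=\left[\frac{x^2}{2}w^2\right]_0^1-\int_I x^2 w w_x\,dx=-\int_I x^2 w w_x\,dx,
\]
where the boundary terms vanish: at $x=1$ because $w(1)=0$, and at $x=0$ because $x^2w(x)^2\to 0$ as $x\to 0$ (here the boundedness of $w$ near the origin is what renders the singular endpoint harmless).

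Next I would estimate the right-hand side by the Cauchy--Schwarz inequality after splitting the weight as $x^2=x^{1/2}\cdot x^{3/2}$:
\[
-\int_I x^2 w w_x\,dx\le\int_I \left(x^{1/2}|w|\right)\left(x^{3/2}|w_x|\right)dx\le\left(\int_I x w^2\,dx\right)^{1/2}\left(\int_I x^3 w_x^2\,dx\right)^{1/2}.
\]
Since $x^3\le x$ on $[0,1]$, the second factor is bounded by $\vnorm{w_x}$, which gives $\vnorm{w}^2\le\vnorm{w}\,\vnorm{w_x}$; dividing by $\vnorm{w}$ (the case $\vnorm{w}=0$ being trivial) yields the claim.

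The computation is elementary, so the only genuine point requiring care---the main obstacle, such as it is---is the justification of the integration by parts and the vanishing of the boundary contribution at the singular endpoint $x=0$; I would handle this by the density argument above, noting that the weight $x$ degenerates there but the factor $x^2$ more than compensates. Alternatively, one may simply invoke \cite[Lemma 18.1]{tho06}, as indicated in the statement.
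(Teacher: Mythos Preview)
Your proof is correct. The paper does not supply its own argument for this lemma; it merely cites \cite[Lemma~18.1]{tho06}, exactly as you note in your closing sentence. Your integration-by-parts computation with the antiderivative $x^{2}/2$, followed by Cauchy--Schwarz and the trivial bound $x^{3}\le x$ on $[0,1]$, is a clean self-contained proof and is in fact the standard way to establish such weighted Poincar\'e inequalities with constant~$1$.
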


We \ek{can now} state the proof that follows. 

\begin{proof}[Proof of Theorem \ref{th:s5}]
Using $P_{B}u(t)\in S_{h}$, we decompose the error into
\[
u_{h}^{n}-u(t_{n})=\underbrace{(u_{h}^{n}- P_{B}u(t_{n}))}_{=\theta^{n}} + \underbrace{(P_{B}u(t_{n}) - u(t_{n}))}_{=\rho^{n}}. 
\]
We know from \eqref{eq:tj3} that
\begin{subequations} 
\label{eq:tj3aa}
\begin{align}
\vnorm{\rho^n}& \le \|\rho^{n}\|_{L^{\infty}(I)}  \le 
Ch^{2}\|u_{xx}\|_{L^\infty(Q_T)}, \label{eq:tj3a}\\
\vnorm{\partial_{\tau_n}\rho^{n+1}}&\le \|\partial_{\tau_n}\rho^{n+1}\|_{L^{\infty}(I)}  \le Ch^{2}\|u_{xxt}\|_{L^\infty(Q_T)}. \label{eq:tj3b}
\end{align}
\end{subequations}

\ek{Therefore}, we will \ek{specifically examine estimation of} $\vnorm{\theta^{n}_{x}}$ \ek{because} we are aware that
\[
\|\chi\|_{L^{\infty}(I)}\le\|\chi_{x}\|_{L^{1}(I)}\le C\left(\log\frac{1}{h}\right)^{\frac{1}{2}}\vnorm{\chi_{x}}\qquad (\chi\in S_{h}).
\]
Furthermore, \eqref{eq:w2} and \eqref{eq:8} give 
\begin{multline}
\label{eq:s5.1}
\dual{\partial_{\tau_n}\theta^{n+1}+\partial_{\tau_n}\rho^{n+1},\chi} 
+B(\theta^{n+1},\chi)=
\dual{f(u_{h}^{n})-f(u(t_{n})),\chi}\\
-\dual{f(u(t_{n+1}))-f(u(t_{n})),\chi}
-\dual{\partial_{\tau_n} u(t_{n+1})-u_{t}(t_{n+1}),\chi}
\end{multline}
for $\chi\in S_h$. 
Substituting this for $\chi=\theta^{n+1}$, we have 
\begin{multline}
\label{eq:s5.2}
\dual{\partial_{\tau_n}\theta^{n+1},\theta^{n+1}} 
+B(\theta^{n+1},\theta^{n+1})\\
=\dual{f(u_{h}^{n})-f(u(t_{n})),\theta^{n+1}}
-\dual{f(u(t_{n+1}))-f(u(t_{n})),\theta^{n+1}}\\
-\dual{\partial_{\tau_n} u(t_{n+1})-u_{t}(t_{n+1}),\theta^{n+1}}
-\dual{\partial_{\tau_n}\rho^{n+1},\theta^{n+1}}.
\end{multline}
This, together with \eqref{eq:bc2}, implies that
\begin{multline*}
\vnorm{\theta^{n+1}_{x}}^2\le 
M\vnorm{u_{h}^{n}-u(t_{n})} \cdot \vnorm{\theta^{n+1}} \\
+M\vnorm{u(t_{n+1})-u(t_{n})}\cdot \vnorm{\theta^{n+1}}
+\vnorm{\partial_{\tau_n} u(t_{n+1})-u_{t}(t_{n+1})}\cdot \vnorm{\theta^{n+1}}\\
+\vnorm{\partial_{\tau_n}\rho^{n+1}}\cdot \vnorm{\theta^{n+1}}
+\vnorm{\partial_{\tau_n}\theta^{n+1}}\cdot \vnorm{\theta^{n+1}}.
\end{multline*}
Therefore, using \eqref{eq:po}, we deduce \ek{that}
\begin{align}
\vnorm{\theta^{n+1}_{x}} 
& \le M\vnorm{u_{h}^{n}-u(t_{n})}  +M\vnorm{u(t_{n+1})-u(t_{n})}\nonumber \\
& { }\qquad  +\vnorm{\partial_{\tau_n} u(t_{n+1})-u_{t}(t_{n+1})}
+\vnorm{\partial_{\tau_n}\rho^{n+1}}
+\vnorm{\partial_{\tau_n}\theta^{n+1}}\nonumber\\
& \le M(\vnorm{\theta^n}+\vnorm{\rho^n})  +M\tau_n\|u_t\|_{L^\infty(Q_T)}\nonumber\\
& { }\qquad +\tau_{n} \|u_{tt}\|_{L^{\infty}(Q_T)}
+\vnorm{\partial_{\tau_n}\rho^{n+1}}
+\vnorm{\partial_{\tau_n}\theta^{n+1}}.
\label{eq:s5.4}
\end{align}
These estimates actually hold\ek{. Nevertheless}, their proof \ek{is} postponed for Appendix \ref{sec:a1}: 
\begin{subequations} 
\label{eq:a1}
\begin{align}
\vnorm{\theta^n}& \le C (h^2+\tau), \label{eq:a11}\\
\vnorm{\partial_{\tau_n}\theta^{n+1}} &\le 
C\left(h^2+\tau+\frac{\delta}{\tau}\right). \label{eq:a12}
\end{align}
\end{subequations}
Using \eqref{eq:tj3a}, 
\eqref{eq:tj3b}, 
\eqref{eq:a11}, and 
\eqref{eq:a12}, we deduce 
\[
 \vnorm{\theta^{n+1}_{x}} \le C\left(h^2+\tau+\frac{\delta}{\tau}\right),
\] 
which completes the proof of Theorem \ref{th:s5}. 
\end{proof}

Finally, we state the \ek{following} proof. 

\begin{proof}[Proof of Theorem \ref{th:s6}]
Consider problems \eqref{eq:1} and \eqref{eq:8} 
with replacement~$f(s)=s|s|^{\alpha}$ by
\[
\tilde{f}(s)=
\begin{cases}
s|s|^{\alpha}&(|s|\le\mu)\\
[(1+\alpha)\mu^{\alpha}s-\alpha\mu^{1+\alpha}] \operatorname{sgn} (s)&(|s|\ge\mu),
\end{cases}
\]
where $\mu>0$ is determined later. Then, 
$\tilde{f}$ is a $C^1$ function and the corresponding values of $\tilde{M}_1$ and $\tilde{M}_2$ in (f4) are expressed as 
$\tilde{M}_1=(1+\alpha)\mu^{\alpha}$ and $\tilde{M}_2=(1+\alpha)\alpha\mu^{\alpha-1}$. 

\ek{Let} $\tilde{u}$ and $\tilde{u}_{h}^{n}$ \ek{respectively represent} the solutions of \eqref{eq:1} and \eqref{eq:8} with $\tilde{f}$.
If $\mu\ge \kappa_1(u)$, then $u=\tilde{u}$ holds true by uniqueness.
\ek{Consequently}, we can apply Theorem \ref{th:s5} to obtain
\begin{equation}
  \|\tilde{u}_{h}^{n}-u(t_{n})\|_{L^{\infty}(I)}\le C\left(\log\frac{1}{h}\right)^{\frac{1}{2}}(h^{2}+\tau),
\label{eq:a100}
\end{equation}
where~$C=C(T,\kappa_1(u),\gamma,N,\beta)$. 
At this juncture, we apply small $h$ and $\tau$ such that $C\left(\log\frac{1}{h}\right)^{\frac{1}{2}}(h^{2}+\tau)<1$, and set $\mu=\kappa_1(u)+1$. As $\|\tilde{u}_{h}^{n}\|_{L^{\infty}(I)}\le \kappa_1(u)+1=\mu$, we obtain $\tilde{u}_{h}^{n}=u_{h}^{n}$ by the uniqueness theorem. 
Therefore, \eqref{eq:a100} implies the desired estimate.  
\end{proof}


\section{Numerical examples}
\label{sec:5}

\tn{
This section presents some numerical examples to validate our theoretical results.
For this purpose, throughout this section, we set
\[  
f(s) = s|s|^{\alpha},~\alpha>0
\]
If this were the case, then the solution of (1) might blow up in the finite time.
Therefore, one must devote particular attention to setting of the time increment $\tau_{n}$.
Particularly, following Nakagawa \cite{nak76} (see also Chen \cite{che92} and Cho--Hamada--Okamoto \cite{cho07}), we
use the time-increment control
\begin{equation}
\label{eq:6.1a} 
\tau_{n}=\tau\cdot \min\left\{1,\  
\frac{1}{\|u_{h}^{n}\|_{2}^{\alpha}}\right\}
\qquad  \left(\|u_{h}^{n}\|_{2}^2=
\textstyle\sum\limits_{j=0}^{m-1}hx_{j+1}^{N-1}u_{h}^{n}(x_j)^{2}
\right),
\end{equation}
where $\tau=\lambda h^2$ and $\lambda=1/2$. 
 
}

\begin{figure}[htbp]
      \begin{minipage}{.49\textwidth}
        \begin{center}
           \includegraphics[width=.9\textwidth]{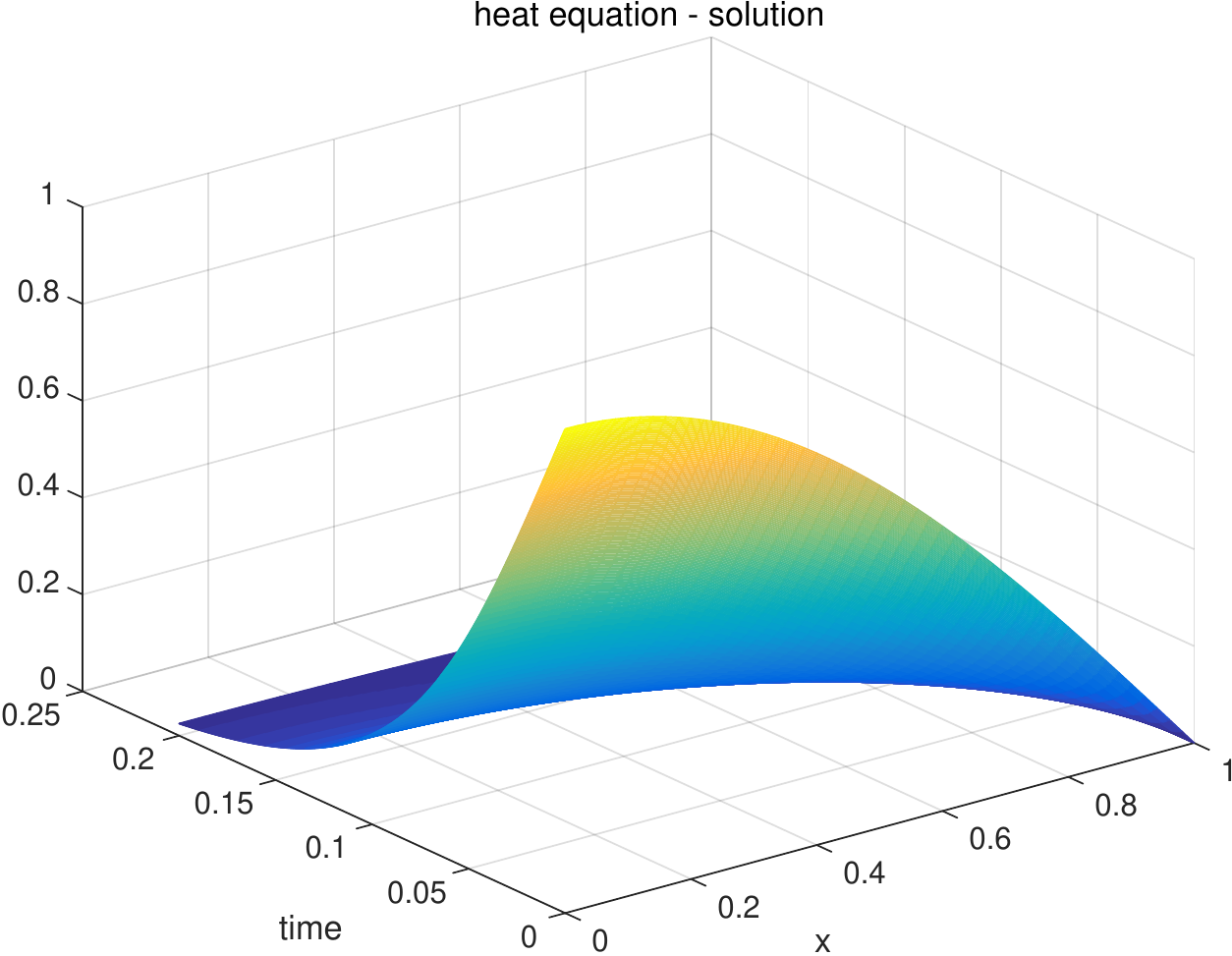} \\
         (a) (Sym)
        \end{center}
      \end{minipage}
      \begin{minipage}{.49\textwidth}
        \begin{center}
           \includegraphics[width=.9\textwidth]{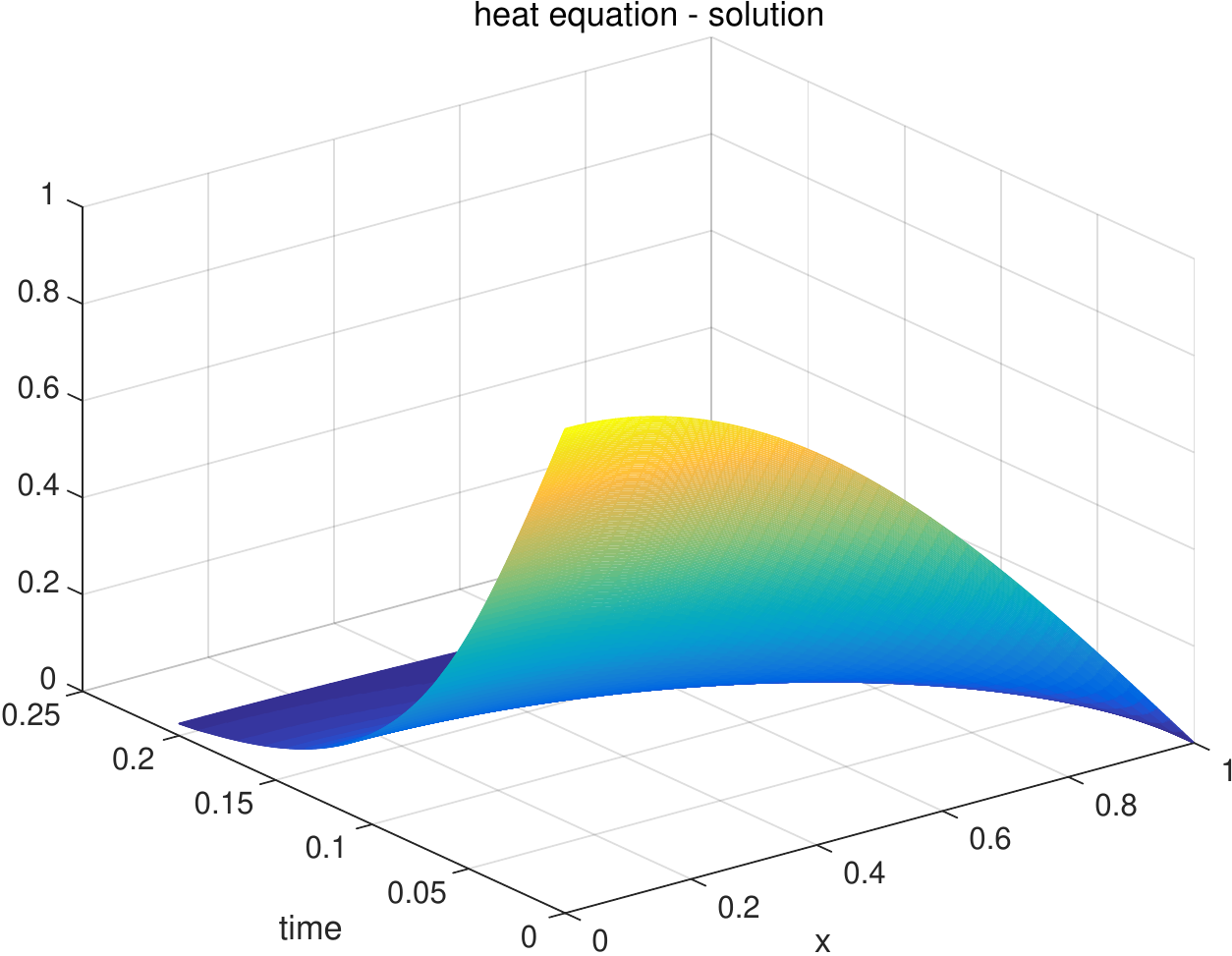}\\
          (b) (Non-Sym)
        \end{center}
      \end{minipage}
     \caption{$N=5$, $\alpha=\frac{4}{3}$ and $u(0,x)=\cos\frac{\pi}{2}x$.}
\label{fig:1}
\end{figure}

First, we compared the shapes of both solutions of (Sym) and (Non-Sym), as shown in 
Fig.~\ref{fig:1} for $N=5$, $\alpha=\frac{4}{3}$ and $u(0,x)=\cos\frac{\pi}{2}x$. We used the uniform space mesh $x_j=jh$ ($j=0,\ldots,m$) and $h=1/m$ with $m=50$.\\
We \ek{computed them} continuously until $t_{n}\tn{=} T=0.2$ or $\|u_{h}\|_{2}^{-1}<\epsilon=10^{-8}$, wherein
both solutions exist globally in time and \ek{approach} $0$ uniformly in $\overline{I}$ as $t\to\infty$. 
No \ek{marked} differences were observed in Figs.~\ref{fig:1}(a) and \ek{~\ref{fig:1}(b).} 
Subsequently, we took Fig.~\ref{fig:2} \tn{for} the case \ek{in which} the initial value was $u(0,x)=13\cos\frac{\pi}{2}x$\ek{. The} rest of the parameters are the same. At this point, the solutions of (Sym) and (Non-Sym) blew up after $x=0.06$ with the distinct observation that the solution of the former blew up earlier than that of the latter. Furthermore, the solution of (Non-Sym) had negative values \ek{whereas} that of (Sym) was always positive.

\begin{figure}[htbp]
      \begin{minipage}{.49\textwidth}
        \begin{center}
           \includegraphics[width=.9\textwidth]{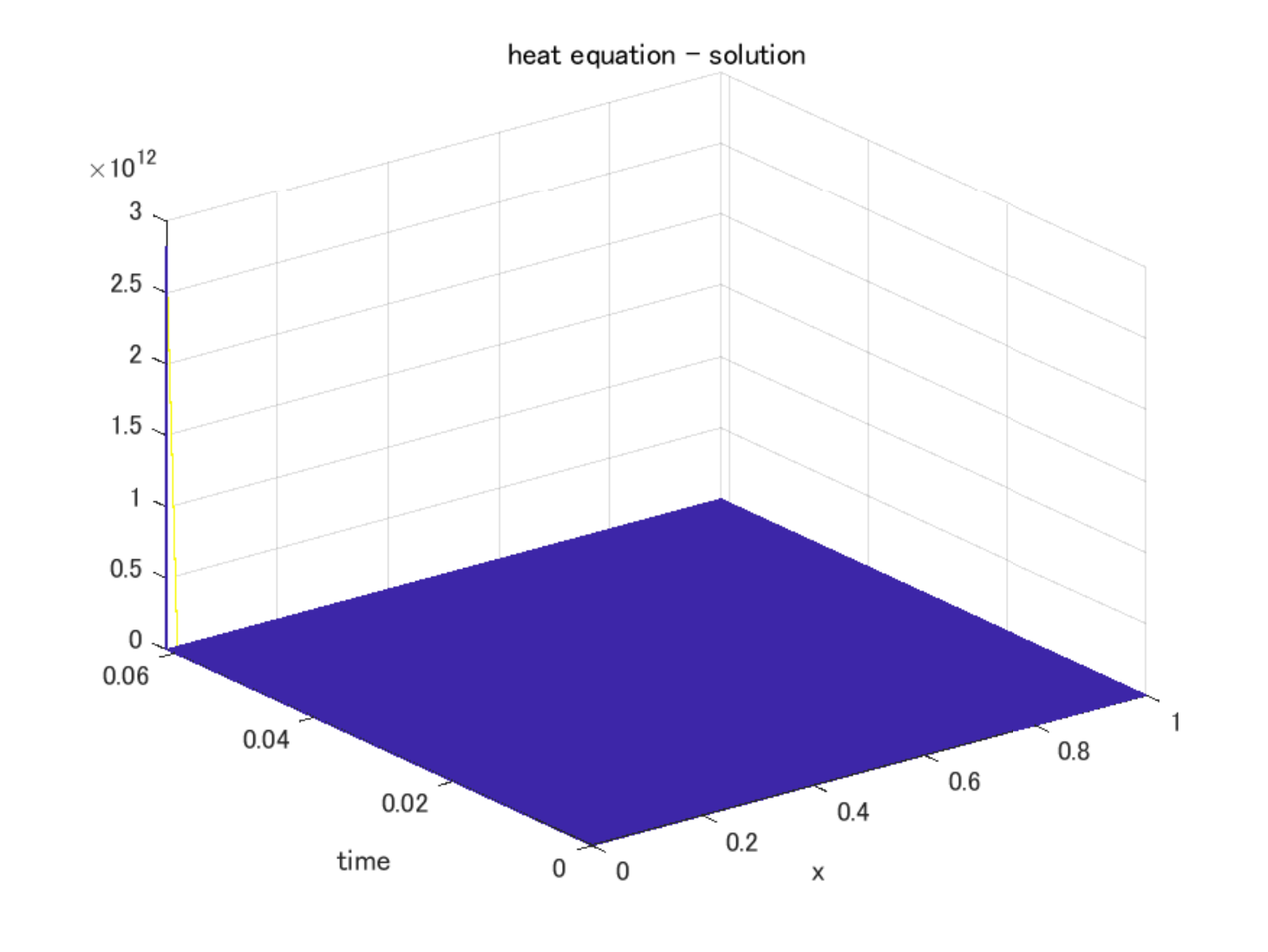} \\
         (a) (Sym)
        \end{center}
      \end{minipage}
      \begin{minipage}{.49\textwidth}
        \begin{center}
           \includegraphics[width=.9\textwidth]{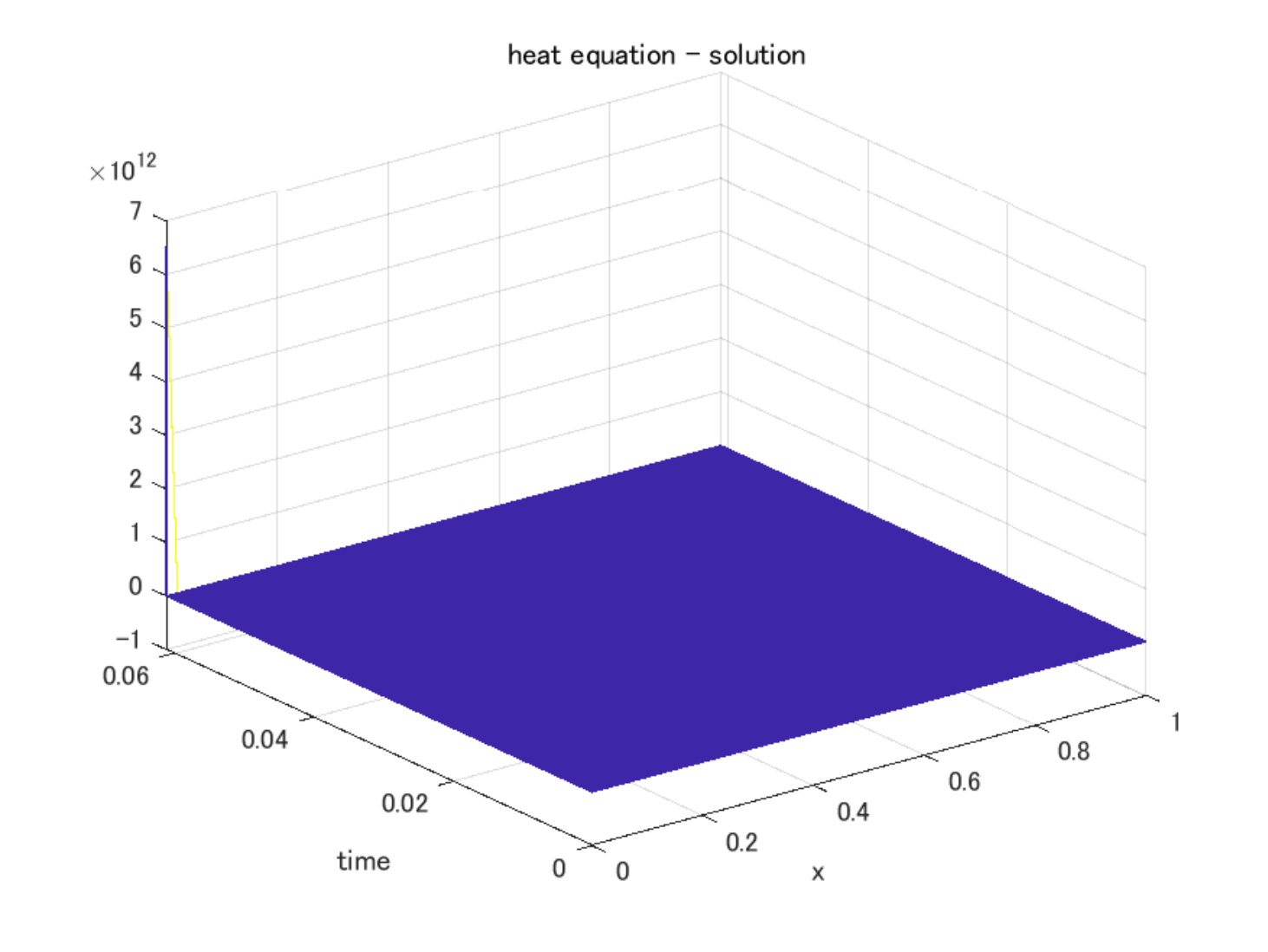}\\
         (b) (Non-Sym)
        \end{center}
      \end{minipage}
     \caption{$N=5$, $\alpha=\frac{4}{3}$ and $u(0,x)=13\cos\frac{\pi}{2}x$.}
  \label{fig:2}
\end{figure}

\ek{We} examined the error estimates of the solutions for the same uniform space mesh $x_j=jh$ ($j=0,\ldots,m$) and $h=1/m$\ek{. Also, we} regarded the numerical solution with $h'=1/480$ as the exact solution. 
The following quantities were compared: 
\begin{align*}
& \mbox{$L^{1}$err} && \|u_{h'}^{n}-u_{h}^{n}\|_{L^{1}(I)};\\
& \mbox{$L^{2}$err} && \left\|u_{h'}^{n}-u_{h}^{n}\right\|=\left\|x^{\frac{N-1}{2}}(u_{h'}^{n}-u_{h}^{n})\right\|_{L^{2}(I)};\\
& \mbox{$L^{\infty}$err} && \|u_{h'}^{n}-u_{h}^{n}\|_{L^{\infty}(I)}.
\end{align*}

Fig.~\ref{fig:3} \ek{presents} results for 
$N=3$, $\alpha=\frac{4}{3}$ and $u(0,x)=\cos\frac{\pi}{2}x$. 
We used the uniform time increment $\tau_n=\tau=\lambda h^2$ $(n=0,1,\ldots)$ with $\lambda=1/2$ and computed until $t\le T=0.005$. 
For (Sym), we observed the theoretical convergence rate $h^2+\tau$ in the $\|\cdot\|$ norm (see Theorem \ref{th:s3})\ek{,} whereas the rate in the $L^\infty$ norm \tn{deteriorated} slightly. For (Non-Sym), we observed \ek{second-order} convergence in the $L^\infty$ norm, which supports the results \ek{presented} in Theorem \ref{th:s5}.

\begin{figure}[htbp]
      \begin{minipage}{.49\textwidth}
        \begin{center}
           \includegraphics[width=.9\textwidth]{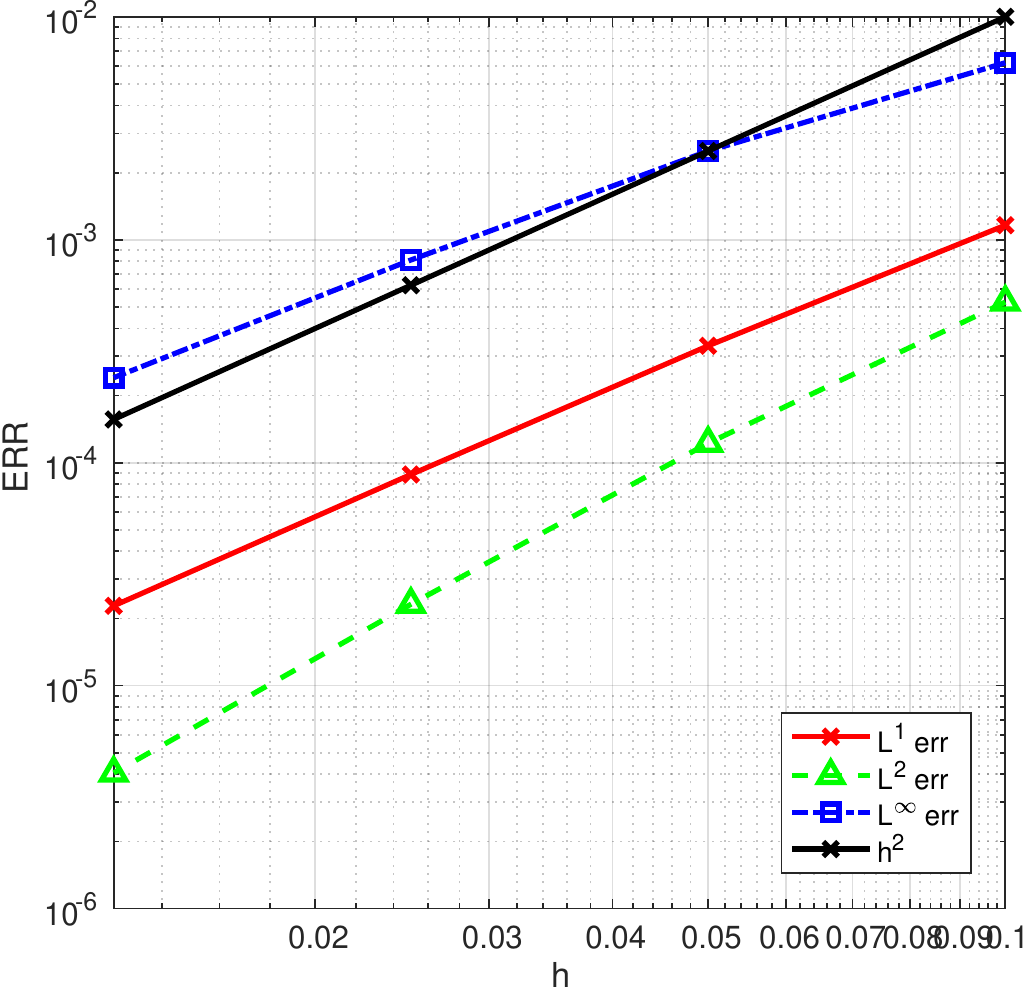} \\
         (a) (Sym)
        \end{center}
      \end{minipage}
      \begin{minipage}{.49\textwidth}
        \begin{center}
           \includegraphics[width=.9\textwidth]{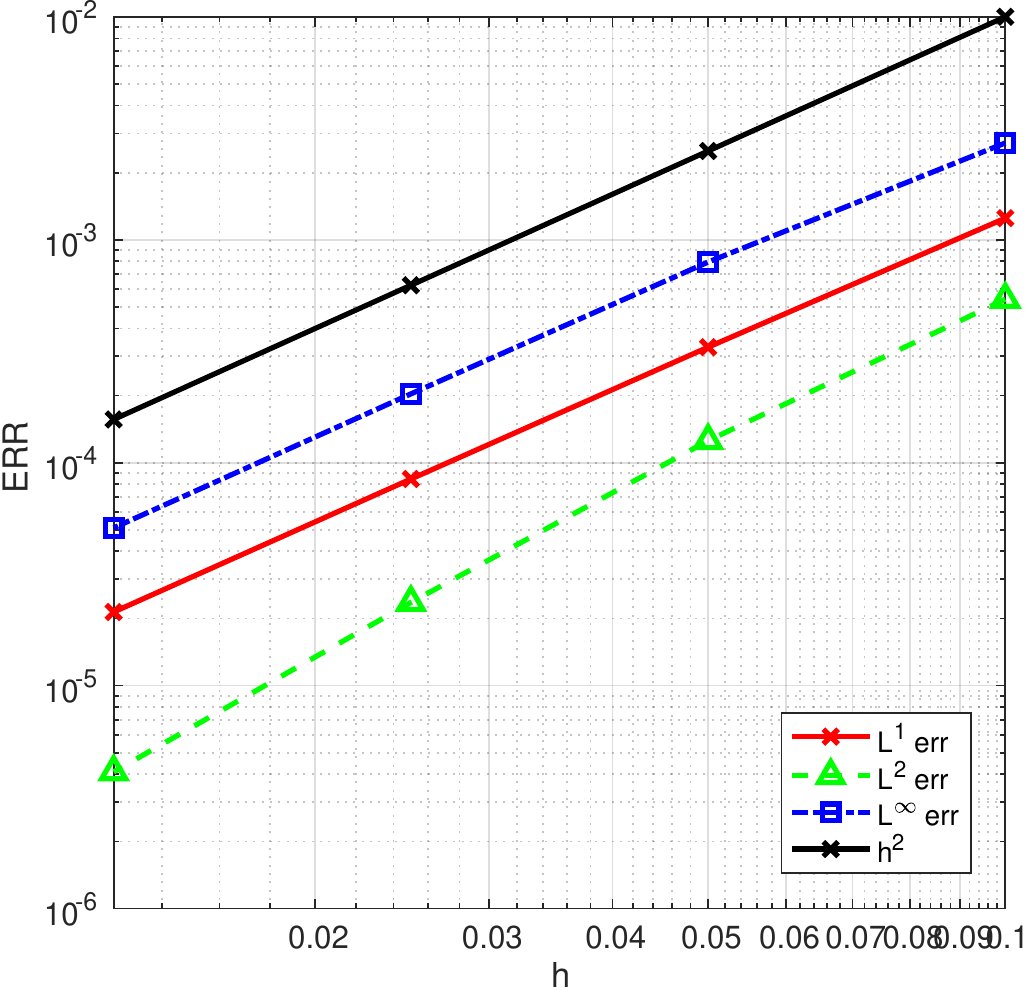}\\
          (b) (Non-Sym)
        \end{center}
      \end{minipage}
     \caption{Errors. $N=3$, $\alpha=\frac{4}{3}$ and $u(0,x)=\cos\frac{\pi}{2}x$.}
\label{fig:3}
\end{figure}

Moreover, we considered the case for $N=4$, which is not supported in Theorem \ref{th:s3} for (Sym)\ek{. Also, we} chose $\alpha=4$ and $u(0,x)=3\cos\frac{\pi}{2}x$ for this case. Fig.~\ref{fig:4}(d) displays the shape of the solution, which blew up at approximately $T=0.0035$. 
Furthermore, we computed errors until $T=0.0011, 0.0022$, and $0.0033$ using the uniform meshes $x_j$ and $\tau_n$ with $\lambda=0.11$. 
 From Fig.~\ref{fig:4}, we observed the second-order convergence in the $\|\cdot\|$ norm, suggesting the possibility of removing assumption \tn{$N\le 3$}.

 \begin{figure}[htbp]
      \begin{minipage}{0.49\textwidth}
        \begin{center}
           \includegraphics[width=.9\textwidth]{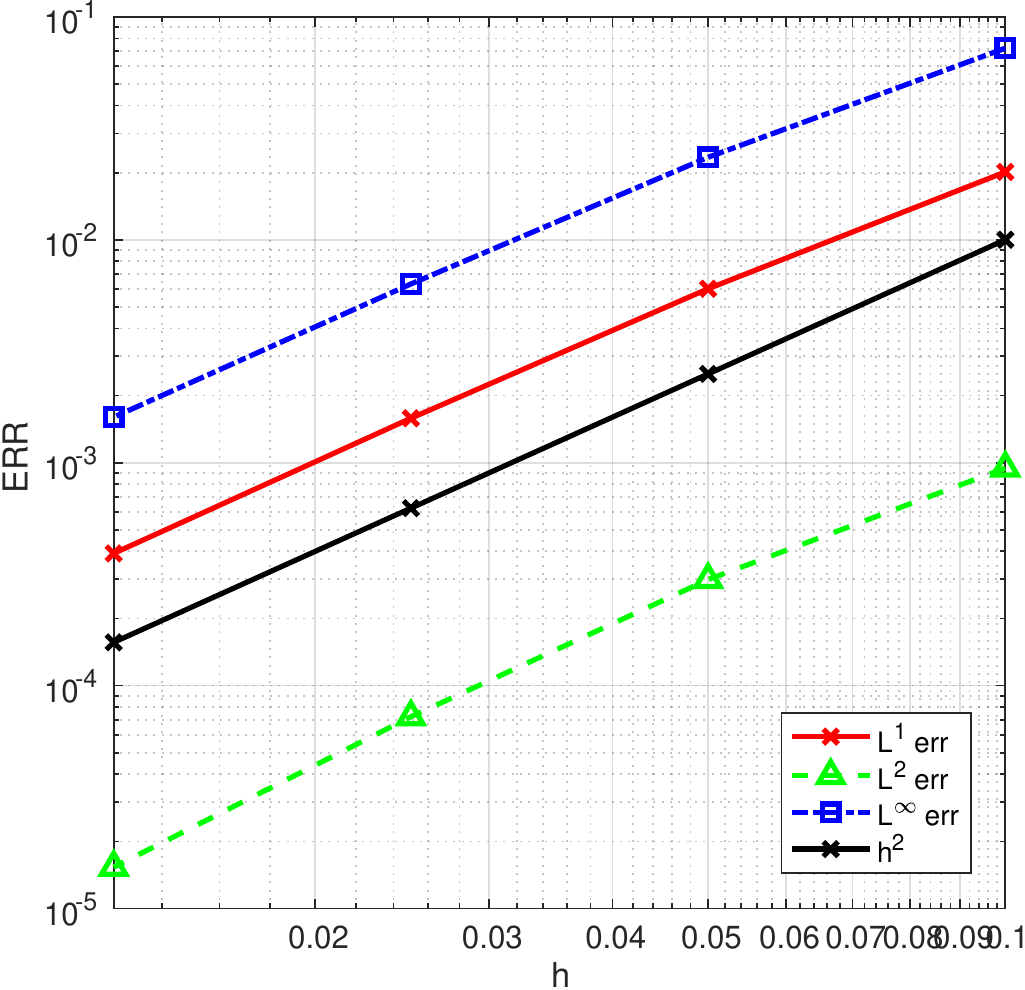} \\
(a) $T=0.0011$ 
        \end{center}
      \end{minipage}
      \begin{minipage}{0.49\textwidth}
        \begin{center}
           \includegraphics[width=.9\textwidth]{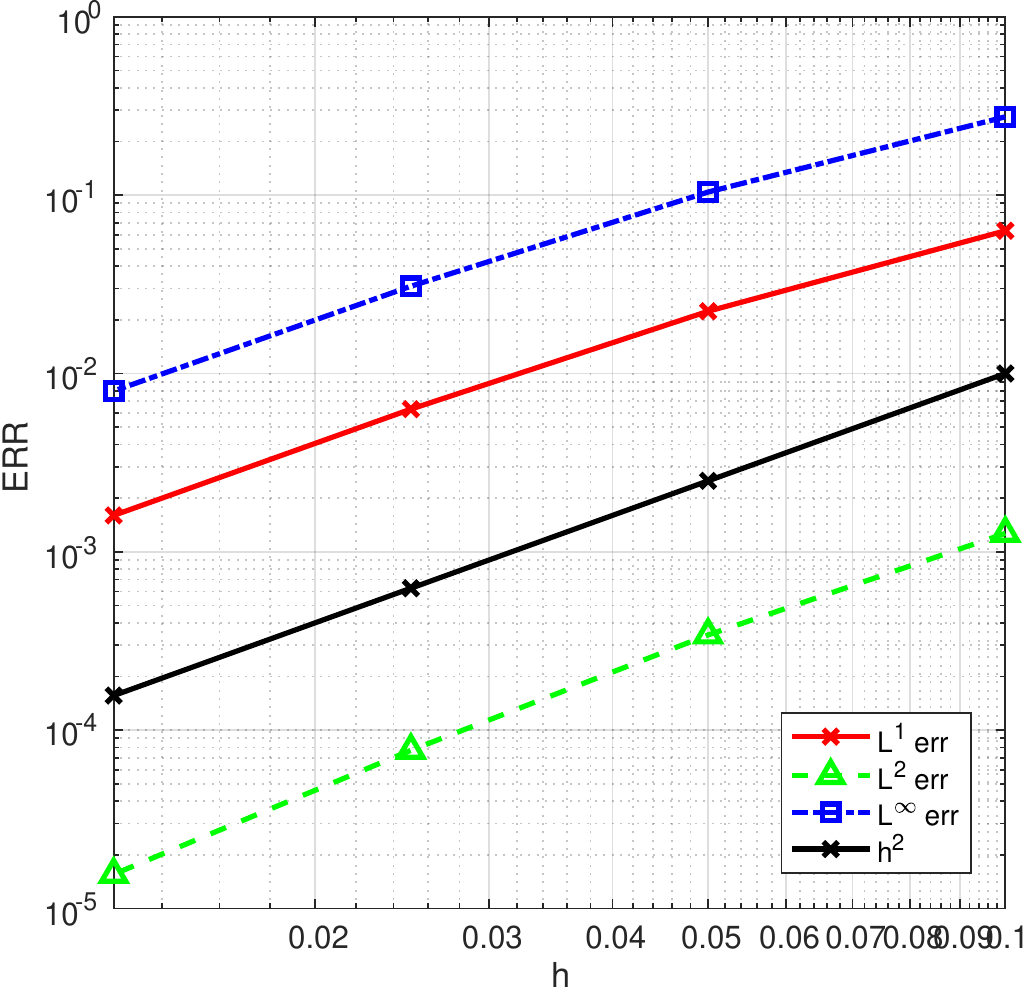}\\
          (b) $T=0.0022$
        \end{center}
      \end{minipage}
      \begin{minipage}{0.49\textwidth}
        \begin{center}
           \includegraphics[width=.9\textwidth]{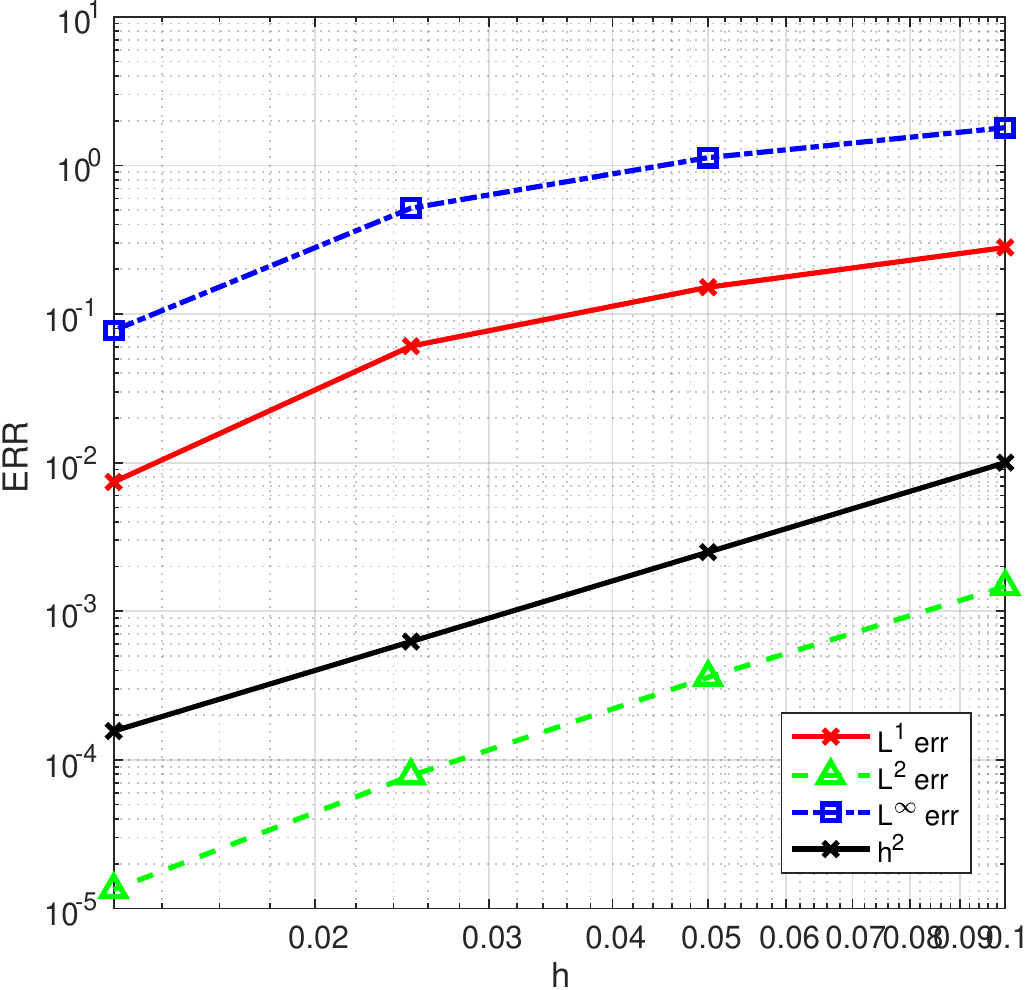}\\
(c) $T=0.0033$
        \end{center}
      \end{minipage}
       \begin{minipage}{0.49\textwidth}
        \begin{center}
           \includegraphics[width=.9\textwidth]{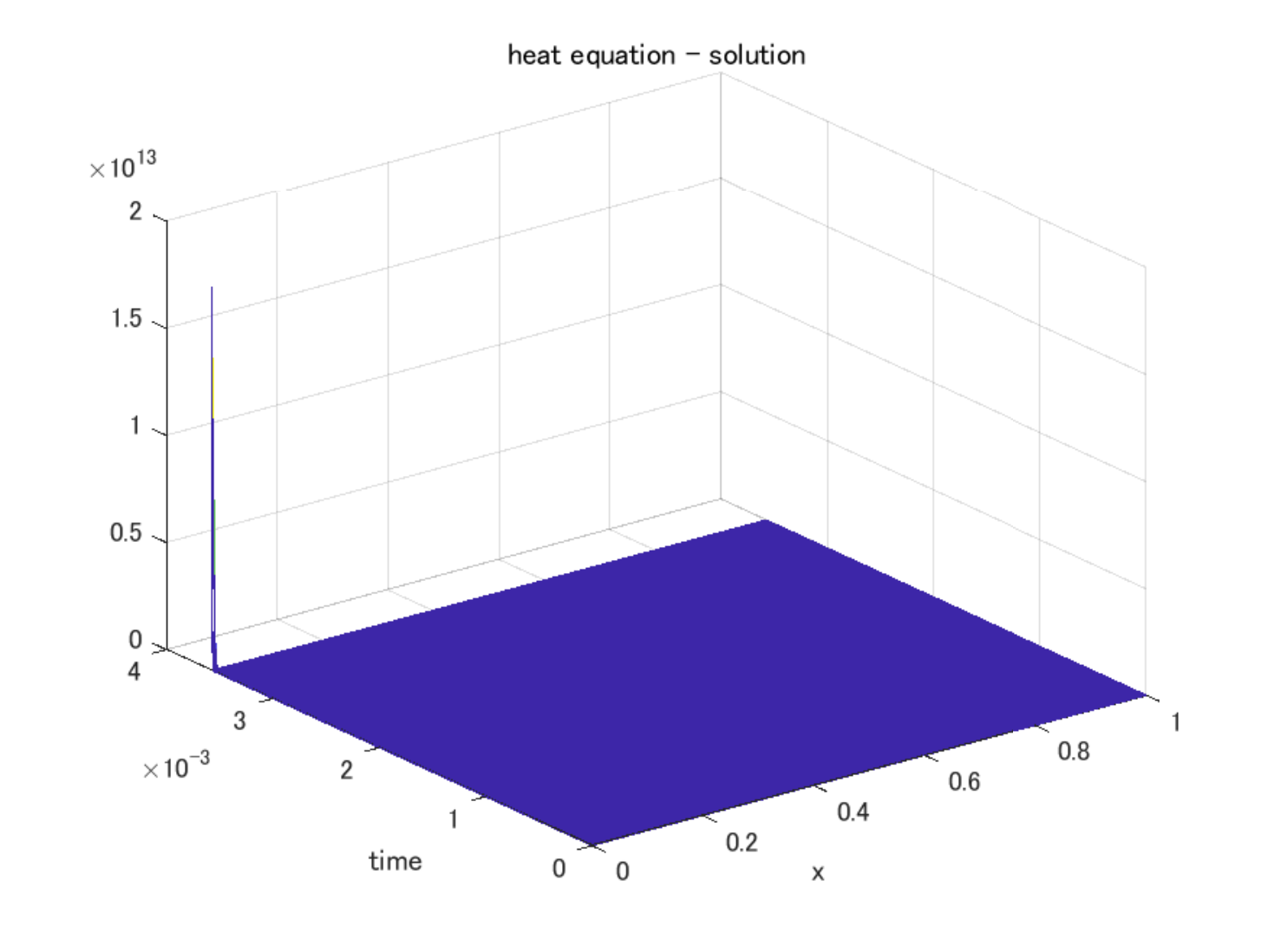}\\
      (d) solution shape
        \end{center}
      \end{minipage}
     \caption{Errors. $N=4$, $\alpha=4$ and $u(0,x)=3\cos\frac{\pi}{2}x$.}
\label{fig:4}
\end{figure}
 
Finally, we \tn{observed the non-increasing property of the energy functional. The energy functional associated with (1) is given as
\[
J(t)=\frac{1}{2}\|u_{x}\|^2-\frac{1}{\alpha+2}\int_I x^{N-1}|u|^{\alpha+2}~dx.
\]
We can use the standard method to prove that $J(t)$ is non-increasing in $t$.

This non-increasing property plays an important role in the blow-up analysis of the solution of (1)\ek{, as presented by} Nakagawa \cite{nak76}.
Therefore, it is of interest whether a discrete version of this non-increasing property holds true. 
Actually, introducing the discrete energy functional associated with (Sym) as 
\[
 J_{h}(n)=\frac{1}{2}\|(u^{n}_{h})_x\|^{2}-\frac{1}{\alpha+2}\int_Ix^{N-1}|u_{h}^{n}|^{\alpha+2}~dx,
\]
we prove the following. \ek{Appendix B presents the proof.}
\begin{prop}
\label{prop:5.5}
$J_{h}(n)$ is a non-increase sequence of $n$. 
\end{prop}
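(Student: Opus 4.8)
The plan is to reproduce, at the discrete level, the energy dissipation identity $J'(t)=-\|u_t\|^2$ that holds for the continuous problem \eqref{eq:1}. Concretely, I would test the scheme \textup{(Sym)} with $\chi=u_h^{n+1}-u_h^n\in S_h$, obtaining
\[
\frac{1}{\tau_n}\|u_h^{n+1}-u_h^n\|^2 + A(u_h^{n+1},\,u_h^{n+1}-u_h^n) = (f(u_h^n),\,u_h^{n+1}-u_h^n).
\]
The first key step is to rewrite the elliptic term using the polarization identity for the symmetric form $A$, namely
\[
A(u_h^{n+1},\,u_h^{n+1}-u_h^n)=\tfrac12\!\left[A(u_h^{n+1},u_h^{n+1})-A(u_h^n,u_h^n)\right]+\tfrac12 A(u_h^{n+1}-u_h^n,\,u_h^{n+1}-u_h^n),
\]
which, since $A(w,w)=\|w_x\|^2$, produces exactly the difference of the quadratic part of $J_h$ together with a nonnegative remainder.

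The main obstacle is to control the explicitly evaluated nonlinear term $(f(u_h^n),\,u_h^{n+1}-u_h^n)$, whose sign is not immediately clear. I would resolve this by introducing the potential $F(s)=|s|^{\alpha+2}/(\alpha+2)$, which satisfies $F'(s)=f(s)=s|s|^\alpha$ and is convex for $\alpha>0$. The tangent-line inequality for convex functions, $F(b)-F(a)\ge f(a)(b-a)$, then holds pointwise; applying it with $a=u_h^n(x)$ and $b=u_h^{n+1}(x)$ and integrating against the nonnegative weight $x^{N-1}$ gives
\[
(f(u_h^n),\,u_h^{n+1}-u_h^n)\le \frac{1}{\alpha+2}\int_I x^{N-1}\!\left[|u_h^{n+1}|^{\alpha+2}-|u_h^n|^{\alpha+2}\right]dx.
\]
This is precisely the point where the convexity of the potential compensates for the explicit, lagged treatment of $f$ in \textup{(Sym)}, so that no implicitness in the nonlinearity is required.

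Combining the two displays and rearranging, I expect to arrive at the quantitative dissipation estimate
\[
J_h(n+1)-J_h(n)\le -\frac{1}{\tau_n}\|u_h^{n+1}-u_h^n\|^2-\tfrac12\|(u_h^{n+1}-u_h^n)_x\|^2\le 0,
\]
from which the non-increasing property follows immediately. I anticipate no difficulty beyond the convexity argument above: the remaining manipulations are elementary, and, notably, neither a restriction on $\tau_n$ (as in \eqref{eq:tau1}) nor the condition $N\le 3$ is needed here, since both dissipative remainder terms are automatically of the correct sign.
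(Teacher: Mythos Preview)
Your proposal is correct and follows essentially the same route as the paper: testing \textup{(Sym)} with $u_h^{n+1}-u_h^n$ (the paper uses $\partial_{\tau_n}u_h^{n+1}$, a harmless rescaling), handling the elliptic term by the polarization identity (the paper's inequality \eqref{eq:5.5b}), and bounding the nonlinear term by the tangent-line/convexity inequality for $F(s)=|s|^{\alpha+2}/(\alpha+2)$ (which the paper derives via two applications of the mean value theorem in proving \eqref{eq:5.5c}). Your version is in fact slightly sharper, since you retain the extra dissipative term $\tfrac12\|(u_h^{n+1}-u_h^n)_x\|^2$ that the paper simply discards.
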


}

 Now let $N=3$, $\alpha=\frac{4}{3}$, and $u(0,x)=\cos\frac{\pi}{2}x,~13\cos\frac{\pi}{2}x$. We determined the time increment $\tau_{n}$ through \eqref{eq:6.1a} \ek{for} the uniform space mesh $x_j=jh$ with $h=1/m$ and \tn{$m=50$}. 
Fig.~\ref{fig:6} \ek{presents} the results, which support that of Proposition \ref{prop:5.5}. 

\begin{figure}[htbp]
      \begin{minipage}{0.49\textwidth}
        \begin{center}
           \includegraphics[width=0.9\textwidth]{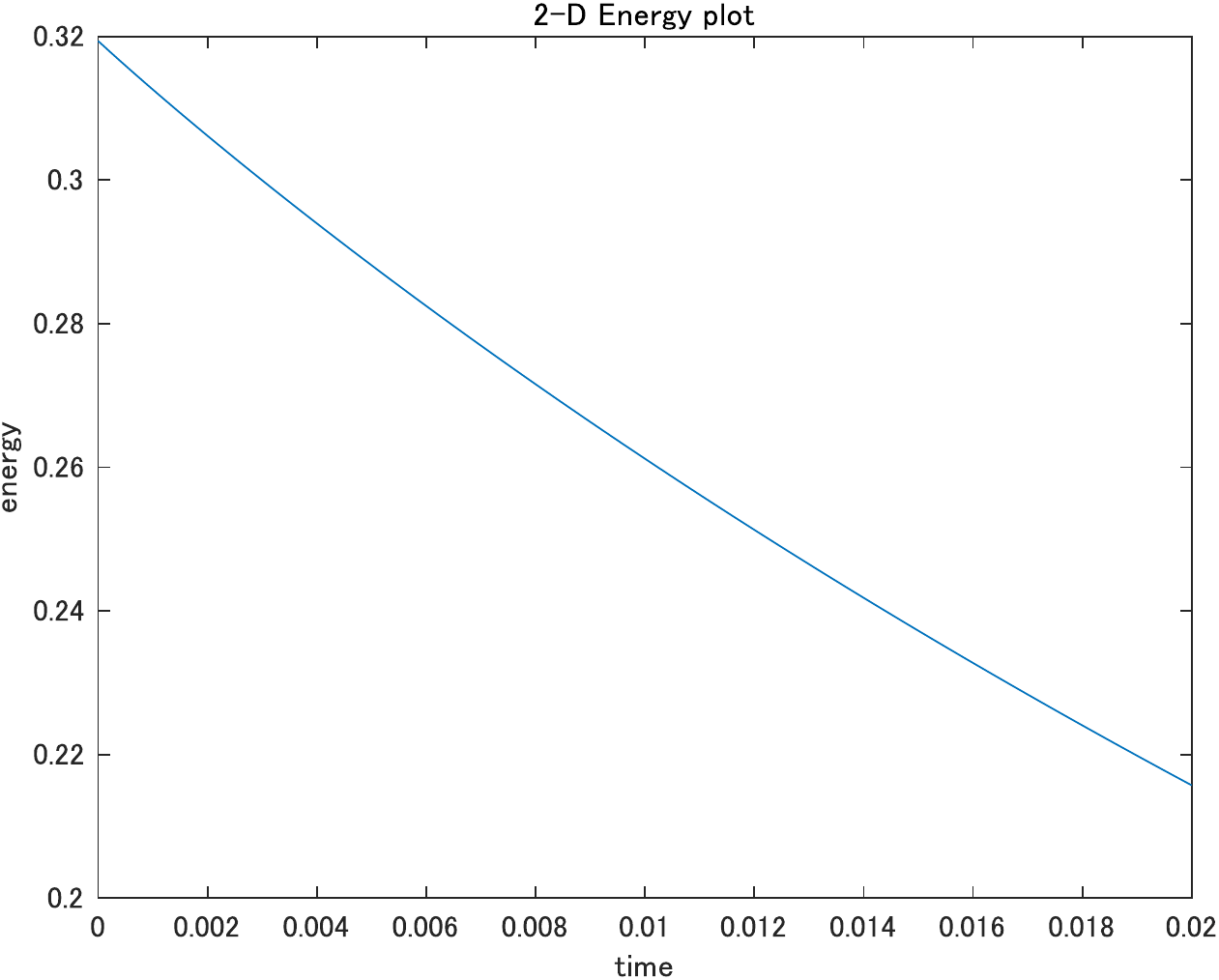} \\
         (Sym) \& $u(0,x)=\cos\frac{\pi}{2}x$
        \end{center}
      \end{minipage}
      \begin{minipage}{0.49\textwidth}
        \begin{center}
           \includegraphics[width=0.9\textwidth]{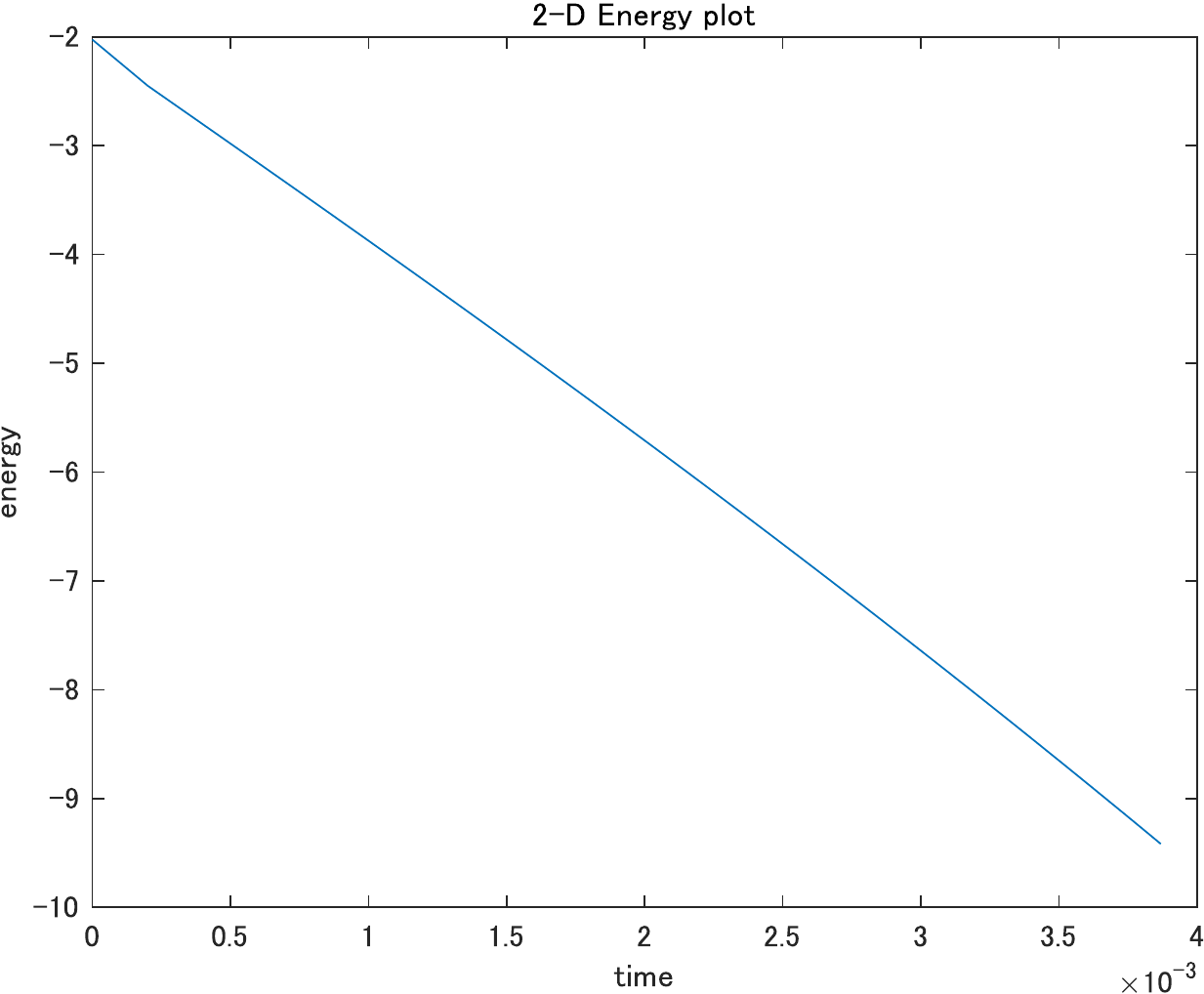}\\
         (Sym) \& $u(0,x)=13\cos\frac{\pi}{2}x$
        \end{center}
      \end{minipage}
     \caption{Energy functional.}
\label{fig:6}
\end{figure}

\appendix
\section{Proofs of \eqref{eq:a11} and \eqref{eq:a12}} 
\label{sec:a1}

Proofs of \eqref{eq:a11} and \eqref{eq:a12} are stated in this appendix \ek{using the same notation as that} used in Section \ref{sec:4}.

\begin{proof}[Proof of \eqref{eq:a11}]
\ek{By application of} \eqref{eq:s5.2}, \eqref{eq:tj3a}, and \eqref{eq:tj3b}, we derived the expression 
\begin{multline*}
\frac{1}{\tau_n}\left(\vnorm{\theta^{n+1}}^2-\vnorm{\theta^{n+1}}\cdot 
\vnorm{\theta^{n}}\right)
\le 
M(\vnorm{\theta^n}+Ch^2\|u_{xx}\|_{L^\infty(Q_T)})\cdot \vnorm{\theta^{n+1}}\\
+M\tau_n \|u_t\|_{L^\infty(Q_T)}\cdot\vnorm{\theta^{n+1}}
+\tau_{n} \|u_{tt}\|_{L^{\infty}(Q_T)}\cdot\vnorm{\theta^{n+1}}\\
+Ch^2\|u_{xxt}\|_{L^\infty(Q_T)}\cdot\vnorm{\theta^{n+1}}.
\end{multline*}
Consequently, we have
\[
\vnorm{\theta^{n+1}}
\le 
(1+\tau_n M) \vnorm{\theta^n}+C\tau_n (h^2+\tau_n).
\]
Therefore, \ek{similarly to} the derivation of \eqref{eq:th1.14}, we obtain from \eqref{eq:iv3} the expression \ek{of}
\[
 \vnorm{\theta^n}\le C\left(h^2+\tau\right)
\]
to complete the proof. 
\end{proof}

\begin{proof}[Proof of \eqref{eq:a12}]
First, we prove the case \ek{of} $n=0$. 
Substituting \eqref{eq:s5.1} for $n=0$ and $\chi=\theta^{1}$, we obtain 

\begin{multline*}
\dual{\frac{\theta^{1}-\theta^0}{\tau_{0}},\theta^{1}}+B(\theta^{1},\theta^{1})
\le
\dual{f(u_{h}^{0})-f(u^0),\theta^1} \\
 -\dual{f(u(t_{1}))-f(u^0),\theta^1}
-\dual{\partial_{\tau_0} u(t_{1})-u_{t}(t_{1}),\theta^1}
-\dual{\frac{\rho^{1}-\rho^0}{\tau_{0}},\theta^{1}}.
\end{multline*}
\ek{Because} $\theta^0=0$, we apply \eqref{eq:tj3b} \ek{to} get
\begin{align*}
\frac{1}{\tau_0}\vnorm{\theta^{1}}^2
& \le
M \vnorm{\rho^0}\cdot \vnorm{\theta^1}
+M\tau_0\|u_t\|_{L^\infty(Q_T)}\vnorm{\theta^1}\\
&{ }\quad +\tau_0\|u_{tt}\|_{L^\infty(Q_T)}\vnorm{\theta^1}
+\vnorm{\partial_{\tau_0}\rho^{1}}\cdot \vnorm{\theta^1}\\
&\le  C(\tau_0+h^2)\vnorm{\theta^1}.
\end{align*}
Repeatedly using $\theta^0=0$, we obtain 
\begin{equation}
 \vnorm{\partial_{\tau_0}\theta^1}\le C(\tau_0+h^2).
\label{eq:a12.00}
\end{equation}

\ek{Next} we assume $n\ge 0$ and $t_{n+2}\le T$.  
\ek{Consequently}, from \eqref{eq:s5.1}, we derive 
\begin{align}
& \dual{\partial_{\tau_{n+1}}\theta^{n+2}-\partial_{\tau_n}\theta^{n+1},\chi} 
+B(\theta^{n+2}-\theta^{n+1},\chi) \nonumber \\
&\mbox{ }\quad
=\langle \underbrace{f(u_{h}^{n+1})-f(u(t_{n+1}))-f(u_{h}^{n})+f(u(t_{n}))}_{=J_1},\chi\rangle \nonumber \\
&\mbox{ }\qquad -\langle \underbrace{f(u(t_{n+2}))-f(u(t_{n+1}))-f(u(t_{n+1}))+f(u(t_{n}))}_{=J_2},\chi \rangle  \nonumber \\
&\mbox{ }\qquad -\langle \underbrace{\partial_{\tau_{n+1}}u(t_{n+2})-u_{t}(t_{n+2}) -\partial_{\tau_{n}}u(t_{n+1})
 + u_{t}(t_{n+1})}_{=J_3} ,\chi\rangle  \nonumber \\
&\mbox{ }\qquad -\langle \underbrace{\partial_{\tau_{n+1}}\rho^{n+2}-\partial_{\tau_{n}}\rho^{n+1}}_{=J_4},\chi\rangle
\label{eq:a12.1}
\end{align}
for any $\chi\in S_h$. 
Substituting this \ek{expression} for $\chi=\partial_{\tau_{n+1}}\theta^{n+2}$, we \ek{obtain} 
\begin{equation*}
  \vnorm{\partial_{\tau_{n+1}}\theta^{n+2}}^2-
\vnorm{\partial_{\tau_n}\theta^{n+1}}\cdot \vnorm{\partial_{\tau_{n+1}}\theta^{n+2}} 
\le \\
\vnorm{\partial_{\tau_{n+1}}\theta^{n+2}}\sum_{j=1}^4\vnorm{J_j}.
\end{equation*}
\ek{Here}, we \ek{accept} the following estimates: 
\begin{subequations}
\begin{align}
\vnorm{J_1} &\le C\tau_n(1+\tau_n) \vnorm{\partial_{\tau_n}\theta^{n+1}} +C\tau_n(h^2+\tau_n+\tau_nh^2), \label{eq:a12.J1}\\
\vnorm{J_2},\vnorm{J_3} &\le C\tau_{n+1}(\tau_{n+1}+\tau_{n})+
C|\tau_{n+1}-\tau_{n}|, \label{eq:a12.J2}\\
\vnorm{J_4} &\le C (\tau_{n+1}+\tau_n)h^2. \label{eq:a12.J4}
\end{align} 
\end{subequations}
In view of the quasi-uniformity of time partition \eqref{eq:qt}, we have
\[
 \tau_{n+1}=\tau_n\frac{\tau_{n+1}}{\tau_n}\le \gamma \tau_n.
\] 
Summing up, we deduce
\begin{equation}
b_{n+1}-b_n\le C \tau_n b_n +C\tau_n\left(h^2+\tau+\frac{\delta}{\tau_{\min}}\right) ,
\label{eq:a12.3}
\end{equation}
where $b_n=\vnorm{\partial_{\tau_n}\theta^{n+1}}$. Therefore,  
\[
 b_{n}\le e^{CT}b_0+C(e^{CT}-1)\left(h^2+\tau+\frac{\delta}{\tau_{\min}}\right),
\]
which, together with \eqref{eq:a12.00}, implies the desired inequality \eqref{eq:a12}. 

\medskip

We now prove \eqref{eq:a12.J1}--\eqref{eq:a12.J4}. 

\smallskip

\noindent \emph{Estimation for $J_1$. } We apply Taylor's theorem to obtain 
\begin{align*}
J_1 
&=f'(s_{1})(u_{h}^{n+1}-u_{h}^{n})-f'(s_{2})(u(t_{n+1})-u(t_{n}))\\
&= f'(s_{1})[(\theta^{n+1}+\rho^{n+1})-(\theta^{n}+\rho^{n})]+\frac{f'(s_{1})-f'(s_{2})}{s_{1}-s_{2}}(s_{1}-s_{2})(u(t_{n+1})-u(t_{n})),
\end{align*}
where 
$s_{1}=u_{h}^{n+1}-\mu_{1}(u_{h}^{n+1}-u_{h}^{n})$ and $s_{2}=u(t_{n+1})-\mu_{2}[u(t_{n+1})-u(t_{n})]$ for some $\mu_{1},\mu_2\in [0,1]$. 
In view of \eqref{eq:tj3a}, \eqref{eq:tj3b}, and \eqref{eq:a11}, we find the \ek{following} estimates

\begin{align*}
\vnorm{J_1}
&\le \tau_nM\vnorm{\partial_{\tau_n}\theta^{n+1}} +\tau_nM\vnorm{\partial_{\tau_n}\rho^{n+1}}+  \wnorm{\frac{f'(s_{1})-f'(s_{2})}{s_{1}-s_{2}}(s_{1}-s_{2})} \cdot \tau_n\|u_t\|_{L^\infty(Q_T)},\\
&\le \tau_nM\vnorm{\partial_{\tau_n}\theta^{n+1}} +C\tau_n Mh^2\|u_{txx}\|_{L^\infty(Q_T)}+ \wnorm{\frac{f'(s_{1})-f'(s_{2})}{s_{1}-s_{2}}(s_{1}-s_{2})} \cdot \tau_n\|u_t\|_{L^\infty(Q_T)},
\end{align*}
and
\begin{align*}
&\wnorm{\displaystyle{\frac{f'(s_{1})-f'(s_{2})}{s_{1}-s_{2}}(s_{1}-s_{2})}} &\\
 \le& M_{2}\vnorm{\theta^{n+1}+\rho^{n+1}-\mu_{1}(\theta^{n+1}+\rho^{n+1}-\theta^{n}-\rho^{n})+(\mu_{2}-\mu_{1})(u(t_{n+1})-u(t_{n}))}&\\
 \le& M_{2}\{\vnorm{\theta^{n+1}}+\vnorm{\rho^{n+1}}+ \tau_n\vnorm{\partial_{\tau_n}\theta^{n+1}} +\tau_n\vnorm{\partial_{\tau_n}\rho^{n+1}}+\tau_n\|u_t\|_{L^\infty(Q_T)}\}&\\
 \le& M_{2}\{C(h^2+\tau)+Ch^2\|u_{xx}\|_{L^\infty(Q_T)}+ \tau_n\vnorm{\partial_{\tau_n}\theta^{n+1}} +C\tau_n h^2\|u_{txx}\|_{L^\infty(Q_T)}+\tau_n\|u_t\|_{L^\infty(Q_T)}\}.
\end{align*}

\smallskip

\noindent \emph{Estimation for $J_2$. } We begin with
\begin{align*}
J_2
&=f'(s_{3})(u(t_{n+2})-u(t_{n+1}))-f'(s_{4})(u(t_{n+1})-u(t_{n}))\\
&=\frac{f'(s_{3})-f'(s_{4})}{s_{3}-s_{4}}(s_{3}-s_{4})\tau_{n+1}u_{t}(\eta_{1})+f'(s_{4})(\tau_{n+1}u_{t}(\eta_{1})-\tau_{n}u_{t}(\eta_{2}))\\
&=\frac{f'(s_{3})-f'(s_{4})}{s_{3}-s_{4}}(s_{3}-s_{4})\tau_{n+1}u_{t}(\eta_{1}) \\
&{ }\quad +f'(s_{4})\tau_{n+1}(u_{t}(\eta_{1})-u_{t}(\eta_{2}))
+f'(s_{4})(\tau_{n+1}-\tau_n)u_{t}(\eta_{2}),
\end{align*}
where $s_{3}=u(t_{n+1})+\mu_{3}(u(t_{n+2})-u(t_{n+1}))$ and 
$s_{4}=u(t_{n+1})+\mu_{4}(u(t_{n})-u(t_{n+1}))$ for some $\mu_3,\mu_4\in [0,1]$, $\eta_1\in [t_{n+1},t_{n+2}]$, and $\eta_2\in [t_{n},t_{n+1}]$. 
Next, we obtain the \ek{following} estimate\ek{:}

\begin{align*}
\vnorm{J_2}
&\le \tau_{n+1} \wnorm{\frac{f'(s_{3})-f'(s_{4})}{s_{3}-s_{4}}(s_{3}-s_{4})}\cdot \|u_{t}\|_{L^\infty(Q_T)} \\
&{ }\quad +M_1\tau_{n+1}(\tau_{n+1}+\tau_{n})\|u_{tt}\|_{L^\infty(Q_T)}
+M_1|\tau_{n+1}-\tau_n|\cdot \|u_t\|_{L^\infty(Q_T)};
\end{align*}
\begin{align*}
\wnorm{\frac{f'(s_{3})-f'(s_{4})}{s_{3}-s_{4}}(s_{3}-s_{4})}&\le CM_{2}(\tau_{n+1}+\tau_n)\|u_t\|_{L^\infty(Q_T)}.
\end{align*}

\smallskip

\noindent \emph{Estimation for $J_3$. } 
We express $J_3$ as 
\begin{align*}
J_3
&= \frac{\tau_{n+1}u_t(t_{n+2})-\frac{1}{2}\tau_{n+1}^2u_{tt}(s_5)}{\tau_{n+1}}
-u_{t}(t_{n+2}) \\
&{ }\qquad -\left(\frac{\tau_{n}u_t(t_{n+1})-\frac{1}{2}\tau_{n}^2u_{tt}(s_6)}{\tau_{n}}
-u_{t}(t_{n+1})\right)\\
&= -\frac{1}{2}\tau_{n+1}u_{tt}(s_5) +\frac{1}{2}\tau_{n}u_{tt}(s_6)\\
&= \frac{1}{2}\tau_{n+1}(u_{tt}(s_6)-u_{tt}(s_5))- \frac{1}{2}(\tau_{n+1}-\tau_{n})u_{tt}(s_6)\\
&=\frac12 \tau_{n+1}u_{ttt}(s_{7})(s_{5}-s_{6})-\frac12 (\tau_{n+1}-\tau_{n})u_{tt}(s_{6})
\end{align*}
for some 
$s_5\in [t_{n+1},t_{n+2}]$, 
$s_6\in [t_{n},t_{n+1}]$ and $s_7\in [s_6,s_5]\subset [t_{n},t_{n+2}]$. Therefore, 
\[
\vnorm{J_3}\le \frac{1}{2}\tau_{n+1}(\tau_{n+1}+\tau_{n})\|u_{ttt}\|_{L^\infty(Q_T)}+
\frac{1}{2}|\tau_{n+1}-\tau_{n}|\cdot   \|u_{tt}\|_{L^\infty(Q_T)}.
\]

\smallskip

\noindent \emph{Estimation for $J_4$. } For some 
$s_8\in [t_{n+1},t_{n+2}]$, 
$s_9\in [t_{n},t_{n+1}]$\ek{,} and  
$s_{10}\in [s_9,s_8]$, we \ek{obtain} the expression 
\[
\frac{\rho^{n+2}-\rho^{n+1}}{\tau_{n+1}}-\frac{\rho^{n+1}-\rho^{n}}{\tau_{n}}
=\rho_{t}(s_{8})-\rho_{t}(s_{9})
=(s_8-s_9)\rho_{tt}(s_{10})
\]
Therefore, using \eqref{eq:tj3}, 
\[
 \vnorm{J_4}\le C (\tau_{n+1}+\tau_n)h^2\|u_{ttxx}\|_{L^\infty(Q_T)}.
\]

\end{proof}                                             

\section{Proof of Proposition 5.1}
\tn{
\begin{proof}
Substituting $\chi=\partial_{\tau_n}u_{h}^{n+1}$ for \eqref{eq:3}, we have 
\begin{equation*}
\|\partial_{\tau_n}u_{h}^{n+1}\|_h^{2}=-\left((u^{n+1}_{h})_x,\frac{(u^{n+1}_{h})_x-(u^{n}_{h})_x}{\tau_{n}}\right)+\left(u_{h}^{n}|u_{h}^{n}|^{\alpha},\frac{u_{h}^{n+1}-u_{h}^{n}}{\tau_{n}}\right).
\end{equation*}
Therefore, for the conditions 
\begin{subequations}
\begin{align}
&\left((u^{n+1}_{h})_x,\frac{(u^{n+1}_{h})_x-(u^{n}_{h})_x}{\tau_{n}}\right)\ge\frac{1}{2}\left((u^{n}_{h})_x+(u^{n+1}_{h})_x,\frac{(u^{n+1}_{h})_x-(u^{n}_{h})_x}{\tau_{n}}\right),\label{eq:5.5b} \\
&\left(u_{h}^{n}|u_{h}^{n}|^{\alpha},\frac{u_{h}^{n+1}-u_{h}^{n}}{\tau_{n}}\right)\le
\frac{1}{\tau_{n}(\alpha+2)}\left[\int_Ix^{N-1} (|u_{h}^{n+1}|^{\alpha+2}-|u_{h}^{n}|^{\alpha+2})~dx\right] \label{eq:5.5c},
\end{align}
\end{subequations}
we obtain 
\begin{equation}
 \left\|\partial_{\tau_n}u_{h}^{n+1}\right\|_h^{2}
\le
-\frac{1}{\tau_{n}}(J_{h}(n+1)-J_{h}(n)),
\label{eq:5.5e} 
\end{equation}
which implies that $J_{h}(n+1)\le J_{h}(n)$. 

We can validate \eqref{eq:5.5b} and \eqref{eq:5.5c}\ek{. Also,} \eqref{eq:5.5b} is derived readily. To prove \eqref{eq:5.5c}, we set $g(s)=\frac{1}{\alpha+2}|s|^{\alpha+2}$, and apply the mean value theorem to deduce 
\[
 g(u_h^{n+1})- g(u_h^{n})=w|w|^{\alpha}(u_h^{n+1}-u_h^{n}),
\]
where $w=w(x)=u_{h}^{n}+\sigma(u_{h}^{n+1}-u_{h}^{n})$ and $\sigma=\sigma(x)\in (0,1)$. Consequently, 
\begin{multline*}
J\equiv \frac{1}{\tau_{n}(\alpha+2)}\left[\int_Ix^{N-1} (|u_{h}^{n+1}|^{\alpha+2}-|u_{h}^{n}|^{\alpha+2})~dx\right] 
- \int_Ix^{N-1}u_{h}^{n}|u_{h}^{n}|^{\alpha}\frac{u_{h}^{n+1}-u_{h}^{n}}{\tau_{n}}~dx\\
= \frac{1}{\tau_n}\int_Ix^{N-1}\left[w|w|^\alpha-u_{h}^{n}|u_{h}^{n}|^{\alpha}\right](u_h^{n+1}-u_h^{n})~dx. 
\end{multline*}
\ek{Then we} repeat the mean value theorem to resolve 
\[
 w|w|^\alpha-u_{h}^{n}|u_{h}^{n}|^{\alpha}=(\alpha+1)|\tilde{w}|^\alpha(w-u^n_h)=(\alpha+1)|\tilde{w}|^\alpha\tilde{\sigma}(u^{n+1}_h-u_h^n),
\]
where $\tilde{w}=u^n_h+\tilde{\sigma}(w-u_h^n)$ and $\tilde{\sigma}=\tilde{\sigma}(x)\in (0,1)$. Therefore, 
\[
 J=\frac{1}{\tau_n}\int_I
x^{N-1}(\alpha+1)|\tilde{w}|^\alpha\tilde{\sigma}
(u_h^{n+1}-u_h^{n})^2~dx\ge 0,
\]
which gives \eqref{eq:5.5c}. 
\end{proof}
}

\paragraph{Acknowledgments.}
This work was supported by JST CREST Grant \ek{No.} JPMJCR15D1, Japan, and JSPS KAKENHI Grant \ek{No.} 15H03635, Japan. In addition, the first author was supported by the Program for Leading Graduate Schools, MEXT, Japan.


\begin{thebibliography}{1}

\bibitem{akr03}
Akrivis,~G.D., Dougalis,~V.A., Karakashian,~O.A., McKinney,~W.R.:
\newblock Numerical approximation of blow-up of radially symmetric solutions of 
the nonlinear Schr{\"{o}}dinger equation.
\newblock SIAMJ. Sci. Comput. 25, 186--212 (2003).

\bibitem{che86}
Chen, Y.G.:
\newblock Asymptotic behaviours of blowing-up solutions for finite difference analogue of {$u_t=u_{xx}+u^{1+\alpha}$}.
\newblock J. Fac. Sci. Univ. Tokyo Sect. IA Math 33, 541--574 (1986).

\bibitem{che92}
Chen, Y.G.:
\newblock Blow-up solutions to a finite difference analogue of {$u_t=\Delta
  u+u^{1+\alpha}$} in {$N$}-dimensional balls.
\newblock Hokkaido Math. J. 21(3), 447--474, (1992).

\bibitem{cho10}
Cho, C.H.:
\newblock A finite difference scheme for blow-up solutions of nonlinear wave equations.
\newblock Numer. Math. Theory Methods Appl. 3, 475--498 (2010).

\bibitem{cho07}
Cho, C.H.~, Hamada, S., Okamoto, H.:
\newblock On the finite difference approximation for a parabolic blow-up problem.
\newblock Japan J. Indust. Appl. Math. 24, 131--160 (2007).

\bibitem{dl00}
Deng, K.,~ Levine, H.A.:
\newblock The role of critical exponents in blow-up theorems: The sequel.
\newblock J. Math. Anal. Appl. 243(1), 85--126 (2000).

\bibitem{et84}
Eriksson, K., Thom\'{e}e, V.:
\newblock Galerkin methods for singular boundary value problems in one space
  dimension.
\newblock Math. Comp. 42(166), 345--367 (1984).

\bibitem{fuj66}
Fujita, H.:
\newblock On the blowing up of solutions of the {C}auchy problem for
  {$u_{t}=\Delta u+u^{1+\alpha }$}.
\newblock J. Fac. Sci. Univ. Tokyo Sect. I 13, 109--124 (1966).

\bibitem{ish10}
Ishiwata, M.:
\newblock On the asymptotic behavior of unbounded radial solutions for
  semilinear parabolic problems involving critical {S}obolev exponent.
\newblock J. Differential Equations 249(6), 1466--1482 (2010).

\bibitem{jes78}
Jespersen, D.:
\newblock Ritz--{G}alerkin methods for singular boundary value problems.
\newblock SIAM J. Numer. Anal. 15(4), 813--834, (1978).

\bibitem{lev90}
Levine, H.~A.:
\newblock The role of critical exponents in blowup theorems.
\newblock SIAM Rev. 32(2), 262--288 (1990).

\bibitem{nak76}
Nakagawa, T.:
\newblock Blowing up of a finite difference solution to 
  {$u_{t}=u_{xx}+u^{2}$}.
\newblock Appl. Math. Optim. 2, 337--350 (1976).

\bibitem{sai16w}
Saito, N.,~ Sasaki, T.:
\newblock  Blow-up of finite-difference solutions to nonlinear wave equations.
\newblock J. Math. Sci. Univ. Tokyo 23(1), 349--380 (2016).

\bibitem{sai16}
Saito, N. and Sasaki T.:
\newblock  Finite difference approximation for nonlinear Schr{\"{o}}dinger equations
with application to blow-up computation.
\newblock Japan J. Indust. Appl. Math. 33, 427--470 (2016).

\bibitem{tho06}
Thom{\'e}e, V.:
\newblock {G}alerkin finite element methods for parabolic problems, Second edition.
\newblock Springer Verlag, Berlin (2006).

\end{thebibliography}
\end{document}